\DeclareMathOperator*{\essinf}{\operatorname{essinf}}
\newcommand{\ninseps}[3]{
\begin{figure}[h]
\begin{center}
 \scalebox{#3}{\includegraphics{#1}}
\end{center}
\caption{\hspace{0.25cm}#2\label{f:#1}}
\end{figure}
}
\newtheorem{theorem}{Theorem}
\newtheorem{lemma}{Lemma}
\newtheorem{corollary}{Corollary}
\newtheorem{remark}{Remark}
\newtheorem{proposition}{Proposition}
\newcommand{\Ymin}{Y^{\min}}
\newcommand{\Zmin}{Z^{\min}}
\title{Backward Stochastic Differential Equations with Nonmarkovian Singular Terminal Values}
\author{
Ali Devin Sezer \thanks{Middle East Technical University, Institute of Applied Mathematics, Ankara, Turkey, {\tt devin@metu.edu.tr}
}
, Thomas Kruse \thanks{University of Duisburg-Essen, Thea-Leymann-Str. 9, 45127 Essen, Germany,
e-mail: {\tt thomas.kruse@uni-due.de}
}
, Alexandre Popier \thanks{Laboratoire Manceau de Math\'ematiques, Universit\'e du
Maine, Avenue Olivier Messiaen, 72085 Le Mans, Cedex 9, France.
e-mail: {\tt alexandre.popier@univ-lemans.fr}
}
}
\date{\today}
\begin{document}
\maketitle

\begin{abstract}
We solve a class of BSDE with a power function $f(y) = y^q$, $q > 1$,
driving its drift
and
with the terminal boundary condition
$ \xi = \infty \cdot \mathbf{1}_{B(m,r)^c}$ (for which $q > 2$ is assumed) or
$ \xi = \infty \cdot \mathbf{1}_{B(m,r)}$,
where $B(m,r)$ is the ball in the path space $C([0,T])$
of the underlying Brownian motion centered
at the constant function $m$ and radius $r$.
The solution involves the
derivation and solution of a related heat equation in which $f$
serves as a reaction term
and which is accompanied by
singular and discontinuous Dirichlet boundary conditions. 
Although the solution
of the heat equation is discontinuous at the corners of the domain the BSDE
has continuous sample paths with the prescribed terminal value.
\end{abstract}

\vspace{0.5cm}
\noindent {\bf AMS 2010 class:} 35K57, 35K67, 60G40, 60H30, 60H99, 60J65 \\
\noindent {\bf Keywords:} Backward stochastic differential equations / Reaction-diffusion equations / Singularity /
Non-Markovian terminal conditions

\section{Introduction}
One of the first points emphasized in an introductory
ordinary differential equations (ODE)
course is that the solution of an ODE
may explode in finite time;
the equation
\begin{equation}\label{e:ode0}
\frac{dy}{dt} = y^q,
\end{equation}
with $q > 1$ serves 
 as the primary example.
Indeed, specify the terminal value $y_T = \infty$
 to \eqref{e:ode0} and
\begin{equation}\label{e:trivialsol}
y_t \doteq ((q-1)(T-t))^{1-p},~~~ t < T,~~~ 1/p + 1/q = 1,
\end{equation}
will be the corresponding unique solution of \eqref{e:ode0}
($p$ is the  H\"older conjugate of $q$).
Now let $W$ be a standard Brownian motion and $\{{\mathscr F}_t\}$ be its natural filtration. For a terminal condition
$\xi \in {\mathscr F}_T$,
one can think of the backward stochastic differential equation (BSDE)
\begin{align}
Y_s &= Y_t + \int_s^t f(Y_r) dr + \int_s^t Z_r dW_r, 
~~0 < s < t < T, \label{e:SDE}\\
Y_T &= \xi, \label{e:terminalcondition}
\end{align}
$f(y) = -y|y|^{q-1}$, 
$Y$ {\em continuous}\footnote{
The requirement that $Y$ be
continuous on $[0,T]$ is natural- otherwise one could solve the SDE
\eqref{e:SDE}
arbitrarily  on $[0,T)$ and set $Y_T = \xi$; thus
the terminal condition $\xi$ would have no bearing on the
behavior of $(Y,Z)$ on $[0,T).$} on $[0,T]$, 
as a stochastic generalization / perturbation of the ODE\eqref{e:ode0}
because
for $\xi=\infty$ identically, one can set $Z_t = 0$ and
reduce \eqref{e:SDE} to \eqref{e:ode0} for which
$Y_t = y_t$ is the unique solution.
But $\xi$ is a random variable and
can also be chosen
equal $\infty$ over a measurable set $A \in {\mathscr F}_T$
and a finite random variable over $A^c.$
Can one solve the BSDE (\ref{e:SDE},\ref{e:terminalcondition})
with such terminal conditions? 
An analysis of this and related questions 
began with the article \cite{popier2006backward}, where $W$ is
assumed to be $d$-dimensional. 
\cite{popier2006backward}
 proved in particular that there exists
a pair of processes $(\Ymin,\Zmin)$ adapted to the filtration ${\mathscr F}_t$ satisfying \eqref{e:SDE}
and where $\Ymin$ satisfies almost surely (a.s.)
\begin{equation}\label{e:upperbound}
\lim_{t\rightarrow T} \Ymin_t  \ge \xi = \Ymin_T.
\end{equation}
In other words, the process $\Ymin$ is a continuous process on $[0,T)$, whose left-limit as $t$ goes to $T$ exists a.s. and dominates the terminal condition $\xi=\Ymin_T$. Moreover, 
$\Ymin$ of
\cite{popier2006backward} 
 is minimal: 
for any other pair $(\hat{Y}, \hat{Z})$ satisfying \eqref{e:SDE} and
\begin{equation}\label{e:super}
\liminf_{t\to T} \hat{Y}_t \geq \xi,
\end{equation}
one has
\begin{equation}\label{e:minprop}
\Ymin_t \leq \hat{Y}_t, \text{ a.s.}, t\in [0,T].
\end{equation}
Following \cite{krus:popi:16} 
we will refer to any pair satisfying \eqref{e:SDE} and \eqref{e:super} as a super-solution
of the BSDE (\ref{e:SDE},\ref{e:terminalcondition}).
Thus $(\Ymin,\Zmin)$ is the minimal super-solution of the BSDE (\ref{e:SDE},\ref{e:terminalcondition}). 
To strengthen \eqref{e:upperbound} to the a.s. equality 
\begin{equation}\label{e:cont_terminalcondition}
\lim_{t\rightarrow T} \Ymin_t = \Ymin_T =  \xi
\end{equation}
and hence solving
the BSDE (\ref{e:SDE},\ref{e:terminalcondition})
for general $\xi \in {\mathscr F}_T$ 
turns out to be a difficult problem.  
The article \cite{popier2006backward} proved \eqref{e:cont_terminalcondition}
for $\xi$ of the form $\xi = g(W_T)$, 
where the function
$g :{\mathbb R} \mapsto {\mathbb R}_+ \cup \{\infty\}$ satisfies $\{ g= \infty\}$ is closed and for any compact subset $K$ of $\{g < \infty\}$, ${\mathbb E}[g(W_T) \mathbf{1}_{K}(W_T)]< \infty.$
Because $\xi$ is a deterministic function of $W$, such terminal conditions are referred to as ``Markovian''. 
To the best of our knowledge, to delineate the class of $\xi \in {\mathscr F}_T$ for which the
BSDE (\ref{e:SDE},\ref{e:terminalcondition}) has a solution $Y$ on $[0,T]$ still remains
an open problem. 

\vspace{0.5cm}
The {\it goal of the present work} is to construct solutions
to the BSDE (\ref{e:SDE},\ref{e:terminalcondition}) 
for a class of non-Markovian final conditions $\xi \in {\mathscr F}_T$;
we will also prove that the solutions we construct are equal to the
minimal supersolutions $(\Ymin, \Zmin)$ of \cite{popier2006backward},
which will imply that \eqref{e:cont_terminalcondition} holds for
the terminal conditions we treat.
To the best of our knowledge, the present work is the first to derive
these types of results for non-Markovian singular terminal conditions.
The class of $\xi$ which we will focus on is best
explained using the canonical path space
$\Omega \doteq C([0,T ] , \mathbb{R})$,
the set of all $\mathbb{R}$-valued continuous paths 
$\omega$ on $[0,T]$, equipped with its sup norm 
\[
||\omega||_{\infty} \doteq \sup_{t\in[0,T]} |\omega(t)|,
\]
and a family of Wiener measures 
$\{ {\mathbb P}^x$, $x \in {\mathbb R}\}$,
under which
the canonical process $W_t(\omega) = \omega(t)$ is a standard Brownian
motion with initial condition $W_0 = x.$
As before $\mathbb{F} = ({\mathscr F}_t)_{0\leq t\leq T}$ is the canonical filtration generated by $W$.
Then 
${\mathscr F}_T$ is the 
the Borel field of $\Omega$ corresponding to
the sup norm $||\cdot||_\infty$ and the 
basic ${\mathscr F}_T$-measurable 
random variables are the indicator
functions of open /closed subsets of $C([0,T])$. The open and closed 
subset of $C([0,T])$
are generated by balls with respect to the norm $||\cdot||_\infty$ and the
simplest balls in turn are those centered around constant functions.
Thus we arrive at the class of terminal values we would like to cover:
\begin{equation}\label{e:newterm}
\xi_1 = \infty \cdot \mathbf{1}_{B(m,r)^c} \quad \mbox{or} \quad \xi_2 = \infty \cdot \mathbf{1}_{B(m,r)}
\end{equation}
where $B(m,r)$ is the ball $\{ \omega: ||\omega -m||_\infty \le r \}$, for some $m \in {\mathbb R}$ and $r> 0$. 
To simplify notation we will assume throughout that 
$m = r= L/2$ for some $L > 0$
for which the expressions for $\xi$ in \eqref{e:newterm} become
$\xi_1 = \infty \cdot \mathbf{1}_{B(L/2,L/2)^c}$ and
$\xi_2 = \infty \cdot \mathbf{1}_{B(L/2,L/2)}$;
all of what follows trivially extends
to arbitrary $m \in {\mathbb R}$ and $r > 0$.

The Markovian terminal 
conditions provide (via It\^o's formula) 
the connection between
BSDE and a class of semilinear / quasilinear parabolic PDE
\cite{pardoux1992backward}. 
In the case of singular terminal conditions of the type $g(X_T)$ where
$g$ can take the value $+\infty$, the associated
parabolic PDE is coupled with
singular boundary conditions; a considerable number of articles appeared over the last several
decades
(see \cite{bara:pier:84,brez:frie:83,marcus1999initial,dynk:kuzn:98,lega:96} and the references therein) studying the PDE 
\begin{equation}\label{eq:gene_PDE}
V_t - \Delta V + V^q = 0,
\end{equation}
where $\Delta$ denotes the Laplace operator, allowing for singular terminal values. 
The same PDE \eqref{eq:gene_PDE} is directly related to the BSDE (\ref{e:SDE},\ref{e:terminalcondition}) and will play a key role in our analysis below. 
See \cite[Section 4]{popier2006backward} for more 
on the link between the BSDE (\ref{e:SDE},\ref{e:terminalcondition}) and the PDE \eqref{eq:gene_PDE}. 
\vspace{0.5cm}

The main idea of the present paper 
for the solution of the BSDE (\ref{e:SDE},\ref{e:terminalcondition}) for $\xi$ of the form \eqref{e:newterm} is 
to reduce the question to a Markovian problem in the random time interval
$[0,\tau \wedge T]$ where
$\tau \doteq \{ t\in [0,\infty): W_t \in \{0,L\}\}$.
For $\tau < T$, the terminal conditions given in \eqref{e:newterm} 
reduce to constants
\[
\xi_1(\omega) =\infty \cdot \mathbf{1}_{B(m,r)^c} = \infty,~~
\xi_2(\omega)  =\infty \cdot \mathbf{1}_{B(m,r)} = 0,
\]
and the SDE \eqref{e:SDE} reduces to the ODE \eqref{e:ode0} on $(\tau ,T]$.
Solving it  on $(\tau,T]$ with the terminal condition $\xi_1(\omega) = \infty$ 
gives the solution 
\[
Y_t^1 = y_t, ~Z_t^1 =0,~~~ t \in (\tau,T], 
\]
of the BSDE (\ref{e:SDE},\ref{e:terminalcondition}) on $(\tau, T]$ for
$\xi=\xi_1.$ Similarly, solving the same ODE on the same time interval
with 
the terminal condition $\xi_2(\omega) = 0$ gives the solution
\[
Y_t^2 = 0,~ Z_t^2=0,~~~ t \in (\tau,T],
\]
of the same BSDE for $\xi=\xi_2.$
These then give the value of the solutions $Y^i$ at time $\tau < T$:
\begin{equation}\label{e:intermidateterminal1}
Y^1_\tau = y_\tau, Y^2_\tau = 0.
\end{equation}
On  the set $T < \tau$, the terminal conditions $\xi_1$ and $\xi_2$
reduce to
\begin{equation}\label{e:intermidateterminal2}
\xi_1(\omega) =0, ~~
\xi_2(\omega)  = \infty.
\end{equation}
Next we solve the same BSDE in the time
interval $[0, T\ \wedge \tau]$ 
using \eqref{e:intermidateterminal1} 
and \eqref{e:intermidateterminal2} 
as terminal
conditions. Thus our BSDE is reduced to one with a Markovian
terminal condition at the random terminal time $\tau \wedge T$. 
Now It\^o's formula
provides the connection between the solution of the reduced BSDE to the solution of the
parabolic equation
\begin{equation}\label{e:PDEfirst}
\partial_t V + \frac{1}{2}\partial^2_{xx} V - V^q = 0;
\end{equation}
\eqref{e:intermidateterminal1} 
and
\eqref{e:intermidateterminal2} 
suggest the following boundary conditions to accompany the PDE:
\begin{equation}\label{e:boundary1}
V(0,t)  = V(L,t) = y_t, t \in [0,T],~~
V(x,T) = 0, 0  < x < L
\end{equation}
for $\xi_1$ and 
\begin{equation}\label{e:boundary2}
V(0,t)  = V(L,t) = 0, t \in [0,T],~~
V(x,T) = \infty, 0  < x < L
\end{equation}
for $\xi_2.$ 
Proposition \ref{p:BSDEsol} of Section \ref{s:first_case}
gives the details of the above reduction.

With these steps our problem is reduced to the solution of the PDE
\eqref{e:PDEfirst} and the boundary condition \eqref{e:boundary1} for
$\xi_1$ and the boundary condition \eqref{e:boundary2} for $\xi_2.$
The main difficulty with the solution of these equations are the 
discontinuous (at the corners $(0,T),(L,T) \in {\mathbb R}^2$)
and infinite valued boundary conditions.
The most relevant
work that we have identified in the literature on
the solution of \eqref{e:PDEfirst} and the boundary conditions
\eqref{e:boundary1} and \eqref{e:boundary2} 
is \cite{marcus1999initial},
which contains results giving the existence of  weak solutions to the 
PDE \eqref{e:PDEfirst} in $d$ space dimensions when coupled with boundary
conditions which are allowed to take the value $+\infty.$
However, 
these results occur in
\cite{marcus1999initial} in the context of the computation of
initial traces and
within a general framework where boundary conditions
and solutions are specified in a weak Sobolev-sense;
to treat these questions the authors of \cite{marcus1999initial} 
use PDE and analysis 
results developed by them over a number of works.
We think that one can build an argument starting from results in
\cite{marcus1999initial}
to get a classical solution to 
(\ref{e:PDEfirst},\ref{e:boundary1})
and
(\ref{e:PDEfirst},\ref{e:boundary2})
 having the regularity and the boundary continuity
properties needed for our purposes but this appears to be a nontrivial task.
In this paper, we follow a different route
and give a new self contained construction 
of classical solutions of 
(\ref{e:PDEfirst},\ref{e:boundary1}) 
and
(\ref{e:PDEfirst},\ref{e:boundary2}) 
starting
from classical parabolic PDE theory with smooth boundary conditions 
\cite{friedman} and building on it
using smooth approximation from below of the boundary conditions 
and elementary probabilistic techniques.

Once the solution of the BSDE is built as above, the last step
is to
connect them with the corresponding minimal supersolution
$(\Ymin,\Zmin)$;
this is achieved by an argument using the approximating sequence
of functions constructed in the solution of the PDE.

One change in the application of the above steps
to the terminal conditions $\xi_1$ and $\xi_2$ is the assumption
we make on $q$: for $\xi_1 $ we need $q > 2$ whereas $q>1$ suffices
for $\xi_2$. This is coupled with the following change in the argument:
for $q > 2$, the classical heat equation $\partial_t V + \frac{1}{2}
\partial^2_{xx} V = 0$ also has a classical solution $v_0$ with the boundary
condition \eqref{e:boundary1}. In the treatment of $\xi_1$ we use $v_0$
as an upper bound in constructing an approximating sequence for the
solution of \eqref{e:PDEfirst} and \eqref{e:boundary1}, which
ensures the continuity of the limit of the approximation at the
boundaries. For $\xi_2$ the
corresponding boundary condition is \eqref{e:boundary2}, for which $v_0$
doesn't exist (regardless of the value of $q$) 
but we are able to construct an upperbound directly working with
the PDE \eqref{e:PDEfirst} and the boundary condition \eqref{e:boundary2}
and for this $q >1$ suffices. Other than this, the arguments for
$\xi_1$ and $\xi_2$
are the same. To reduce repetition and shorten the paper we give them
in detail for the first case in Section \ref{s:first_case}, the necessary
changes for $\xi_2$ are given in Section \ref{s:secondtermcond}.
The results of these sections are summarily given in Theorems
\ref{thm:first_case} (Section \ref{s:first_case}) and 
\ref{thm:second_case} (Section \ref{s:secondtermcond}).
Both of these sections present numerical examples (graphs of functions
and example sample paths) of the 
constructed solutions of the BSDE and those of the associated PDE.

We would like to note a connection between our results
and the BSDE theory with $L^p$ terminal conditions.
The assumption $q >2$ for $\xi_1$ implies that,
with the above reduction
of the BSDE (\ref{e:SDE},\ref{e:terminalcondition}) to the
random time interval $[0,\tau\wedge T]$, 
the reduced terminal condition will be in $L^1$; thus
one can also invoke
the existence results of \cite{bria:dely:hu:03} to
construct a solution for the terminal condition $\xi_1$.
The reduction to the time interval $[0,\tau\wedge T]$ doesn't lead to
an  $L^1$ terminal condition for $\xi = \xi_2$;
the PDE approach above applies to both $\xi_1$ and $\xi_2.$

A well known fact in the prior literature (see, e.g.,\cite{krus:popi:16}) is the link between
the BSDE (\ref{e:SDE},\ref{e:terminalcondition}) and the following stochastic 
optimal control problem:  
the controlled process $C$ is $C_s = c + \int_s^t \alpha_s ds$, the 
running cost is $|\alpha|^p$ and the terminal cost is $|C_T|^{p} \xi$, where $0 \cdot \infty = 0$. 
The random variable $\xi$ is a penalty on the terminal value of $C$; in particular the controlled process is 
constrained to satisfy $C_T = 0$ if $\xi = +\infty$. A growing number of articles study variants 
and generalizations of this control problem (with $\xi= \infty$ identically)  with applications to liquidation of 
portfolios of assets, see \cite{ankirchner2014bsdes, graewe2013smooth,graewe2015non,krus:popi:16}. 
The value function 
$v$ of the control problem is given by the minimal solution $\Ymin$: $v(t,x) = |x|^p \Ymin_t$ \cite{krus:popi:16}.
Therefore, our results in Section \ref{s:first_case} and \ref{s:secondtermcond} give explicit expressions
for the value function of this control problem for 
$\xi = \infty \cdot \mathbf{1}_{B(m,r)}$ and
$\xi = \infty \cdot \mathbf{1}_{B(m,r)^c}$.
Section \ref{s:control} uses this connection to derive estimates on the
conditional probabilities ${\mathbb P}(B(m,r) | {\mathscr F_t})$ and
${\mathbb P}(B(m,r)^c | {\mathscr F_t})$, $ t \in [0,T).$

Let us point out further prior literature on the solution of the BSDE (\ref{e:SDE},\ref{e:terminalcondition}):
\cite{popier2006backward} considers the case where $\xi$ is a function $g(X_T)$ where $X$ is the solution of a forward SDE 
\[
X_t  = x + \int_0^t b(s,X_s)dr  + \int_0^t \sigma(s,X_s) dW_s;
\]
(for the assumptions on $b$, $\sigma$ and $q$ we refer the reader to \cite{popier2006backward}). Since then, two works \cite{mato:pioz:popi:16,popier2016limit} appeared treating the BSDE (\ref{e:SDE},\ref{e:terminalcondition})
both focusing on $\xi$ of the form $g(X_T)$. The work \cite{mato:pioz:popi:16} extends the results of \cite{popier2006backward} to the class of backward doubly stochastic SDE (BDSDE in short). 
The article \cite{mato:pioz:popi:16} proves under these models that a minimal super-solution $(\Ymin,\Zmin)$ exists which is also continuous at the terminal time $T$ with $\Ymin_T = \xi = g(X_T)$. 
The work \cite{popier2016limit} also considers the BSDE 
with three additional extensions a) there are an additional jump term given 
by a Poisson random measure; b) the drift term $f(Y_s) = Y_s^q$ 
in \eqref{e:SDE} is replaced with a general $f$ satisfying a number of 
conditions which includes as a special case the function $y\rightarrow y^q$ 
and c) it works with a general complete right continuous filtration to which 
all of the given processes are adapted (as in \cite{krus:popi:16}); 
\cite{popier2016limit} proves that under these model assumptions that the 
minimal super-solution $\Ymin$ to the BSDE is continuous at the terminal time 
with $\Ymin_T = \xi = g(X_T)$ (in \cite{popier2016limit} jump terms are 
also allowed in the dynamics of $X$). Note that existence and minimality 
of $(\Ymin,\Zmin,U^{\min},M^{\min})$ were proved already in 
\cite{krus:popi:16} (the terms $U^{\min}$ and $M^{\min}$ come from the 
Poisson measure and the general filtration).
A recent work treating integro-partial differential generalizations of 
\eqref{eq:gene_PDE} with singular terminal conditions is 
\cite{popier2016integro}, which contains many further references and 
a literature review on parabolic PDE with singular boundary conditions, 
their connections to BSDE and their probabilistic solutions.

We indicate several directions for future research in the Conclusion.
\section{A first non-Markovian case} \label{s:first_case}

{This section implements 
for the terminal condition
$\xi =\xi_1 = \infty \cdot \mathbf{1}_{B(m,r)^c}$
the argument whose outline was given
in the introduction.}
We will denote by $D$ the domain $(0,L) \times (0,T)$. 
For $x \not\in (0,L)$, 
$\mathbb{P}^x(\xi = +\infty) = 1$ or $\mathbb{P}^x(\xi=0)=1$
and the problem becomes trivial for such $x$ (the same comment applies
to the terminal condition 
$\xi = \infty \cdot \mathbf{1}_{B(L/2,L/2)}$ as well).
Therefore, will assume the initial condition $x$
to satisfy $x \in (0,L)$; none of the arguments of the present work
depend on the initial point $W_0 = x$
beyond this consideration, thus for ease of notation we will
simply write ${\mathbb P}$ for ${\mathbb P}^x$ and always assume
$x \in (0,L).$ We summarize the results of this section
in the following Theorem.

\begin{theorem} \label{thm:first_case}
If $q>2$ then there is a function $u$ which is $C^\infty$ in the $x$ variables and $C^1$ in the $t$ variable and continuous on 
$\bar{D} \setminus \{(L,T),(0,T)\}$ satisfying the PDE \eqref{e:PDEfirst} with the 
boundary condition \eqref{e:boundary1}
such that
\begin{enumerate}
\item
\begin{equation}\label{e:defYZ}
Y_t = \begin{cases} u(W_t,t)&, t < \tau \wedge T,\\
			 y_t&, \tau \le t \le T,
	  \end{cases}~~~
Z_t = \begin{cases} u_x(W_t,t)&, t < \tau \wedge T,\\
			 0&, \tau \le t \le T.
	  \end{cases}
\end{equation}
solve the BSDE (\ref{e:SDE}, \ref{e:terminalcondition}) 
with
$\xi = \xi_1 = \infty \cdot \mathbf{1}_{B(L/2,L/2)^c}$; in particular,
$Y$ is continuous on $[0,T]$,
\item 
We have 
$(\Ymin,\Zmin)= (Y,Z)$; in particular
\eqref{e:cont_terminalcondition} holds.
\end{enumerate}
\end{theorem}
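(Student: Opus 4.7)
The strategy is to construct $u$ as the monotone limit of classical solutions of \eqref{e:PDEfirst} with smoothed boundary data, trap it from above by a linear-heat majorant to recover boundary continuity, then apply It\^o's formula together with an approximation argument to pass from PDE to BSDE and to identify the result with the minimal supersolution.

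For the PDE, I would approximate the singular data \eqref{e:boundary1} from below by a sequence of smooth nonnegative data $g_n$ on the parabolic boundary $\partial D \setminus \{(0,T),(L,T)\}$ satisfying $g_n(0,t) = g_n(L,t) = y_t \wedge n$ and $g_n(x,T) = 0$, suitably smoothed near the corners so that classical parabolic theory (\cite{friedman}) produces a unique $u_n \in C^{2,1}(D) \cap C(\overline{D})$ solving \eqref{e:PDEfirst} with boundary data $g_n$. Because $V \mapsto V^q$ is monotone on $\mathbb{R}_+$, the comparison principle for the semilinear equation yields $u_n \le u_{n+1}$, so $u := \lim_n u_n$ is well-defined. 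The hypothesis $q>2$ is then used to guarantee that the same singular lateral data $y_t = ((q-1)(T-t))^{1-p}$ is integrable in $t$ (since $1-p>-1$) and hence that the linear heat equation $\partial_t v_0 + \tfrac12 \partial^2_{xx} v_0 = 0$ with boundary data \eqref{e:boundary1} admits a classical solution $v_0$ that is continuous on $\overline{D}\setminus\{(0,T),(L,T)\}$. Since $v_0 \geq 0$, it is a supersolution of \eqref{e:PDEfirst}, and comparison gives $u_n \le v_0$. Interior Schauder estimates applied along the uniformly bounded sequence $\{u_n\}$ then upgrade the limit $u$ to the claimed interior regularity, while the sandwich $u_n \le u \le v_0$ on $\overline{D}\setminus\{(0,T),(L,T)\}$ forces $u$ to attain the prescribed values continuously there.

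For the BSDE, It\^o's formula applied to $u(W_t, t)$ on any interval $[0, (\tau\wedge T)-\varepsilon]$ together with \eqref{e:PDEfirst} shows that $(Y,Z)$ defined by \eqref{e:defYZ} satisfies \eqref{e:SDE} on $[0,\tau\wedge T)$; on $[\tau\wedge T, T]$ the definition matches the ODE solution \eqref{e:trivialsol}. Continuity of $u$ on the lateral boundary gives $Y_{\tau-}=y_\tau$ on $\{\tau<T\}$, matching the ODE value there; continuity of $u$ at interior terminal points $(x,T)$ with $0<x<L$, combined with path continuity of $W$, yields $\lim_{t\uparrow T} u(W_t,t)=0=\xi_1(\omega)$ on $\{\tau\ge T\}$, while on $\{\tau<T\}$ the explicit ODE expression diverges, giving $Y_T=\infty=\xi_1(\omega)$. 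Hence $(Y,Z)$ is a super-solution, and the minimality property \eqref{e:minprop} gives $\Ymin \le Y$.

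For the reverse inequality, I would reuse the approximation: by Proposition~\ref{p:BSDEsol} the BSDE reduces to the Markovian random horizon $\tau\wedge T$, where for $q>2$ the reduced terminal datum $y_\tau \mathbf{1}_{\{\tau<T\}}$ lies in $L^1$. The truncated terminal $y_\tau\wedge n$ produces a classical bounded-terminal BSDE whose solution $Y^n$ represents, via It\^o, as $u_n(W_\cdot,\cdot)$. By the construction of $\Ymin$ in \cite{popier2006backward} as the monotone limit of truncations, $Y^n_t \uparrow \Ymin_t$; by monotone convergence of $u_n\uparrow u$, also $Y^n_t \uparrow Y_t$; hence $\Ymin_t = Y_t$ and in particular \eqref{e:cont_terminalcondition} holds. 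The main obstacle I expect is securing the boundary continuity of the monotone limit $u$ along the lateral sides all the way up to $t=T$: this is precisely where the auxiliary linear majorant $v_0$ is indispensable, and where the assumption $q>2$ enters in a critical way.
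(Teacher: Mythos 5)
Your PDE construction (monotone approximation from below, the linear--heat majorant $v_0$ whose existence is exactly where $q>2$ enters, interior Schauder estimates, and the sandwich $u_n\le u\le v_0$ for boundary continuity) and your passage from the PDE to the BSDE via It\^o's formula follow the paper's route, and the direction $\Ymin\le Y$ via the minimality property \eqref{e:minprop} is fine. The genuine gap is in the reverse inequality $Y\le \Ymin$. Your justification --- ``by the construction of $\Ymin$ in \cite{popier2006backward} as the monotone limit of truncations, $Y^n_t\uparrow\Ymin_t$'' --- assumes what has to be proved. The approximating objects in \cite{popier2006backward} are solutions $\tilde Y^m$ of the BSDE on all of $[0,T]$ with terminal condition $\xi\wedge m=m\,\mathbf{1}_{B(L/2,L/2)^c}$, whereas your $Y^n$ solves a different problem (random horizon $\tau\wedge T$, terminal datum $(y_\tau\wedge n)\mathbf{1}_{\{\tau<T\}}$); identifying the two limits \emph{is} the content of the claim $Y=\Ymin$. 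Moreover, with your truncation-in-value approximation no term-by-term comparison can bridge the two families. Indeed, a.s.\ on $\{\tau<T\}$ one has $\mathbf{1}_{B(L/2,L/2)^c}=1$, so conditionally on ${\mathscr F}_\tau$ the terminal datum of $\tilde Y^m$ is the constant $m$, whence
\begin{equation*}
\tilde Y^m_\tau=\bigl((q-1)(T-\tau)+m^{1-q}\bigr)^{-\frac{1}{q-1}}<y_\tau\qquad\text{for every }m;
\end{equation*}
on the positive-probability event $\{y_\tau\le n\}\cap\{\tau<T\}$ your terminal datum equals $y_\tau$ and strictly dominates $\tilde Y^m_\tau$ for \emph{all} $m$, so comparison on $[0,\tau\wedge T]$ can never yield $Y^n\le\tilde Y^m$, hence not $Y^n\le\Ymin$ either.

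The alternative of extending $Y^n$ past $\tau$ and comparing terminal values at $T$ fails for the same reason: since $y_t\wedge n$ does not solve the ODE \eqref{e:ode0}, the natural extension (run \eqref{e:ode0} forward from the value $y_\tau\wedge n$ at time $\tau$) coincides with $y_t$ on $\{y_\tau\le n\}$ and therefore explodes exactly at $T$; the extended process then has terminal value $+\infty$ on a set of positive probability, so it is not a bounded-terminal solution and cannot be placed below any $\xi\wedge m$. This is precisely why the paper approximates by the \emph{time-shift} $y^{(n)}_t=y_{t-1/n}$ rather than by truncation: $y^{(n)}$ is itself a solution of \eqref{e:ode0}, so the corresponding $Y^n$ extends past $\tau$ to a genuine solution of \eqref{e:SDE} on $[0,T]$ with the \emph{bounded} terminal condition $\xi^n=y_{T-1/n}\mathbf{1}_{B(L/2,L/2)^c}\le\xi\wedge m$ for $m$ large; the standard comparison principle for monotone drivers with bounded terminal data then gives $Y^n\le\tilde Y^m\le\Ymin$, and letting $n\to\infty$ gives $Y\le\Ymin$ (this is Proposition \ref{p:YeqYmin}). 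To repair your proof, replace the truncation $y_t\wedge n$ by the time-shifted data $y_{t-1/n}$ in the identification step (the PDE existence part can keep either approximation, since by your uniqueness statement both produce the same $u$).
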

\begin{proof}
Proposition \ref{p:BSDEsol} of subsection \ref{ssect:reduc}
proves that
given any classical solution $u$ of \eqref{e:PDEfirst} and
the boundary condition \eqref{e:boundary1}, the processes
$(Y,Z)$ defined as in \eqref{e:defYZ} satisfy the BSDE (\ref{e:SDE},\ref{e:terminalcondition}) and the $Y$ process is continuous on $[0,T]$.
Proposition \ref{p:solvePDE} of subsection \ref{ssect:solvepde} constructs
a classical solution $u$ of \eqref{e:PDEfirst} and the boundary
condition \eqref{e:boundary1}. Finally,  Proposition \ref{p:YeqYmin} proves $Y = \Ymin$
for the $u$ constructed in Proposition \ref{p:solvePDE},
which implies in particular that, for $\xi = \xi_1$, \eqref{e:cont_terminalcondition} holds.
\end{proof}

\begin{remark}{\em
As pointed out in the introduction,
the connection between the BSDE (\ref{e:SDE},\ref{e:terminalcondition})
and the PDE \eqref{e:PDEfirst} is well known 
for Markovian terminal conditions.
The above result says that the same connection continues to hold
when ones uses the non-Markovian $\xi_1$
as terminal condition for the BSDE.
}
 \end{remark}

{We give several numerical examples and simulation of our results in subsection \ref{ss:numerical}.}

\subsection{Reduction to heat equation with reaction} \label{ssect:reduc}

As outlined in the introduction, our approach to solving the BSDE (\ref{e:SDE},\ref{e:terminalcondition})
$\xi = \infty \cdot \mathbf{1}_{B(L/2,L/2)^c}$
will be by breaking the problem into
two random time intervals $[0,\tau \wedge T)$ and
$(\tau \wedge T, T]$; on the latter the problem reduces to the trivial \eqref{e:ode0} with
the terminal value $y_T = \infty.$ The value of the unique solution $y_\tau$ at $\tau$ then provides
the terminal condition over the interval 
 $[0,\tau \wedge T)$; thus we end up with a Markovian problem and can attack it via the associated PDE.
These are the main ideas underlying the next proposition.
\begin{proposition}\label{p:BSDEsol}
Suppose $u:{\bar D} \rightarrow {\mathbb R}$ is
 $C^\infty$ in the $x$ variable and continuously differentiable in the $t$ variable over $D$,
continuous on $\bar{D} \setminus \{(L,T),(0,T)\}$
and satisfies the PDE \eqref{e:PDEfirst} and the boundary condition \eqref{e:boundary1} in the classical sense.
Then the pair $(Y,Z)$ of \eqref{e:defYZ} satisfies the BSDE {\rm (\ref{e:SDE}, \ref{e:terminalcondition})} and is continuous on $[0,T]$.
\end{proposition}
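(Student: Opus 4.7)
The plan is to verify the BSDE \eqref{e:SDE} on the two random sub-intervals $[0,\tau\wedge T)$ and $[\tau\wedge T,T)$ separately, where $\tau\doteq\inf\{t\ge 0:W_t\in\{0,L\}\}$, then to paste the two identities at $\tau\wedge T$ using the Dirichlet condition \eqref{e:boundary1}, and finally to verify continuity of $Y$ and the terminal condition $Y_T=\xi_1$ at $T$. Since $\mathbb{P}(\tau=T)=0$ we may argue on the disjoint union $\{\tau<T\}\cup\{\tau>T\}$.

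On the deterministic segment $[\tau\wedge T,T]$, which is non-trivial only on $\{\tau<T\}$, differentiating the explicit expression \eqref{e:trivialsol} together with $1/p+1/q=1$ yields $y_t'=y_t^q$; this is exactly the BSDE integrated on $[s,t]\subset[\tau,T)$ with $Z=0$ and $Y=y$. On $[0,\tau\wedge T)$ I would apply It\^o's formula to $u(W_t,t)$ at stopping times of the form $\tau\wedge(T-\varepsilon)$ and then send $\varepsilon\downarrow 0$; the assumed regularity of $u$ is exactly what It\^o requires, and the PDE \eqref{e:PDEfirst} converts the It\^o drift $\partial_t u+\tfrac{1}{2}u_{xx}$ into $u^q=-f(u)$, matching the BSDE drift. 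The two pieces are glued at $\tau$ on $\{\tau<T\}$: the point $(W_\tau,\tau)\in\{0,L\}\times[0,T)$ avoids the corners $\{(0,T),(L,T)\}$, so the assumed continuity of $u$ on $\bar D\setminus\{(0,T),(L,T)\}$ together with $u(0,t)=u(L,t)=y_t$ gives $\lim_{t\uparrow\tau}u(W_t,t)=y_\tau$, which yields continuity of $Y$ at $\tau$ and permits the two BSDE identities to be summed across $\tau$.

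For the endpoint $T$: on $\{\tau>T\}$ the endpoint $W_T\in(0,L)$ lies away from the corners, so continuity of $u$ and the terminal datum $u(\cdot,T)\equiv 0$ on $(0,L)$ give $\lim_{t\uparrow T}Y_t=u(W_T,T)=0$; moreover the sample path lies in $[0,L]$, that is, in $B(L/2,L/2)$, so $\xi_1=0=Y_T$. On $\{\tau<T\}$, the strong Markov property at $\tau$ shows that Brownian motion immediately overshoots $\{0,L\}$ after hitting it, so the path lies in $B(L/2,L/2)^c$ and $\xi_1=\infty$, matching $Y_t=y_t\to\infty$ as $t\uparrow T$.

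The main subtlety I anticipate is precisely this identification of $\{\omega\in B(L/2,L/2)^c\}$ with $\{\tau<T\}$ up to null sets, which needs the short strong Markov argument above, together with the extended-real-valued continuity of $Y$ at $T$ on the event $\{\tau<T\}$. Once the corners of $\bar D$ are verified to be avoided by $(W_\tau,\tau)$ on $\{\tau<T\}$ and by $(W_T,T)$ on $\{\tau>T\}$, the remaining It\^o and ODE bookkeeping is routine.
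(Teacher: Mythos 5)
Your proposal is correct and follows essentially the same route as the paper's proof: split the problem at $\tau\wedge T$, solve the ODE $y'=y^q$ beyond $\tau$, apply It\^o's formula together with the PDE \eqref{e:PDEfirst} before $\tau$, glue the two identities at $\tau$ via the lateral boundary condition $u(0,t)=u(L,t)=y_t$, and verify continuity and the terminal value by cases on $\{\tau<T\}$ and $\{\tau>T\}$. If anything, you are more explicit than the paper on the one point it leaves implicit, namely the a.s.\ identification of $\{\tau<T\}$ with $B(L/2,L/2)^c$ (and of $\{\tau>T\}$ with $B(L/2,L/2)$) via the strong Markov property at $\tau$.
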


\begin{proof}
We begin by proving that $Y$ is continuous on $[0,T].$
First consider the case $\{\tau < T\}.$
By assumption $u$ is continuous on $\bar{D} \setminus \{(L,T),(0,T)\}$. 
Therefore, $u$ is continuous on $[0,L] \times [0, \tau]$,
$[0,\tau] \varsubsetneq [0,T]$.
In addition, $W$ has continuous sample paths. 
Then $t \mapsto u(W_t,t)$ is the composition of two continuous maps on $[0,\tau]$ and therefore is a continuous function on that interval. 
On the other hand, by definition \eqref{e:defYZ}
$Y_t =y_t$  for $t > \tau$; 
and the continuity of $t\mapsto y_t$ on $[\tau,T]$ implies the same for $Y$; 
finally the continuity of $Y$ at $\tau$ follows from the boundary condition \eqref{e:boundary1} and the definition of $Y$ given
in \eqref{e:defYZ}: $u(W_\tau,\tau) = y_\tau = Y_\tau$.
Thus we see that $Y$ is continuous on $[0,T]$ on the set $\{\tau < T\}$. 
The event $\{\tau = T\}$ is of measure zero, thus it only
remains to consider the case $\{ \tau > T \}.$ 
By definition \eqref{e:defYZ}
$Y_t = u(W_t,t)$, $t \in [0,T]$ for $\omega \in \{ \tau > T \}.$
The continuity of the sample path of $W$ and the compactness
of $[0,T]$ imply that  there exists $\delta > 0$ such that
\begin{equation}\label{e:strictinclusion}
W_t(\omega) \in [\delta, L-\delta], t \in [0,T],
\end{equation}
for $\omega \in \{\tau(\omega) > T \}.$ 
By assumption $u$ is continuous
on $[\delta, L -\delta] \times [0,T]$.
Then $t\mapsto Y_t= u(W_t,t)$, $t \in [0,T]$ is the composition of two
continuous functions and hence continuous.
This proves the continuity of $Y$ on $[0,T]$.

By definition 
\begin{align*}
Y_T &= y_T \cdot {\bm 1}_{\{\tau < T\}} + u(W_T,T){\bm 1}_{\{\tau > T\}}\\
    &= \infty \cdot {\bm 1}_{B(L/2,L/2)^c} + u(W_T,T){\bm 1}_{\{\tau > T\}}.
\end{align*}
The fact
\eqref{e:strictinclusion} and that $u$ satisfies \eqref{e:boundary1}
imply $ u(W_T,T){\bm 1}_{\{\tau > T\}} = 0.$ This and the last display imply
$Y_T = \xi_1$, i.e., that $Y$ satisfies the terminal condition \eqref{e:terminalcondition} with $\xi =\xi_1.$

It remains to prove that for fixed $ s < t < T$ \eqref{e:SDE} holds almost surely.
On the set $\{ \tau \le  s \}$, $Y_r = y_r$ and
$Z_r = 0$ for $r  \in [s,t]$ and \eqref{e:SDE} reduces to
\[
y_t = y_s + \int_s^{t} y^q_r dr,
\]
which is equivalent to \eqref{e:ode0} of which $y$ is a solution;
this establishes that \eqref{e:SDE} holds over $\{\tau \le s\}.$
Recall that by assumption, $u$ is smooth in $x$,
continuously differentiable in $t$ in $D$ and continuous on $\overline{D}\setminus
\{(L,T),(0,T)\}.$ In particular, $u$ is continuous on any $[0,L]\times [0,t]$
for $t < T$.
On the set $\{\tau > s\}$ apply It\^o's formula
to $u(W_r,r)$ between $s$ and $\tau \wedge t$ to get
\begin{align*}
 Y_{\tau \wedge t} = 
 Y_s + \int_s^{t \wedge \tau} \partial_x(W_r,r)dW_r
+ \int_s^{t\wedge \tau} \partial_t u(W_r,r)dr +\frac{1}{2}\int_s^{t \wedge \tau}
\partial_{xx} u(W_r,r)dr.
\end{align*} 
That $u$ satisfies \eqref{e:PDEfirst} implies
\begin{align}
 Y_{\tau \wedge t}&=  Y_s + \int_s^{t \wedge \tau} \partial_x u(W_r,r)dW_r
+ \int_s^{t\wedge \tau}  u^q(W_r,r)dr \notag\\
&= 
 Y_s + \int_s^{t \wedge \tau}  Z_r dW_r
+ \int_s^{t\wedge \tau}   Y^q_r dr, \label{e:uptotwedgetaun}
\end{align}
which implies \eqref{e:SDE} for $\{\tau > t\}.$ Finally, for
$\{ \tau \in (s,t) \}$:
\[
 Y_t =  Y_{\tau} + \int_\tau^t  Y_r^q dr.
\] 
Substituting the right side of \eqref{e:uptotwedgetaun} for $ Y_\tau$
in the last display gives
\[
 Y_t = 
 Y_s + \int_s^{t}  Z_r dW_r
+ \int_s^{t}   Y^q_r dr,
\]
where we have used $ Z_r = 0$ for $ r \in (\tau, t)$, which finishes
the proof that $( Y, Z)$ satisfies \eqref{e:SDE}.

\end{proof}

\subsection{Solution of the heat equation with reaction }\label{ssect:solvepde}
This subsection proves the key ingredient of Theorem \ref{thm:first_case}, i.e., 
the existence of a classical solution $u$ of \eqref{e:PDEfirst} and the boundary condition
\eqref{e:boundary1}.
Equation \eqref{e:PDEfirst} is often referred to as a reaction-diffusion equation where $V^q$ is the reaction term 
\cite[Example 1, page 535]{evans}. The main difficulty with \eqref{e:PDEfirst}  and \eqref{e:boundary1} is 
the discontinuity and unboundedness of the boundary condition near the corners $(L,T)$ and $(0,T)$ in ${\mathbb R}^2$. 
The next proposition asserts the existence of $u$ and gives its regularity properties (the function $v_0$ is defined in \eqref{e:defv0}).
Define
\begin{equation}\label{e:defv0}
v_0(x,t) \doteq {\mathbb E}_{x,t}
\left[ y_\tau \mathbf{1}_{\{ \tau < T \}} \right],
\end{equation}
where the subscript $(x,t)$ of the expectation operator denotes
conditioning on $W_t = x$.
The function $v_0$ will play a key role
in our construction of the solution $u$.
\begin{proposition}\label{p:solvePDE}
There is a unique function $ 0 \le u \le v_0$ which is $C^\infty$ in the $x$ variable
and continuously differentiable in the $t$ variable over $D$ and is continuous
on $\bar{D} \setminus \{(L,T),(0,T)\}$ and which solves \eqref{e:PDEfirst} and 
\eqref{e:boundary1}.
\end{proposition}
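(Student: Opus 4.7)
The plan is to sandwich the unknown solution between the zero sub-solution and the super-solution $v_0$, and then to obtain $u$ as a monotone limit of classical solutions with truncated (hence smooth and bounded) boundary data.

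First I would verify that $v_0$ itself is a classical solution of the pure heat equation $\partial_t v + \tfrac12 \partial_{xx}^2 v = 0$ on $D$ with the boundary data \eqref{e:boundary1}. Finiteness uses $q>2$: since $y_\tau = ((q-1)(T-\tau))^{-1/(q-1)}$ on $\{\tau<T\}$ and the exit-time density of $W$ from $(0,L)$ is bounded on $(0,T)$ for $x\in(0,L)$, the expectation is finite precisely when $1/(q-1)<1$. Interior smoothness follows from standard heat-kernel/mean-value representations; the boundary values on the lateral sides and on $\{T\}\times(0,L)$ follow from dominated convergence applied along the exit-time decomposition $\tau<T$ versus $\tau\ge T$. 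Since $v_0\ge 0$, it is a super-solution of \eqref{e:PDEfirst}: indeed $\partial_t v_0+\tfrac12\partial_{xx}^2 v_0 - v_0^q = -v_0^q\le 0$.

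Next I truncate the boundary data: let $y_t^n = y_t\wedge n$, and on $\{T\}\times(0,L)$ keep $0$; after a small mollification in the corners one obtains bounded, smooth, compatible boundary data and by classical parabolic theory (\cite{friedman}) there exists a classical solution $u_n$ of \eqref{e:PDEfirst} with these data. The comparison principle for semilinear parabolic equations with a monotone nonlinearity $V\mapsto V^q$ gives $0\le u_n\le u_{n+1}\le v_0$ on $\bar D$ for all $n$ (the zero function is a sub-solution; $v_0$ is a super-solution; each $u_{n+1}$ dominates $u_n$ because its boundary data dominate those of $u_n$). Define
\begin{equation}
u(x,t) \doteq \lim_{n\to\infty} u_n(x,t), \qquad (x,t)\in\bar D.
\end{equation}
Uniform local boundedness of $(u_n)$ on compact subsets of $D$ together with interior Schauder estimates for the linear equation $\partial_t u_n + \tfrac12\partial_{xx}^2 u_n = u_n^q$ (treating $u_n^q$ as a bounded forcing term) yields locally uniform $C^{2+\alpha,1+\alpha/2}$ bounds, so along a subsequence $u_n\to u$ in $C^{2,1}_{\mathrm{loc}}(D)$ and $u$ is a classical solution of \eqref{e:PDEfirst} on $D$, $C^\infty$ in $x$ and $C^1$ in $t$ by a bootstrap.

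The main obstacle is establishing continuity on $\bar D\setminus\{(0,T),(L,T)\}$. On the lateral boundary, at a point $(0,t_0)$ with $t_0<T$, monotone convergence gives $\liminf_{(x,t)\to(0,t_0)} u(x,t) \ge \sup_n y_{t_0}\wedge n = y_{t_0}$, while $u\le v_0$ and the continuity of $v_0$ there give $\limsup \le y_{t_0}$; the same argument handles $(L,t_0)$. At an interior terminal point $(x_0,T)$ with $x_0\in(0,L)$ the sandwich $0\le u\le v_0$ and the already established $v_0(x_0,T)=0$ force $u(x,t)\to 0$, which is exactly the required terminal value. This is precisely the step where the role of $v_0$ as upper bound (and hence the assumption $q>2$) is indispensable: without it one cannot rule out that $u$ retains a nonzero trace at $t=T$ for some $x\in(0,L)$.

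Finally, uniqueness of $u$ in the class $0\le u\le v_0$ follows from a standard comparison argument: if $u_1,u_2$ are two such classical solutions, their difference $w=u_1-u_2$ solves a linear parabolic equation with bounded coefficient (by the mean-value theorem on $V\mapsto V^q$ applied to values in $[0,\|v_0\|_\infty]$ on any compact subregion) and vanishes on $\partial D\setminus\{(0,T),(L,T)\}$; the two excluded corner points are removable for the maximum principle because $w$ is bounded there thanks to $u_1,u_2\le v_0$, and an exhaustion of $\bar D$ by subdomains avoiding the corners yields $w\equiv 0$.
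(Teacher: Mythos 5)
Your existence, regularity, and boundary-continuity arguments follow essentially the paper's own route: approximate the singular lateral data from below by bounded, smooth, compatible data; invoke the classical existence theorem of \cite{friedman}; pass to a monotone limit dominated by $v_0$; get interior smoothness from parabolic estimates (\cite{lady:solo:ural:68}) with a bootstrap; and obtain continuity on $\bar D\setminus\{(0,T),(L,T)\}$ by squeezing $u$ between the approximants and $v_0$. The differences are minor: the paper shifts time ($y_{t-1/n}$) and builds an explicit two-parameter family $\psi_{m,n}$ instead of truncating and mollifying, and it derives the Feynman--Kac representation \eqref{e:defu} of $u$ by dominated convergence (which it reuses to prove that $u$ solves \eqref{e:PDEfirst} and later in Proposition \ref{p:YeqYmin}); your $C^{2,1}_{\mathrm{loc}}$-convergence substitute is legitimate for this proposition, and your lateral-boundary squeeze is arguably cleaner than the paper's decomposition $u=-U_1+v_0$.

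The genuine gap is in your uniqueness step. You claim the corners are removable ``because $w$ is bounded there thanks to $u_1,u_2\le v_0$.'' But $v_0$ is unbounded in every neighborhood of the corners: on the lateral boundary $v_0(0,t)=v_0(L,t)=y_t=((q-1)(T-t))^{1-p}\to\infty$ as $t\to T$, so $|w|\le 2v_0$ gives no boundedness near $(0,T)$ and $(L,T)$, and without a growth condition the excision/exhaustion argument does not close. Concretely, on the boundary of an excised parabolic ball of radius $\rho$ about a corner your only control is $|w|\lesssim \rho^{1-p}$, and you must show this blow-up is beaten by the smallness of the caloric measure of the excised piece; boundedness alone is simply false here.

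This quantitative trade-off is exactly what the paper's uniqueness proof supplies, probabilistically. Writing the Feynman--Kac formula for $w=u_1-u$ (which solves \eqref{e:PDEmon} with potential $R\ge 0$) stopped at $\tau\wedge(T-1/n)$, the lateral contributions vanish because both solutions equal $y_t$ there, leaving
\begin{equation*}
|w(x,t)| \;\le\; \mathbb{E}_{x,t}\!\left[2\,v_0\!\left(W_{T-1/n},\,T-\tfrac1n\right)\mathbf{1}_{\{\tau>T-1/n\}}\right]
\;=\; 2\,\mathbb{E}_{x,t}\!\left[y_\tau\,\mathbf{1}_{\{T-1/n<\tau<T\}}\right],
\end{equation*}
where the equality is the Markov property and the tower rule applied to the representation \eqref{e:defv0} of $v_0$. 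The right-hand side tends to $0$ as $n\to\infty$ by dominated convergence, since $\mathbb{E}_{x,t}[y_\tau\mathbf{1}_{\{\tau<T\}}]=v_0(x,t)<\infty$ --- this is where $q>2$ enters once more. Your exhaustion idea can be repaired along these lines (the blow-up rate $\rho^{1-p}$ with $p<2$ against a caloric measure of order $\rho$ indeed yields a vanishing product), but as written the boundedness premise is wrong and the removability assertion is unsupported.
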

An intermediate step in the proof of Proposition \ref{p:solvePDE}
 will be to show that $v_0$ of \eqref{e:defv0} solves the classical heat equation
\begin{equation}\label{e:PDE0}
\partial _t V + \frac{1}{2} \partial^2_{xx} V  = 0, 
\end{equation}
over $D =(0,L) \times (0,T)$, with the same boundary condition
\eqref{e:boundary1} (see subsection \ref{ss:heat} below).
In this, the assumption $q >2$ and the following fact
will play a key role:
$q > 2$ implies that $-1 < 1-p <0$ and thus the solution \eqref{e:trivialsol} is integrable:
\begin{equation}\label{e:int_trivialsol}
\int_0^T y_s ds < \infty.
\end{equation}

Following notation parallel to that of \cite{friedman} define
\begin{align*}
B_{t_0}  \doteq  \{ (x,t), x \in (0,L) , t = t_0\}, ~~B\doteq   \{ (x,t), x \in (0,L), t = T \}, 
~~S  \doteq  \partial D \setminus \{B_0 \cup \bar{B} \};
\end{align*}
these sets are depicted in Figure \ref{f:thedomain}. 

\begin{figure}[h]
\begin{center}
\scalebox{0.8}{
\centerline{\input{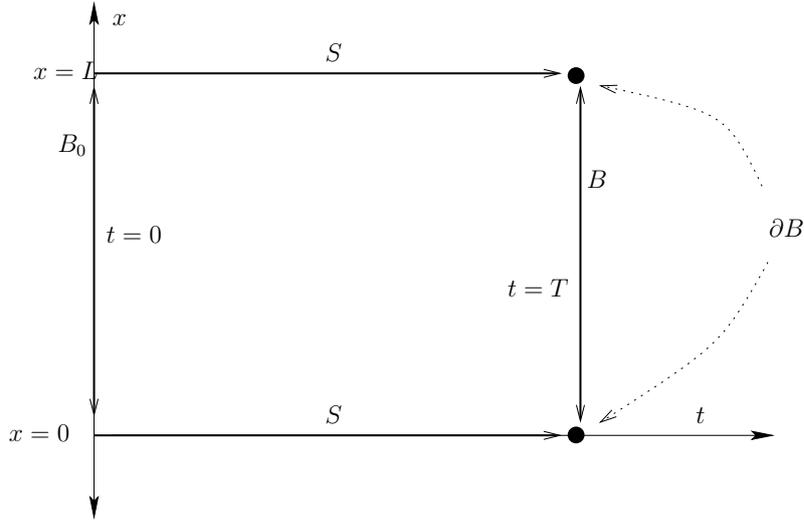}}}
\end{center}
\caption{\hspace{0.25cm}The domain and its boundaries\label{f:thedomain}}
\end{figure}

The proof of Proposition \ref{p:solvePDE} will proceed as follows:
\begin{enumerate}
\item $q > 2$ implies that $v_0$ solves (in the classical sense) the linear heat equation
\eqref{e:PDE0} and the boundary condition \eqref{e:boundary1} 
(see Lemmas \ref{l:v0} and \ref{l:regv0} in
subsection \ref{ss:heat} below),
\item Approximate
\eqref{e:boundary1} by a sequence of smooth boundary conditions to
which standard classical PDE theory
applies and yields classical solutions.
The solutions of the approximating equations are monotone in the
approximation parameter, and their limit is our candidate solution $u$. 
It\^o's formula implies an expectation representation 
for the approximate solutions. The solution $v_0$ of the heat equation in the first step 
gives us the necessary bound 
to invoke the dominated convergence theorem to infer that $u$ satisfies the same expected value representation 
as the prelimit functions (see Lemma \ref{l:defu} and \eqref{e:defu}).
\item Establish the regularity properties of $u$ (see Lemma \ref{l:regularityexistence}); 
we do this in two different ways.
The first approach relies only on probabilistic arguments and 
is elementary and direct, it uses the following elements:
a) explicit formulas for the density of the hitting time $\tau =\inf \{t: W_t \in \{0,L\}\}$ and 
the density of $W_t$ over sample paths restricted to stay in the interval $(0,L)$ upto time $t$ 
b) Duhamel's principle and c) the expected value representation of $u$. 
The second approach is based on analytic arguments for parabolic uniformly elliptic PDE. 
\item Establish the continuity properties of $u$ (Lemma \ref{l:continuityexistence}),
\item Once enough regularity is proved, the proof that $u$ actually solves the PDE follows from 
It\^o's formula, the expectation representation of $u$ and the strong Markov property 
of $W$.
\end{enumerate}
The above elements are put together in the Proof of Proposition \ref{p:solvePDE} given at the end of subsection \ref{ss:proof1}.

\subsubsection{Solution of the classical heat equation with singularities at the corners} \label{ss:heat}
The classical theory of Brownian
motion and of the classical heat equation
 suggest that $v_0$ is the unique solution
of \eqref{e:PDE0} and the boundary condition \eqref{e:boundary1}.
Let us prove that $v_0$ is finite and that it indeed
solves \eqref{e:PDE0} and \eqref{e:boundary1}.
Equation \cite[(4.1)]{douady1999closed}
(or It\^o's formula and direct computation) implies the following formula for the distribution function of $\tau$
conditioned on $W_t = x$:
\begin{equation}\label{e:disttau}
\mathbb{P}_{x,t}(\tau \le s )=1 + \mathbb{P}_{x,0}( W_{s-t} \in A^c )  - \mathbb{P}_{x,0}(W_{s-t} \in A )
\end{equation}
where $A \doteq \ \cup_{n \in {\mathbb Z} } \{ 2nL + [0,L] \}$,
$x \in [0,L]$
and $s > t.$
Substitute $A$ in \eqref{e:disttau} and change variables to rewrite
\eqref{e:disttau} as
\[
 \mathbb{P}_{x,t}(\tau \le s )=
1 + \sum_{n \in {\mathbb Z}}\frac{1}{\sqrt{2\pi}}
\left(
 \int_{\frac{(2n+1)L-x}{\sqrt{s-t}} }^{\frac{(2n+2)L -x}{\sqrt{s-t}}} e^{-y^2/2}dy  -
 \int_{\frac{2nL-x}{\sqrt{s-t}} }^{\frac{(2n+1)L -x}{\sqrt{s-t}}} e^{-y^2/2}dy \right).
\]
For $x \in (0,L)$, 
the derivative of the last display with respect to $s$
gives the density of $\tau$:
\begin{align}\label{e:densitytau}
f_\tau(x,t,s) \doteq\frac{(s-t)^{-3/2}}{\sqrt{2\pi}}  
\sum_{n \in {\mathbb Z}} 
((2n+1)L -x) e^{-\frac{((2n+1)L -x)^2}{2(s-t)}}
-
(2nL -x) e^{-\frac{(2nL -x)^2}{2(s-t)}}
\end{align}
(for $x \in \{0,L\}$, $\tau = t$ and $\mathbb{P}_{x,t}(\tau >s ) = 0$ identically
for $s > t$ and indeed the right side of \eqref{e:disttau} is identically
$0$ for $x \in \{0,L\}$);
Figure \ref{f:ftau} shows
the graph of $f_\tau$ for $t=2$, $L=4$, $x=3.5$ 
\ninseps{ftau}{The graph of $f_\tau$, $t=2$, $L=4$, $x=3.5$}{0.4}

For $(x,t) \in D$, write the expectation in \eqref{e:defv0} in terms of the density $f_\tau$:
\begin{equation}\label{e:v0}
v_0(x, t) = \int_t^{T} f_\tau(x,t,s) y_s ds = \int_t^{T} f_\tau(x,t,s) ((q-1)(T-s))^{1-p}ds.
\end{equation}
The formula \eqref{e:densitytau} and the behavior of $x\log(x)$ around $0$ imply that $f_\tau$ is continuous and smooth with continuous derivatives over the region
$[\delta_1,L-\delta_2] \times [t,\infty)$ for any $\delta_i > 0$
with $\delta_1 < L -\delta_2.$ Therefore from \eqref{e:int_trivialsol} we deduce that $v_0 (x,t)< \infty$ for $(x,t) \in D$ and $v_0$
 has the same regularity as $f_\tau$ in compact subsets of
$D$.
Integrability of $t\mapsto y_t$ \eqref{e:int_trivialsol}, the boundedness of $f_\tau$ in
compact subsets of $D$,
\eqref{e:v0} and the dominated convergence theorem also imply
$v_0(x,t) \rightarrow 0$ for $x \in (0,L)$ and $t\rightarrow T$.
Furthermore, for any $t < T$,  $f_\tau$ is continuous as a function
of $(x,s)$ on any compact
strip $[0,L] \times [T-\delta,T]$ as long as $t < T-\delta$.
This and \eqref{e:int_trivialsol}
imply
\begin{equation}\label{e:error0}
\left|v_0(x,t)  - \int_{t}^{T-\delta} f_\tau(x,t,s) y_s ds\right| \le \epsilon
\end{equation}
for any $\epsilon > 0$ when
$\delta > 0$ is small enough. Note
\[
\int_{t}^{T-\delta} f_\tau(x,t,s) y_s ds = 
{\mathbb E}_{x,t}\left[y_\tau
\mathbf{1}_{\{\tau < T-\delta\}}\right].
\]
$\mathbb{P}_{x,t}(\tau = T-\delta) = 0$, and $s\mapsto y_s \mathbf{1}_{\{s < T- \delta \}}$ 
is a continuous
and bounded function for $s \neq T-\delta.$ 
Now choose any sequence $(t_n,x_n) \rightarrow (t,x)$, $x \in \{\{0,L\}\}$
and $t < T.$
The law of the iterated logarithm \cite[Theorem 9.23]{MR1121940}
implies that
the hitting time
$\tau$ converges to $t$ 
as $n \rightarrow \infty.$
These imply 
\[
\lim_{n\rightarrow \infty} {\mathbb E}_{(x_n,t_n)}\left[y_\tau
\mathbf{1}_{\{\tau < T-\delta\}}\right] = y_t.
\]
This and \eqref{e:error0} imply
$v_0(x_n,t_n) \rightarrow y_t$.
Let us record what we have proved so far as a lemma:
\begin{lemma} \label{l:v0}
The function $v_0$ defined in \eqref{e:defv0} has the integral
representation \eqref{e:v0}, is smooth in $D$ (with continuous derivatives
of all orders in compact subsets of $D$) and continuous
on $\bar{D}\setminus \partial B$ and satisfies the boundary condition
\eqref{e:boundary1}.
\end{lemma}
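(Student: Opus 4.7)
The plan is to derive the four assertions directly from the probabilistic definition \eqref{e:defv0}. First I would obtain the integral representation \eqref{e:v0} by disintegrating with respect to $\tau$: since, conditionally on $W_t = x$, the hitting time $\tau$ has density $f_\tau(x,t,\cdot)$ on $(t,\infty)$ as given by \eqref{e:densitytau}, inserting this density into $\mathbb{E}_{x,t}[y_\tau \mathbf{1}_{\{\tau < T\}}]$ yields \eqref{e:v0}. Finiteness of the right-hand side follows from the fact that $f_\tau(x,t,s)$ is bounded in $s \in (t,T)$ for $(x,t)$ in any compact subset of $D$, combined with $q>2$ and \eqref{e:int_trivialsol}, which make $y_s$ integrable on $[0,T]$.

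Next, smoothness in $D$ comes from differentiating \eqref{e:v0} under the integral sign. On any compact rectangle $[\delta_1, L-\delta_2]\times[0, T-\delta_3]\subset D$, the explicit formula \eqref{e:densitytau} shows that each summand is Gaussian in $(x,t)$ with Gaussian tails in $n$, so all $(x,t)$-derivatives of $f_\tau$ of any order are bounded uniformly in $s \in (t,T)$. The integrable dominating function $y_s$ from \eqref{e:int_trivialsol} then justifies differentiation under the integral sign to any order in $x$ and in $t$.

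Continuity on $\bar D \setminus \{(0,T),(L,T)\}$ is the main technical step and splits into two cases. At an interior-top point $(x_0, T)$ with $x_0 \in (0,L)$, I would split the integral \eqref{e:v0} at $T-\delta$: the tail $\int_{T-\delta}^T f_\tau\, y_s\, ds$ is uniformly small in a neighborhood of $(x_0,T)$ by boundedness of $f_\tau$ and integrability of $y$, while the bulk integral vanishes as $t \uparrow T$ because $\mathbb{P}_{x,t}(\tau < T-\delta) \to 0$ when $x$ is bounded away from $\{0,L\}$. At a lateral point $(x_0, t_0)$ with $x_0 \in \{0,L\}$ and $t_0 < T$, I would again truncate at $T-\delta$ and then exploit the fact that for $(x_n, t_n) \to (x_0, t_0)$ the hitting time $\tau$ converges a.s.\ to $t_0$; the law of the iterated logarithm provides this, since a Brownian path started arbitrarily close to $\{0,L\}$ hits the boundary almost immediately. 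Continuity of $s \mapsto y_s \mathbf{1}_{\{s < T-\delta\}}$ and a bounded-convergence argument then give $v_0(x_n, t_n) \to y_{t_0}$.

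The main obstacle is this lateral-boundary continuity, where one must simultaneously manage the blow-up of $y$ at $s=T$ (handled by the truncation at $T-\delta$, and fundamentally by the assumption $q>2$) and the degeneration of $f_\tau$ as $x_0$ approaches $\{0,L\}$ (handled by the law of the iterated logarithm). Once these pieces are in hand, the boundary condition \eqref{e:boundary1} follows at once: the interior-to-lateral limit gives $v_0(0,t) = v_0(L,t) = y_t$ for $t < T$, and the interior-to-top limit gives $v_0(x,T) = 0$ for $x \in (0,L)$.
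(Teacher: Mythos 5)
Your proposal is correct and follows essentially the same route as the paper: representing $v_0$ via the hitting-time density $f_\tau$, getting interior smoothness from differentiation under the integral (using the Gaussian decay of $f_\tau$ and the integrability \eqref{e:int_trivialsol} of $y$), handling the top boundary by boundedness of $f_\tau$ plus integrability of $y$, and handling the lateral boundary by truncating at $T-\delta$ and invoking the law of the iterated logarithm to force $\tau \to t_0$. The only cosmetic difference is that the paper explicitly notes $\mathbb{P}_{x,t}(\tau = T-\delta)=0$ to excuse the discontinuity of $s \mapsto y_s \mathbf{1}_{\{s<T-\delta\}}$ at $s = T-\delta$, a point you should add for completeness.
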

Next we will use It\^o's formula and the regularity of $v_0$ to show
that in fact it is a solution to the heat equation \eqref{e:PDE0}.
\begin{lemma}\label{l:regv0}
$v_0$ solves \eqref{e:PDE0}.
\end{lemma}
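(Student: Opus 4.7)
The plan is to combine the strong Markov property of Brownian motion with the regularity and boundary behavior of $v_0$ already established in Lemma \ref{l:v0}, so that a standard stopped-martingale / It\^o argument reads off the PDE at interior points.

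First I would fix $(x,t) \in D$ and, for an arbitrary $t' \in (t,T)$, apply the strong Markov property at the bounded stopping time $\tau \wedge t'$. Splitting the expectation defining $v_0(x,t)$ on the events $\{\tau \le t'\}$ and $\{\tau > t'\}$ gives
\[
v_0(x,t) = \mathbb{E}_{x,t}\!\left[y_\tau \mathbf{1}_{\{\tau \le t'\}}\right] + \mathbb{E}_{x,t}\!\left[v_0(W_{t'},t')\,\mathbf{1}_{\{\tau > t'\}}\right],
\]
because on $\{\tau > t'\}$ the post-$t'$ path restarts at $(W_{t'},t') \in D$. On $\{\tau \le t'\}$ we have $W_\tau \in \{0,L\}$, so the boundary condition $v_0(0,s) = v_0(L,s) = y_s$ from Lemma \ref{l:v0} lets me replace $y_\tau$ by $v_0(W_\tau,\tau)$. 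The two terms then combine into
\[
v_0(x,t) = \mathbb{E}_{x,t}\!\left[v_0(W_{\tau \wedge t'},\tau \wedge t')\right],
\]
i.e.\ the stopped process $s \mapsto v_0(W_{s \wedge \tau}, s \wedge \tau)$ is a martingale on $[t,T)$ under $\mathbb{P}_{x,t}$.

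Next I would apply It\^o's formula. Since by Lemma \ref{l:v0} the function $v_0$ is $C^{2,1}$ on $D$, I can localize with the stopping times $\tau_n \doteq \inf\{r > t:\ W_r \notin [1/n,L-1/n]\} \wedge (T - 1/n)$, which keep the Brownian path in a compact subset of $D$ where $v_0$, $\partial_x v_0$, $\partial_{xx}^2 v_0$, $\partial_t v_0$ are uniformly bounded. It\^o gives
\[
v_0(W_{\tau_n \wedge t'},\tau_n \wedge t') - v_0(x,t) = \int_t^{\tau_n \wedge t'} \partial_x v_0(W_r,r)\,dW_r + \int_t^{\tau_n \wedge t'} \Bigl(\partial_t v_0 + \tfrac{1}{2}\partial_{xx}^2 v_0\Bigr)(W_r,r)\,dr.
\]
The stopped martingale property established in the previous step forces the expectation of the right-hand side to vanish, and the stochastic integral is a true martingale on this localization, so the drift term has zero expectation for every $t' > t$ and every $n$.

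Finally I would conclude pointwise. Dividing by $t' - t$ and letting $t' \downarrow t$, dominated convergence (uniform bounds on the integrand on the compact subset) gives
\[
\Bigl(\partial_t v_0 + \tfrac{1}{2}\partial_{xx}^2 v_0\Bigr)(x,t) = 0,
\]
and since $(x,t) \in D$ was arbitrary, $v_0$ solves \eqref{e:PDE0}. The main subtlety is organizing the localization so that It\^o's formula is only invoked in the interior where $v_0$ has the regularity supplied by Lemma \ref{l:v0}; the singular corners $(0,T),(L,T)$ never enter the argument because we work for $t' < T$ and inside compacts strictly contained in $D$.
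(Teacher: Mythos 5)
Your proof is correct and follows essentially the same route as the paper: both derive a dynamic-programming (mean-value) identity for $v_0$ from the strong Markov property and then apply It\^o's formula on a compact subset of $D$ where Lemma \ref{l:v0} supplies the smoothness, concluding that the drift term must vanish. The only difference is cosmetic --- the paper finishes by contradiction (a sign-definite nonzero integrand near a point would force the drift integral's expectation to be nonzero), whereas you divide by $t'-t$ and let $t' \downarrow t$; both are standard ways to read off the pointwise PDE.
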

\begin{proof}
Suppose there is $(x_0,t_0) \in D$ such that 
\begin{equation}\label{e:nonzeroassumption}
\frac{1}{2} \frac{\partial^2 v_0}{\partial x^2}(x_0,t_0) + \frac{ \partial v_0}
{\partial t} (x_0,t_0) \neq 0.
\end{equation}
Let $\delta > 0$, be so that $0 < x_0-\delta < x_0+ \delta < L$ and
$t_0 + \delta < T.$ By the previous proposition $v_0$ is smooth
on the compact set $N_{x_0} \doteq [x_0 - \delta, x_0 + \delta ] \times [t_0, t_0 + \delta]$
with continuous derivatives of all orders. Let $\tau_\delta$ be the first
time the process $(t,W_t)$ hits $\partial N_{x_0}$. 
By definition $\tau_\delta < \tau \wedge T.$ Conditioning on ${\mathscr F}_{\tau_\delta}$,
the strong Markov property of the Brownian 
motion
and the definition of $v_0$ imply
\begin{equation}\label{e:dynamicsprogamming}
v_0(x_0,t_0) = {\mathbb E}_{x_0,t_0}[v_0(W_{\tau_\delta}, \tau_\delta)].
\end{equation}
It\^o's formula applied to $v_0$ upto time $\tau_\delta$ gives
\begin{align}\label{e:contradiction}
&{\mathbb E}_{x_0,t_0}[
v_0(W_{\tau_\delta},\tau_\delta)] -
v_0(x_0,t_0) \\
&~~~=
{\mathbb E}_{x_0,t_0}\left[
\int_t^{\tau_\delta}\left( 
0.5 \frac{\partial^2 v_0}{\partial x^2}(W_s,s) 
+ \frac{ \partial v_0}{\partial t}(W_s,s)\right) ds \right].\notag
\end{align}
\eqref{e:dynamicsprogamming} implies that the left side of the last display
equals $0$. But the continuity of $0.5 \frac{\partial^2 v_0}{\partial x^2}+ 
\frac{ \partial v_0}{\partial t}$ on $N_{x_0}$ , $\tau_\delta \neq 0$
and \eqref{e:nonzeroassumption} imply that 
the right side of \eqref{e:contradiction} is nonzero, which is a contradiction.
Hence, \eqref{e:nonzeroassumption} cannot happen and $v_0$ indeed solves
\eqref{e:PDE0} in $D$.
\end{proof}

\subsubsection{Treating the $V^q$ term} \label{ss:proof1}

Equipped with the classical solution $v_0$ of the heat equation
\eqref{e:PDE0} and the boundary condition \eqref{e:boundary1}
we will proceed as follows to construct a classical
solution to \eqref{e:PDEfirst} and \eqref{e:boundary1}: define a
family of boundary conditions $y^{m,n}$ approximating $y$ (decreasing in $m$ and increasing in $n$) which are smooth upto
$\partial D$ satisfying the existence uniqueness results from the
classical theory of parabolic PDE \cite{friedman}. This gives us a family
of functions $u_{m,n}$, solving \eqref{e:PDEfirst} with boundary values
$y^{m,n}$ and which, by It\^o's formula,
have expected cost representations.
This, the dominated convergence theorem and \eqref{e:int_trivialsol} give, upon taking limits of $\{u_{m,n}\}$,
a candidate solution $u$, which also has the same expected cost representation as the prelimit functions $u_{m,n}.$ We will then use the expected cost 
representation of $u$ to improve our knowledge of $u$'s regularity.

The next lemma is a consequence of the maximum principle\footnote{The maximum principle also holds under much more weaker assumptions (see among others Lemma 2.7 in \cite{marcus1999initial} or Lemma 1.6 in \cite{marc:vero:01}).} and is well known for BSDE with monotone generator.
\begin{lemma}\label{l:monotonicity}
$\ $ 
\begin{enumerate}
\item Suppose $u_0 \ge 0$ and $u_1 \ge 0$ are two bounded smooth solutions of \eqref{e:PDEfirst} such that
$u_0|_{\partial D \setminus B_0} \ge u_1|_{\partial D \setminus B_0}.$ Then $u_0 \ge u_1$ on $D$.

\item Assume that $u_0$ is a continuous solution of \eqref{e:PDEfirst} with $|u_0|\leq K$ on $\partial D\setminus B_0$. Then $|u_0| \leq K$ on $\bar D$. 
\end{enumerate}

\end{lemma}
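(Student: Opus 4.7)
Both parts follow from the parabolic comparison principle for the semilinear operator $\mathcal{L}V := \partial_t V + \tfrac{1}{2}\partial^2_{xx} V - V^q$, and I would prove them in the probabilistic spirit used elsewhere in the paper: linearize the reaction term and read off the sign of the answer from a Feynman--Kac representation obtained via It\^o's formula.

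For Part 1, set $w := u_0 - u_1$. Because $u_0, u_1$ are bounded and nonnegative, the mean value theorem gives $u_0^q - u_1^q = c(x,t)\, w$ with
\[
c(x,t) := q\int_0^1 \bigl(\lambda u_0(x,t) + (1-\lambda)u_1(x,t)\bigr)^{q-1}\, d\lambda \ge 0
\]
bounded on $D$. Thus $w$ satisfies the linear backward PDE $\partial_t w + \tfrac{1}{2}\partial^2_{xx} w - c(x,t)\, w = 0$, together with $w \ge 0$ on $\partial D \setminus B_0$. For $(x,t) \in D$, apply It\^o's formula to
\[
M_s := w(W_s, s)\, \exp\!\Bigl(-\int_t^s c(W_r, r)\, dr\Bigr)
\]
between $s = t$ and the first exit time $\tau \wedge T$ of $(W_s, s)$ from $D$: the PDE makes the drift of $M$ vanish, and boundedness upgrades $M$ from a local martingale to a true martingale, so
\[
w(x,t) = \mathbb{E}_{x,t}\!\Bigl[\, w(W_{\tau \wedge T}, \tau \wedge T)\exp\!\Bigl(-\int_t^{\tau \wedge T} c(W_r, r)\, dr\Bigr)\Bigr] \ge 0,
\]
since the process exits $D$ through $\partial D \setminus B_0$ (the time coordinate is strictly increasing) and $w \ge 0$ there.

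For Part 2, I would compare $u_0$ with the constant super- and subsolutions $\pm K$: a direct computation gives $\mathcal{L}(K) = -K^q \le 0$ and $\mathcal{L}(-K) = K^q \ge 0$ (interpreting $V^q$ as $V|V|^{q-1}$ when $u_0$ may change sign, which is the convention consistent with the BSDE drift $f(y) = -y|y|^{q-1}$ of the paper and which keeps the linearization of the reaction term valid). Setting $w := K - u_0$ (respectively $w := u_0 + K$), the same linearization yields $\partial_t w + \tfrac{1}{2}\partial^2_{xx} w - c\, w \le 0$ (respectively $\ge 0$), so the analogous exponentially weighted process is a bounded supermartingale (respectively submartingale). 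Stopping at $\tau \wedge T$ and using $w \ge 0$ on $\partial D \setminus B_0$ yields $u_0 \le K$ and $u_0 \ge -K$ on $\bar D$.

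The only technical point is upgrading the local (super)martingale to a true one in order to extract the Feynman--Kac bound; this is routine given the uniform boundedness of $w$ and $c$ and the fact that $\tau \wedge T \le T$. A minor subtlety in Part 2 is that we are comparing a solution of $\mathcal{L}V = 0$ with functions that satisfy only $\mathcal{L}V \le 0$ or $\ge 0$, but this just replaces the equality in the It\^o drift computation by the appropriate inequality.
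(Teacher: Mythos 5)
Your proof is correct and follows essentially the same route as the paper: for Part 1 the paper also linearizes the difference via the quotient $R = (u_0^q-u_1^q)/(u_0-u_1)\mathbf{1}_{u_0\neq u_1}\ge 0$ (your mean-value integral $c(x,t)$ is exactly this quantity) and reads off the sign from the Feynman--Kac representation obtained by applying It\^o's formula to the exponentially weighted process stopped at $\tau\wedge T$. The only divergence is in Part 2: the paper simply reuses the Part 1 representation with $u_1=0$, giving $u_0(x,t)=\mathbb{E}_{x,t}\bigl[e^{-\int_t^{\tau\wedge T}R(W_s,s)\,ds}\,u_0(W_{\tau\wedge T},\tau\wedge T)\bigr]$, from which $|u_0|\le K$ follows because the exponential weight lies in $[0,1]$, whereas you instead compare $u_0$ with the constant barriers $\pm K$ via a super/submartingale argument. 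Both are valid; your variant has the small merit of making explicit the convention $V^q=V|V|^{q-1}$ needed for the linearization coefficient to stay nonnegative when $u_0$ may change sign, a point the paper leaves implicit.
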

\begin{proof}
$ v = (u_0 - u_1)$ satisfies
\begin{equation}\label{e:PDEmon}
\partial_t v + \frac{1}{2} \partial^2_{xx} v - \frac{1}{2}R v = 0
\end{equation}
where $R = (u_0^q - u_1^q)/(u_0 - u_1) \mathbf{1}_{u_0 \neq u_1} > 0$  and the boundary condition $u_0 -u_1 \ge 0$ on $\partial D \setminus B_0.$ It\^o's formula implies 
\[
v(x,t) = {\mathbb E}_{x,t} \left[e^{-\int_t^{\tau \wedge T} R(W_s,s)ds} v(W_{\tau \wedge T},\tau \wedge T) \right] \ge 0.
\]
For the second claim of the lemma, we use the same estimate with $u_1=0$:
\[
u_0(x,t) = {\mathbb E}_{x,t} \left[e^{-\int_t^{\tau \wedge T} R(W_s,s)ds} u_0(W_{\tau \wedge T},\tau \wedge T) \right].
\]
\end{proof}

The next lemma identifies our candidate solution to the
PDE \eqref{e:PDE0} and the boundary condition \eqref{e:boundary1}.
\begin{lemma}\label{l:defu}
There exists a measurable function $0 \le u \le v_0$ which
satisfies
\begin{equation}\label{e:defu}
u(x,t) = {\mathbb E}_{x,t}\left[e^{-\int_t^\tau u^{q-1}(W_s,s)ds} y_\tau  
\mathbf{1}_{\{\tau < T\}}\right]
\end{equation}
or equivalently
\begin{equation}\label{e:defu_equiv}
u(x,t) = {\mathbb E}_{x,t}\left[-\int_t^{\tau \wedge T } u^q(W_s,s)ds +
y_\tau  \mathbf{1}_{\{\tau < T\}}  \right ],
\end{equation}
for $(x,t) \in D.$ 
\end{lemma}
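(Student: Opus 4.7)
The plan is to realize $u$ as the monotone limit of classical solutions of \eqref{e:PDEfirst} whose smooth, bounded boundary data approximate \eqref{e:boundary1}, and then to transfer the resulting Feynman-Kac identities through that limit.

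First, I would construct (by standard truncation and mollification) a double-indexed family $\{y^{m,n}\}$ of smooth nonnegative boundary data on $\partial D\setminus B_0$, vanishing on $\bar B$, compatible with $y^{m,n}(T)=0$ at the corners, monotone in each of $m,n$, and converging iteratively to $y_t$ on the lateral edges. A concrete choice is $y^{m,n}_t=\psi_m(t)(y_t\wedge n)$, with $\psi_m\in C^\infty([0,T])$ a cutoff vanishing in a shrinking neighborhood of $T$. For each fixed $(m,n)$, classical parabolic theory \cite{friedman} applied to \eqref{e:PDEfirst} with these smooth bounded data yields a unique classical solution $u_{m,n}$, smooth in $x$ and $C^1$ in $t$ on $D$ and continuous on $\bar D$; the second part of Lemma \ref{l:monotonicity} guarantees that $u_{m,n}\ge 0$ and $u_{m,n}\le \|y^{m,n}\|_\infty<\infty$.

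Next, Itô's formula applied to $u_{m,n}(W_s,s)$ between $t$ and $\tau\wedge T$, combined with \eqref{e:PDEfirst} and the corner compatibility $y^{m,n}(T)=0$ (to kill the terminal contribution on $\{\tau\ge T\}$), yields the additive representation
$$u_{m,n}(x,t)={\mathbb E}_{x,t}\left[-\int_t^{\tau\wedge T} u_{m,n}^q(W_s,s)\,ds + y^{m,n}_\tau\mathbf{1}_{\{\tau<T\}}\right].$$
Applying the same procedure to the transformed process $e^{-\int_t^s u_{m,n}^{q-1}(W_r,r)\,dr}\,u_{m,n}(W_s,s)$, whose $ds$-drift vanishes identically by \eqref{e:PDEfirst} so that the process is a bounded (hence genuine) martingale, produces the multiplicative form
$$u_{m,n}(x,t)={\mathbb E}_{x,t}\left[e^{-\int_t^\tau u_{m,n}^{q-1}(W_s,s)\,ds}\,y^{m,n}_\tau\mathbf{1}_{\{\tau<T\}}\right].$$

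Finally, Lemma \ref{l:monotonicity} is used twice: comparing pairs $(u_{m,n},u_{m',n'})$ transfers the monotonicity of $y^{m,n}$ to $u_{m,n}$, while comparing $u_{m,n}$ to $v_0$ produces the crucial pointwise envelope $0\le u_{m,n}\le v_0$, since $v_0$ is a supersolution of \eqref{e:PDEfirst} (indeed $\partial_t v_0+\tfrac12\partial_{xx}^2 v_0 - v_0^q=-v_0^q\le 0$) whose boundary values dominate $y^{m,n}$. Setting $u(x,t):=\lim_{n}\lim_{m} u_{m,n}(x,t)$ then defines a measurable function with $0\le u\le v_0$. Dominated convergence passes the multiplicative identity to the limit at once: the integrating factor lies in $[0,1]$ and the remaining factor is dominated by $y_\tau\mathbf{1}_{\{\tau<T\}}$, whose ${\mathbb P}_{x,t}$-expectation equals $v_0(x,t)<\infty$ by Lemma \ref{l:v0} and \eqref{e:int_trivialsol}; this establishes \eqref{e:defu}. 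Monotone convergence applied to the nonnegative integrand $u_{m,n}^q$ and to $y^{m,n}_\tau\mathbf{1}_{\{\tau<T\}}$ then transfers the additive identity to $u$, yielding \eqref{e:defu_equiv} (one can equivalently note that the three limits in the prelimit additive identity are consistent since the first and third already have finite explicit limits). The principal obstacle throughout is controlling the nonlinear term $u_{m,n}^q$ in the limit; the envelope $v_0$, finite precisely because $q>2$ via \eqref{e:int_trivialsol}, is exactly what makes this control possible.
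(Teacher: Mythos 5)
Your proposal is correct and follows the same overall strategy as the paper's proof: approximate the boundary data \eqref{e:boundary1} from below by smooth, bounded, monotone data; solve \eqref{e:PDEfirst} classically via \cite{friedman}; obtain the additive and multiplicative Feynman--Kac representations of the approximants by It\^o's formula; use the comparison lemma for monotonicity and the envelope $0\le u_{m,n}\le v_0$; and pass to the limit by dominated/monotone convergence. Two deviations are worth recording. First, your approximating data $\psi_m(t)\,(y_t\wedge n)$ vanishes identically near $t=T$, so the nonlinear corner compatibility required by Friedman's theorem (condition \eqref{e:boundaryB} in the paper) holds trivially; the paper instead shifts time, $y^{(n)}_t=y_{t-1/n}$, and interpolates linearly in $x$ near the corners precisely so that \eqref{e:boundaryB} is satisfied. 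Your construction is simpler on this point, provided the truncation $y_t\wedge n$ is smoothed as you indicate. Second, your route to the envelope $u_{m,n}\le v_0$ --- comparing $u_{m,n}$ with $v_0$ viewed as a supersolution of \eqref{e:PDEfirst} --- invokes Lemma \ref{l:monotonicity} outside its stated hypotheses: $v_0$ is not a solution of \eqref{e:PDEfirst}, and, more importantly, it is not bounded on $\bar D$ (it blows up at the corners $\partial B$), so this comparison would need an additional localization/Fatou argument near $\partial B$. The repair is immediate and uses material you already have: the multiplicative representation you derived gives
$u_{m,n}(x,t)\le {\mathbb E}_{x,t}\left[y^{m,n}_\tau\mathbf{1}_{\{\tau<T\}}\right]\le {\mathbb E}_{x,t}\left[y_\tau\mathbf{1}_{\{\tau<T\}}\right]=v_0(x,t)$
at once, since the exponential factor lies in $[0,1]$ and $y^{m,n}_t\le y_t$; this is exactly how the paper reads the bound off \eqref{e:exprepumn}, with no comparison against the unbounded $v_0$ needed.
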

\begin{proof}
Define 
\[
y^{(n)}_t \doteq y_{t - 1/n}.
\]
Hence for any $t\in [0,T]$, $|y^{(n)}_t| \le ((q-1)^{1-p}) n^{p-1}$.  
Define $\psi: \partial D \setminus B \rightarrow {\mathbb R}$ and $\psi_n: \partial D \setminus B \rightarrow {\mathbb R}$
as follows:
\begin{eqnarray*}
\psi(x,T) & =& \psi_n(x,T) = 0, \ x \in (0,L), \\
\psi(x,t) & = & y_t, \ (x,t) \in S, \\
\psi_n(x,t) & = & y^{(n)}_t, \ (x,t) \in S.
\end{eqnarray*}
The function $\psi$ describes exactly the boundary 
condition \eqref{e:boundary1}. Note that $\psi$ and $\psi_n$ are 
discontinuous at the corners $\partial B$ and $\psi_n \nearrow \psi$.
We will now approximate $\psi_n$ by a sequence of smooth $\psi_{m,n}$
so that we can invoke
\cite[Theorem 9, page 205]{friedman}. This result requires that
$\psi_{m,n} \in \bar{C}^{2+\delta}$ for $ \delta \in (\alpha,1)$,
where $\alpha$ is the H\"{o}lder constant associated with the boundary $S$,
and 
\begin{equation}\label{e:boundaryB}
\frac{\partial \psi_{m,n}}{\partial t} + \frac{1}{2} \frac{\partial ^2 \psi_{m,n}}{\partial x^2} 
- \psi_{m,n}^q = 0
\end{equation}
on $\partial B.$

To get the desired sequence, begin with two functions (linear in $x$):
\begin{align*}
\psi^{(0)}_{m,n}(x,t) &\doteq y^{(n)}_t \left[ 1 - x m/2  \right],\\
\psi^{(L)}_{m,n}(x,t) &\doteq y^{(n)}_t \left[1 - (L-x) m /2  \right].
\end{align*}
Let $\eta: {\mathbb R} \rightarrow [0,1]$, $\eta \in C^\infty$
be  as follows: $\eta' < 0$ on $(0,1)$,  $\eta \ge 0$, $\eta(x) = 1$ for $x \le 0$, 
$\eta(x) = 0$, for $x > 1$; one possible choice is
\[
\eta(x) =   \int_{(x \vee 0)\wedge 1}^{1} e^{\frac{-1}{1-(2y-1)^2}} dy  \Bigg / 
\int_{0}^{1} e^{\frac{-1}{1-(2y-1)^2}} dy.
\]
Now define
\[
\psi_{m,n}(x,t) = \psi^{(0)}_{m,n}(x,t) \eta\left(\frac{m^2 x -1}{m-1}\right) + \psi^{(L)}_{m,n}(x,t)\eta\left(\frac{m^2 (L-x) -1}{m-1}\right);
\]
for $ m > 2/L \vee 1.$ The resulting sequence $\psi_{m,n}$ of functions are nonnegative and smooth,
decreasing in $m$ with limit $\psi_n$ and
they all satisfy \eqref{e:boundaryB}. Figure \ref{f:psimn} shows the graph
of $\psi_{m,n}$ for $m=5$, $n=10$, $L=3$ and $T=1$. 
\begin{figure}[h] 
\begin{center}
 \includegraphics[width=0.6\textwidth]{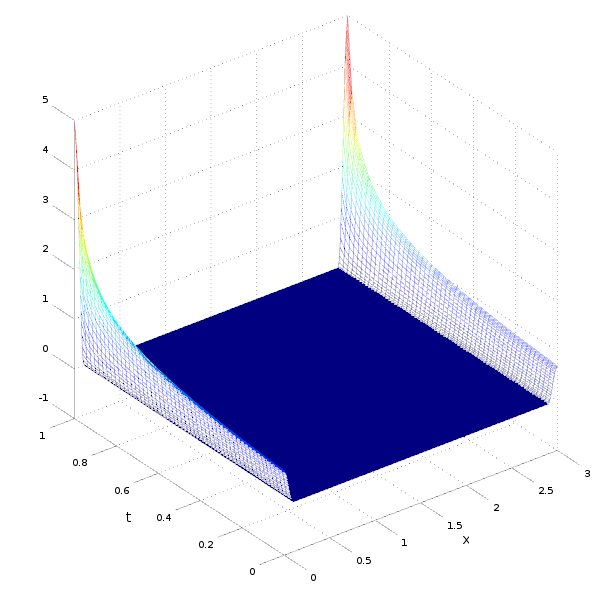}
\end{center}
\caption{\hspace{0.25cm} 
$\psi_{m,n}$, $m=10$, $n=50$, $L=3$, $T=1$\label{f:psimn}}
\end{figure}

Now we consider the PDE \eqref{e:PDEfirst} with the boundary condition
\begin{equation}\label{e:smoothbound}
V|_{\partial D \setminus B_0} = \psi_{m,n}.
\end{equation}
$|\psi_{m,n} | \le ((q-1)^{1-p}) n^{p-1} = \gamma_{n,q}$
 implies that it suffices for our purposes
to look for a solution $|u_{m,n}| \le  \gamma_{n,q}$ (see Lemma \ref{l:monotonicity}).  
The $f({\mathrm V})\doteq {\mathrm V}^q$ term in
\eqref{e:PDEfirst} is continuous in the interval 
$\left[0, \gamma_{n,q} \right]$ and
in particular it satisfies \cite[equation (4.10), page 203]{friedman}
\[
{\mathrm V} f({\mathrm V}) \le A_1 {\mathrm V}^2  + A_2,
{\mathrm V} \in \left[0, \gamma_{n,q} \right],
\]
for constants $A_1, A_2 \ge 0$,
and being monotone itself, it trivially satisfies 
\cite[equation (4.17), page 205]{friedman}, which requires $f$ be bounded by a
monotone function.
These imply that \cite[Theorem 9, page 205]{friedman} is applicable,
and therefore,
the PDE \eqref{e:PDEfirst} and the boundary condition \eqref{e:smoothbound}
have a classical solution $u_{m,n}$ 
that is continuous in $\bar{D}.$ It\^o's formula gives
\begin{align}\label{e:exprepumn}
u_{m,n} (x,t)
&= {\mathbb E}_{x,t}\left[
e^{-\int_t^{\tau \wedge T} u_{m,n}^{q-1}(W_s,s) ds} \psi_{m,n}(W_{\tau \wedge T},\tau \wedge T)  \right]\\
&= {\mathbb E}_{x,t}\left[
-\int_t^{\tau \wedge T} u_{m,n}^q(W_s,s) ds + \psi_{m,n}(W_{\tau \wedge T},\tau \wedge T) 
 \right].\notag
\end{align}
$\psi_{m,n} \ge 0$ and Lemma \ref{l:monotonicity} imply $u_{m,n} \ge 0.$
The functions $\psi_{m,n}$ are decreasing in $m$ and increasing in $n$ and they are all bounded;
this and
Lemma \ref{l:monotonicity} imply that
$u_{m,n}$  is decreasing in $m$ and increasing in $n$. Then we can define
\[
0 \le u_n \doteq \lim_{m \rightarrow \infty} u_{m,n}
\]
and
\[
0 \le u \doteq \lim_{n \rightarrow \infty} u_n.
\]
The representation \eqref{e:exprepumn}, $u_{m,n} \ge 0$, $\psi_{m,n} \le y_t$,
and
the definition \eqref{e:defv0} of $v_0$ imply $u_{m,n} \le v_0.$ Then by the above definitions
$u \le v_0.$
Now
the dominated convergence theorem (where $y_\tau \mathbf{1 }_{\{\tau < T\}}$ serves as the dominating function), 
$v_0 < \infty$, and taking limits in \eqref{e:exprepumn} give
\begin{align*}
u(x,t)
&= {\mathbb E}_{x,t}\left[
e^{-\int_t^{\tau \wedge T} u^{q-1}(W_s,s) ds} y_\tau  \mathbf{1}_{\{\tau < T \}} \right]\\
&= {\mathbb E}_{x,t}\left[
-\int_t^{\tau \wedge T} u^q(W_s,s) ds + y_\tau \mathbf{1}_{\{\tau < T\}} 
\right]. 
\end{align*}
\end{proof}

Our next task is to establish that $u$ is smooth in $D$.
\begin{lemma}\label{l:regularityexistence}
The function $u$ of \eqref{e:defu} is $C^\infty$ in $x$ and continuously differentiable in $t$ over $D$.  
\end{lemma}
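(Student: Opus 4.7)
The plan is to leverage the expectation representation \eqref{e:defu_equiv}, which, together with the smoothness of $v_0$ already established in Lemma \ref{l:v0}, reduces the problem to the regularity of
$$g(x,t) \doteq {\mathbb E}_{x,t}\left[\int_t^{\tau \wedge T} u^q(W_s,s)\,ds\right] = v_0(x,t) - u(x,t).$$
Writing this expectation as an integral against the transition density $p(x,t;y,s)$ of Brownian motion started at $(x,t)$ and killed upon hitting $\{0,L\}$ gives
$$g(x,t) = \int_t^{T}\!\!\int_0^L p(x,t;y,s)\,u(y,s)^q\,dy\,ds,$$
so the task becomes to show that $g$ is $C^\infty$ in $x$ and $C^1$ in $t$.

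First I would invoke the classical method-of-images series representation
$$p(x,t;y,s) = \frac{1}{\sqrt{2\pi(s-t)}}\sum_{n\in\mathbb{Z}} \bigl(e^{-(y-x-2nL)^2/(2(s-t))} - e^{-(y+x-2nL)^2/(2(s-t))}\bigr),$$
which is $C^\infty$ in $(x,t)$ for $s>t$, $y\in(0,L)$ and which obeys standard Gaussian-type bounds on all of its $(x,t)$-derivatives. Next I would use the a priori bound $0\le u\le v_0$ from Lemma \ref{l:defu}, the finiteness of $v_0$ inside $D$, and the integrability $\int_0^T y_s\,ds<\infty$ from \eqref{e:int_trivialsol}, to construct integrable majorants on any compact $K\subset D$ for the differentiated integrand. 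Dominated convergence then permits repeated differentiation under the integral sign, giving the claimed smoothness of $g$ and hence of $u=v_0-g$.

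The main obstacle is the behaviour of the integrand near $s=T$ and near the corners $(0,T),(L,T)$, where $v_0^q$ (and therefore possibly $u^q$) may blow up. I would split the $s$-integral as $\int_t^{T-\delta}+\int_{T-\delta}^{T}$: on the first piece, $p$ and all of its $(x,t)$-derivatives are uniformly bounded on $K\times[0,L]\times[t,T-\delta]$, so integrability is immediate from the boundedness of $u$ on that strip. On the second piece, the Gaussian factor in $p$ forces mass to concentrate away from the corners whenever $(x,t)\in K$ (so that $W_s$ must travel a distance bounded below to reach a neighbourhood of a corner), and this exponential decay dominates the singular growth of $y_s^q\sim(T-s)^{-p}$; this is the point where the hypothesis $q>2$, equivalently $p<2$, feeds in through \eqref{e:int_trivialsol}.

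As an alternative, one can avoid the explicit kernel computations by appealing to interior regularity theory for parabolic equations: since $0\le u\le v_0$, the function $u$ is bounded on every compact subset of $D$, and hence it is a bounded mild solution of the semilinear parabolic PDE \eqref{e:PDEfirst} with smooth nonlinearity $V\mapsto V^q$. Iterated Schauder estimates then propagate H\"older regularity of $u^q$ through the heat semigroup and yield $C^\infty$ smoothness in the $x$-variable and $C^1$ regularity in $t$ throughout $D$.
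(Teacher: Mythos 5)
Your reduction of the problem to $g=v_0-u={\mathbb E}_{x,t}\bigl[\int_t^{\tau\wedge T}u^q(W_s,s)\,ds\bigr]$ and the use of the killed (method-of-images) kernel is exactly how the paper's first proof begins, but your treatment of the singular piece $\int_{T-\delta}^{T}$ contains a genuine error. For $(x,t)$ in a compact $K\subset D$ and $s\in[T-\delta,T]$, the time increment $s-t$ is bounded \emph{away from zero}, so the kernel $p(x,t;y,s)$ and its $x$-derivatives exhibit no Gaussian smallness whatsoever as $s\to T$: they are of order one there (more precisely, of order $\mathrm{dist}(y,\{0,L\})$, with no decay in $T-s$). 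Meanwhile $u^q(y,s)$ is of order $y_s^q=((q-1)(T-s))^{-p}$ near the lateral boundary (recall $u=y_s$ there), and $(T-s)^{-p}$ is \emph{not} integrable in $s$ for any $p>1$; note also that \eqref{e:int_trivialsol} controls $y_s$, not $y_s^q$, so the hypothesis $q>2$ cannot enter the way you claim. Hence the claimed domination fails and the dominated-convergence step justifying differentiation under the integral over $[T-\delta,T]$ is unsupported. (This route could be rescued, but by a different mechanism: the boundary vanishing $|\partial_x^k p(x,t;y,s)|\le C_k\,\mathrm{dist}(y,\{0,L\})$ combined with a matching lower bound $p(x_0,t_0;y,s)\ge c\,\mathrm{dist}(y,\{0,L\})$ at a fixed interior point $(x_0,t_0)$, so that finiteness of $g(x_0,t_0)\le v_0(x_0,t_0)$ itself supplies the integrable majorant; you give no such argument.)

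The paper avoids this difficulty entirely by a strong Markov splitting at time $T-\delta$: it writes $U_1=U_2+U_4$, where $U_2(x,t)={\mathbb E}_{x,t}\bigl[\int_t^{\tau\wedge(T-\delta)}u^q(W_s,s)\,ds\bigr]$ involves only the nonsingular region (and is handled by Duhamel's principle exactly as in your first piece), while $U_4(x,t)={\mathbb E}_{x,t}\bigl[U_1(W_{T-\delta},T-\delta)\mathbf{1}_{\{\tau>T-\delta\}}\bigr]=\int_0^L f_W(x,t,T-\delta,y)\,U_1(y,T-\delta)\,dy$ pushes the \emph{bounded} function $U_1(\cdot,T-\delta)\le v_0(\cdot,T-\delta)$ through the smooth kernel. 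The singular behaviour of $u^q$ on $[T-\delta,T]$ is thereby already integrated inside the bounded quantity $U_1(\cdot,T-\delta)$ and never has to be dominated pointwise; this is the idea your write-up is missing (your $t$-differentiation would also need the fundamental-theorem-of-calculus term $-u^q(x,t)$ coming from the moving lower limit, which the paper makes explicit). Your closing alternative (interior Schauder/bootstrap) is in spirit the paper's second proof, but as stated it presupposes that $u$ is already a (mild or weak) solution of \eqref{e:PDEfirst}, which at this stage of the construction is not known; the paper instead applies the Ladyzhenskaya--Solonnikov--Uraltseva estimates to the approximants $u_{m,n}$, which are genuine classical solutions, obtains bounds uniform in $m$ and $n$, and only then passes to the limit.
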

We will give two different proofs for Lemma \ref{l:regularityexistence}.
The first is based on Duhamel's principle and uses
the density of $W_s$, $s> t$ on $[0,L]$ over the set 
$\{ 0 < W_u < L, \ \forall u \in [t,s] \}$; this proof is based
on fairly elementary calculations and in that sense direct.
We will define a number of functions
($U_1$, $U_2$, $U_3$ and $U_4$) in this proof, which will
also be used in the proof of the continuity of $u$ on $\bar{D}\setminus \partial B$
in Lemma \ref{l:continuityexistence} below.
The second proof uses general analytic results on the solution of uniformly 
elliptic parabolic PDE. 

For the first proof we need the density ${\mathbb P}(W_s \in dx, \tau> s)$ 
whose formula is given as  \cite[Equation (4.1)]{douady1999closed}; 
let us rederive it using our notation.  Parallel to \eqref{e:disttau} one 
first writes \begin{align}\label{e:distWTconstrained}
&\mathbb{P}_{x,t}( W_s \in (0,a), 0 < W_u < L, u \in (t,s) ) \\
&~~~= \mathbb{P}_{x,0}( W_{s-t} \in A_{a})
- \mathbb{P}_{x,0}(W_{s-t} \in B_{a}),\notag
\end{align}
for $ a \in (0,L)$ and where
$A_{a}\doteq \cup_{n \in {\mathbb Z}} \{ 2nL + (0,a) \}$
and $B_a \doteq \cup_{n \in {\mathbb Z}} \{ 2nL + (2L-a,2L) \}$;
the identities $A_L = A$ and  $B_L = A_L^c = A^c$ 
imply that \eqref{e:disttau} is a special case 
of \eqref{e:distWTconstrained}.
Substituting $A_a$ and $B_a$ in \eqref{e:distWTconstrained} and the normal
distribution of $W_{s-t}$ give
\begin{align*}
&\mathbb{P}_{x,t}( W_s \in (0,a), 0 < W_u < L, u \in (t,s) ) \notag\\
&~~=\sum_{n \in {\mathbb Z}}\frac{1}{\sqrt{2\pi}}
\left(
 \int_{\frac{2nL-x}{\sqrt{s-t}} }^{\frac{a + 2nL -x}{\sqrt{s-t}}} e^{-y^2/2}dy 
-
 \int_{\frac{2nL-a-x}{\sqrt{s-t}} }^{\frac{2nL -x}{\sqrt{s-t}}} e^{-y^2/2}dy 
\right).
\end{align*}
Differentiate the last display to get the density of $W_s$ on $(0,a)$
when the sample path of $W$ is constrained to stay in $[0,L]$ over the
time interval $[t,s]$:
\begin{equation}\label{e:densityWTconstrained}
f_W(x,t,s) \doteq \frac{1}{\sqrt{2\pi(s-t)}}
\sum_{n \in {\mathbb Z}} 
\left(
e^\frac{-(a+2nL-x)^2}{2(s-t)}
-
e^\frac{-(2nL-a-x)^2}{2(s-t)}\right).
\end{equation}
The above display implies that $f_W$ is smooth
for $ s > t$  and $x \in (0,L)$
in all variables with continuous derivatives of all orders.
Now we proceed with the first proof of Lemma \ref{l:regularityexistence}.
\begin{proof}
Write $u$ as the sum
\begin{equation}\label{e:split1}
u(x,t) = -U_1(x,t) + v_0(x,t),
\end{equation}
where
\[
U_1(x,t) \doteq {\mathbb E}_{x,t}
\left[ \int_t^{\tau \wedge T} u^q(W_s,s) ds\right],~~
v_0(x,t) = {\mathbb E}_{x,t}\left[  y_\tau \mathbf{1}_{\{\tau < T\}} \right].
\]
We already know that $v_0$ satisfies the conditions listed in the proposition.
It rests to show the same for $U_1$. First,
 $0 \le u \le v_0$ implies
\begin{equation}\label{e:boundU1}
0 \le U_1 \le v_0.
\end{equation}
Fix an arbitrary $T > \delta > 0$. We will now show that $U_1$
is smooth in $(0,L) \times (0,T-\delta)$, $\delta$ being arbitrary, this
will show $U_1$ is smooth on $(0,L) \times (0,T).$
For $t < T-\delta$,
the strong Markov property of $W$ and conditioning
on ${\mathscr F}_{\tau \wedge (T-\delta)}$ imply that we can write
$U_1$ in two pieces as follows:
\begin{align}
U_1(x,t) &= {\mathbb E}_{x,t} \left[ 
\int_t^{\tau \wedge (T-\delta) } u^q(W_s,s) ds\notag
+U_1(W_{T-\delta}, T-\delta ) \mathbf{1}_{\{ \tau > T- \delta \}} \right]\\
&= U_2(x,t) + U_4(x,t),
\label{e:split2}
\end{align}
where
\begin{align*}
U_2(x,t) &\doteq {\mathbb E}_{x,t} \left[ 
\int_t^{\tau \wedge (T-\delta) } u^q(W_s,s) ds\right],\notag\\
U_4(x,t) &\doteq {\mathbb E}_{x,t} \left[
U_1(W_{T-\delta}, T-\delta ) \mathbf{1}_{\{ \tau > T- \delta \}} \right].
\end{align*}
Let us write $U_4$ using the density $f_W$ given in
\eqref{e:densityWTconstrained}:
\[
U_4(x,t) = \int_0^L f_W(x,t,T-\delta,y) U_1(y,T-\delta) dy.
\]
That $0 \le v_0$ is continuous on $\bar{D} \setminus \partial B$ implies that
it is in particular bounded on $[0,L] \times [0,T-\delta].$
This and \eqref{e:boundU1} imply that $U_1(\cdot, T-\delta)$ is bounded by the same bound. This,
the existence and the continuity of the derivatives of $f_W$ in $x$ and
$t$ imply
that $U_4$ is smooth in $(0,L)\times [0,T-\delta]$
and is continuous on $[0,L]\times[0,T-\delta)$.  To study $U_2$ we will use
Duhamel's principle:
\begin{equation*}
U_2(x,t) = {\mathbb E}_{x,t} \left[ \int_t^{\tau\wedge(T-\delta)} u^q(W_s,s) ds\right]
= {\mathbb E}_{x,t} \left[ \int_t^{T-\delta} \mathbf{1}_{\{s < \tau \}} u^q(W_s,s) ds\right].
\end{equation*}
$v_0 \ge u \ge 0$ and Fubini's theorem imply
\begin{equation}\label{e:U2}
U_2(x,t)= \int_t^{T-\delta} 
{\mathbb E}_{x,t} \left[ \mathbf{1}_{\{s < \tau \}} u^q(W_s,s)\right]ds.
\end{equation}
Define
\begin{equation*}
U_3(x,t,s) \doteq 
{\mathbb E}_{x,t} \left[ \mathbf{1}_{\{s < \tau \}} u^q(W_s,s)\right]
\end{equation*}
and write \eqref{e:U2} in terms of $U_3$ (this is Duhamel's principle):
\begin{equation}\label{e:duhamel1}
U_2(x,t) = \int_t^{T-\delta} U_3(x,t,s) ds.
\end{equation}
The function $U_3$ can be written in terms of the density $f_W$ as
\begin{equation}\label{e:U3explicit}
U_3(x,t,s) = \int_0^L f_W(x,t,s,y) u^q(y,s) dy.
\end{equation}
Once again, for $s < T-\delta$, $0 \le  u^q$ is uniformly bounded above by
a constant. This, the smoothness of $f_W$ in $x$ imply that $U_3$ is smooth
in $x$ and $t$ on $(0,L) \times (0,T-\delta)$ for $t < s.$ $U_3$ is smooth in its $x$ variable, 
therefore $U_2$ is also
smooth in $x$ over the region $(0,L) \times (0,T-\delta).$ This, the
smoothness of $U_4$ and \eqref{e:split2} imply the same for $U_1$;
the smoothness of $U_1$ in $x$ and \eqref{e:split1} imply
the smoothness of $u$ in $x$.

Now we will derive the regularity of $u$ in the $t$ variable. Let us
begin with continuity of $U_2$ in $t$:
take any sequence $t_n \rightarrow t$, with $0 < t_n, t < T-\delta$
and $x \in (0,L).$
The continuity of $U_3$ in the $t$ variable implies that
the sequence of functions
\[
s \mapsto \mathbf{1}_{\{ s > t_n \}} U_3(x,t_n,s)
\]
converge almost surely to
\[
s\mapsto \mathbf{1}_{\{s > t \} } U_3(x,t,s)
\]
on the set $(0,T-\delta)$.
This and the bounded convergence theorem imply
\[
U_2(x,t_n) \rightarrow U_2(x,t),
\]
i.e., $U_2$ is also continuous in the $t$ variable on the set
$(0,L) \times(0,T-\delta).$
Thus we have: $U_2$, $U_4$ are both continuous on $(0,L) \times (0,T-\delta).$
This and \eqref{e:split2} imply that $U_1$ is continuous over the
same domain, this and \eqref{e:split1} imply the same for  $u$.
Now going back to \eqref{e:U3explicit} we see that this implies that
$U_3$ is also continuous in the $s$ variable. The continuity of $U_3$ 
in all of its variables,
\eqref{e:duhamel1}
and the fundamental theorem of calculus 
tell us that
$U_2$ is differentiable in $t$ and 
\begin{align*}
\frac{\partial U_2}{\partial t} &= -U_3(x,t,t)
 + \int_{t}^{T-\delta} \frac{\partial U_3}{\partial t}(x,t,s) ds\\
				&= -u^q(x,t) 
 + \int_{t}^{T-\delta} \frac{\partial U_3}{\partial t}(x,t,s) ds,
\end{align*}
which, in particular, is a continuous function on $(0,L) \times ( 0, T-\delta).$
Finally, this, the regularity of $U_4$ and \eqref{e:split2} imply
that $U_1$ is differentiable in $t$ with continuous derivative over
the domain $(0,L) \times (0,T-\delta)$, which in its turn, along with
\eqref{e:split1} imply the same for $u$.
This finishes the smoothness claims of the lemma on $u$. 
\end{proof}

We now give an alternative proof of the same lemma using
classical but deep results on parabolic PDE with a regularization bootstrap argument. 
We know from Lemma \ref{l:defu} that $u$, by construction, is the 
limit of a sequence $u_{m,n}$ of classical solutions of the PDE \eqref{e:PDEfirst} with the boundary 
condition \eqref{e:smoothbound}. The comparison principle (Lemma \ref{l:monotonicity}) implies that for any 
$m,n$ we have: $0\leq u_{m,n}(x,t) \leq y_t$ on $[0,L] \times [0,T]$. Thus the solutions are bounded from above by a 
function independent of $n$ and $m$. This will be useful in the proof below.

\begin{proof}[Second proof of Lemma \ref{l:regularityexistence}]
Now fix $\epsilon > 0$. On $[0,L] \times [0,T-\epsilon]$, $u_{m,n}$ is bounded (uniformly in $n$ and $m$) by $y_{T-\epsilon}$. Moreover the smooth function $u_{m,n}$ satisfies on $(0,L) \times (0,T-\epsilon)$
$$\partial_t u_{m,n} + \frac{1}{2} \partial^2_{xx} u_{m,n} = (u_{m,n})^q = f_{m,n}$$
with the H\"older continuous lateral boundary condition $y^{(n)}_t$ (and a bounded terminal condition $u_{m,n}(x,T-\epsilon)$). Here $f_{m,n}$ is a bounded function. We can apply \cite[Theorem III.10.1]{lady:solo:ural:68} 
(Conditions (1.2) and (7.1) of \cite{lady:solo:ural:68} are trivially satisfied in our setting). Therefore for any $\eta > 0$, $u_{m,n}$ is in $H^{\alpha,\alpha/2}([\eta,T-\eta]\times [0,T-\epsilon])$ (space of functions which $\alpha$-H\"older continuous in the space variable $x$ and $\alpha/2$-H\"older continuous in the times variable $t$). The value of $\alpha > 0$ and the H\"older norm of $u_{m,n}$ does not depend on $m$ and on $n$. In other words the H\"older norm of $u_{m,n}$ is bounded by some constant $C_{\alpha}$ depending only on $\eta$ and $\epsilon$. Moreover we already know that $u_{m,n}$ converges pointwise to $u_n$ (as $m$ goes to $+\infty$) and $u_n$ converges to $u$ (when $n$ tends to $+\infty$)\footnote{The Arzela-Ascoli theorem implies that $u_{m,n}$ (up to a subsequence) converges to some function $\widetilde u \in H^{\alpha,\alpha/2}([\eta,L-\eta]\times [0,T-\epsilon])$. Here $\widetilde u = u$ since pointwise convergence has been proved before.}. Therefore $u_n$ and $u$ are in $H^{\alpha,\alpha/2}([\eta,L-\eta]\times [0,T-\epsilon])$ and their H\"older norms are bounded by the same constant $C_\alpha$. 

Then $u_{m,n}$ is the solution of the same problem but now with more regular functions $f_{m,n}$ and $u_{m,n}(\cdot,T-\epsilon)$. Thus from 
\cite[Theorem IV.10.1]{lady:solo:ural:68}, we know that $u_{m,n}$ is in $H^{2+\alpha,1+\alpha/2}([\eta,L-\eta]\times [0,T-\epsilon-\eta])$ for any $\epsilon > 0$ and $\eta > 0$ and the norm estimates don't depend on $n$ and on $m$, but only on the H\"older norm of $f_{n,m}$ on $[\eta,L-\eta]\times [0,T-\epsilon-\eta]$ and the upper bound on $u_{m,n}$. Thus the same property holds for $u$. In other words $u$ is a classical solution on $(0,L)\times [0,T)$. This regularization argument can be iterated in order to obtain that $u$ is $C^\infty$.
\end{proof}

\begin{lemma}\label{l:continuityexistence}
$u$ of \eqref{e:defu} is continuous on $\bar{D} \setminus \partial B.$
\end{lemma}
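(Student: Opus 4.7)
The plan is to work with the splitting $u=v_0-U_1$ from \eqref{e:split1} together with the bound $0 \le U_1 \le v_0$ of \eqref{e:boundU1}. Since $v_0$ is already continuous on $\bar D\setminus\partial B$ by Lemma \ref{l:v0}, it is enough to prove that $U_1$ is continuous on the same set, with limiting value $0$ on $\partial D\setminus B_0$. Continuity of $U_1$ inside $D$ is immediate from Lemma \ref{l:regularityexistence} combined with \eqref{e:split1}, and continuity at a point $(x_0,T)\in B$ with $x_0\in(0,L)$ is a direct squeeze: $0\le U_1\le v_0$ and $v_0(x_n,t_n)\to 0$ at such points by Lemma \ref{l:v0}.

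The hard step is continuity of $U_1$ on the lateral part $S=\{0,L\}\times[0,T)$, where the squeeze is useless because $v_0$ has the nonzero limit $y_{t_0}$ there. Fix $(x_0,t_0)$ with $x_0\in\{0,L\}$ and $t_0<T$, choose $\delta\in(0,T-t_0)$, and invoke the decomposition $U_1=U_2+U_4$ of \eqref{e:split2}. On the closed slab $[0,L]\times[0,T-\delta]$ the function $v_0$ is continuous and hence bounded by some $M_\delta<\infty$; by $u\le v_0$ the same bound controls $u$ and thus $u^q$ on that slab. Consequently, for $(x,t)$ close enough to $(x_0,t_0)$,
$$
U_2(x,t)\le M_\delta^q\, \mathbb{E}_{x,t}[\tau-t],\qquad U_4(x,t)\le M_\delta\, \mathbb{P}_{x,t}(\tau>T-\delta).
$$
The probabilistic input I would invoke to close the argument is the elementary identity $\mathbb{E}_{x,t}[\tau-t]=x(L-x)$ for Brownian exit from $(0,L)$, together with $\mathbb{P}_{x,t}(\tau>T-\delta)\to 0$ as $x\to\{0,L\}$ while $T-\delta-t$ stays bounded below; both facts follow from the explicit formulas \eqref{e:densitytau} and \eqref{e:densityWTconstrained}. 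The two right-hand sides therefore vanish as $(x,t)\to(x_0,t_0)$, which gives $U_1(x,t)\to 0$ as needed.

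Continuity at a point $(x_0,0)\in B_0$ with $x_0\in(0,L)$ is handled by the same splitting with an arbitrary $\delta\in(0,T)$: $U_4$ is already known to be continuous on $[0,L]\times[0,T-\delta)$ from the proof of Lemma \ref{l:regularityexistence}, and for $U_2$ two successive applications of dominated convergence inside the Duhamel representation \eqref{e:duhamel1}--\eqref{e:U3explicit} suffice, using joint continuity of $f_W$ in $(x,t)$ together with the uniform bound $u^q\le M_\delta^q$ on the slab. The only genuine obstacle in the whole proof is thus the lateral-boundary case, where one needs the quantitative statement that $\tau$ collapses onto the starting time as the starting point approaches $\{0,L\}$; with that in hand the rest is routine bookkeeping.
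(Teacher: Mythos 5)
Your proof is correct and follows the paper's own strategy almost step for step: the same splitting $u = v_0 - U_1$ from \eqref{e:split1}, the same squeeze $0 \le u \le v_0$ at points of $B$, and the same slab decomposition $U_1 = U_2 + U_4$ of \eqref{e:split2} to reduce the lateral boundary $S$ to showing $U_1 \to 0$ there. The one genuine difference is how that last estimate is closed. The paper argues softly: as $(x_n,t_n) \to (x,t) \in S$ the exit time $\tau$ collapses onto $t$ almost surely (the law-of-the-iterated-logarithm argument already used for $v_0$ in Lemma \ref{l:v0}), and then boundedness of $u^q$ and of $U_1$ on $[0,L]\times[0,T-\delta]$ together with bounded convergence kill $U_2$ and $U_4$. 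You instead use the quantitative inputs $\mathbb{E}_{x,t}[\tau - t] = x(L-x)$ and the Markov bound $\mathbb{P}_{x,t}(\tau > T-\delta) \le x(L-x)/(T-\delta-t)$, which give explicit rates, are uniform in $t$ away from $T-\delta$, and avoid invoking the LIL altogether; this is a perfectly valid and arguably more elementary closing step. Two small remarks: the identity $\mathbb{E}_{x,t}[\tau - t] = x(L-x)$ is most naturally obtained by optional stopping applied to $W_s^2 - s$ (equivalently, by solving $\tfrac12 h'' = -1$, $h(0)=h(L)=0$), rather than ``from the explicit formulas'' \eqref{e:densitytau} and \eqref{e:densityWTconstrained} as you state --- the densities would do it, but only after an unnecessary computation; and you are in fact slightly more careful than the paper in explicitly treating the bottom boundary $B_0$, which the paper's proof leaves implicit in the regularity arguments.
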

\begin{proof}
Remember that $v_0$ is continuous on $B$ and takes the value $0$ there.
This and $0 \le u \le v_0$ imply the continuity of $u$ on $B$. 
By definition $u(x,t) = y_t = v_0(x,t)$ for $(x,t) \in S$.
We already know that $v_0$ is continuous on $S$. Furthermore,
by definition, $U_1 =0$ on $S$; these and \eqref{e:split1} imply
that it suffices to show 
\begin{equation}\label{e:contlast}
U_1(x_n,t_n) \rightarrow 0
\end{equation}
 for
$\{(x_n,t_n) \in D$, with $(x_n,t_n)\rightarrow (x,t) \in S.$
For this, we will use \eqref{e:split2} with $\delta > 0$ satisfying
$t < T-\delta$ and the definitions of
$U_2$ and $U_4.$ As $(x_n,t_n)\rightarrow (x,t) \in S$, 
$\tau \rightarrow t.$ This and the boundedness of $u^q$ on
$[0,L] \times [0,T - \delta]$ implies $U_2(x_n,t_n) \rightarrow 0.$
Lastly, $\tau\rightarrow t$ implies that $\mathbf{1}_{\{\tau > T-\delta\}}$
converges to $0$ almost surely. This and the boundedness of $U_1$ on
over $[0,L] \times [0,T - \delta]$ imply $U_4(x_n,t_n)\rightarrow 0$.
These and \eqref{e:split2} establish \eqref{e:contlast}.
\end{proof}

We can now complete the proof of Proposition \ref{p:solvePDE}:
\begin{proof}[Proof of Proposition \ref{p:solvePDE}]
By construction $u$ satisfies \eqref{e:boundary1}. 
Lemma \ref{l:regularityexistence} says that the function $u$ is 
smooth on $D$.
Thus It\^o's formula and the representation formula in Lemma 
\ref{l:defu} imply that $u$ satisfies \eqref{e:PDEfirst}; 
the details of a parallel argument 
have already been given in Lemma \ref{l:regv0} and are omitted.
Lemma \ref{l:continuityexistence}
says that $u$ is continuous on $\bar{D}\setminus \partial B$; \eqref{e:contlast},
$v_0(x,t) = y_t$ on $S$ and \eqref{e:split1} imply that $u(x,t) = y_t$
on $S$; $u \le v_0$ and $v_0 = 0$ on $B$ imply $u=0$ on $B$. These imply
that $u$ satisfies the boundary condition \eqref{e:boundary1}.

Next we prove the uniqueness claim, i.e., if $0 \le u_1 \le v_0$ is any other solution of the PDE \eqref{e:PDE0} and
the boundary condition \eqref{e:boundary1}, continuous on $\bar{D} \setminus \partial B$ then $u_1=u$ must hold. 
Proceeding as in the proof of Lemma \ref{l:monotonicity} define $v=u_1 -u$ and
$R = (u_1^q - u_0^q)/(u_1 - u) \mathbf{1}_{u_1 \neq u} > 0$. Now $v$ satisfies the PDE \eqref{e:PDEmon},
$v|_{\partial D} = 0$, and is continuous on $\bar{D} \setminus \partial B.$
These and It\^o's formula imply
\[
v(x,t) = 
{\mathbb E}_{x,t}\left[
e^{-\int_t^{T-1/n} R(W_s,s)ds} v\left(W_{T-\frac{1}{n}}, T-\frac{1}{n}\right)
\mathbf{1}_{\{\tau > T-1/n\}} 
                \right].
\]
The above display, $R \ge 0$, $|v| \le |u-u_1| \le 2v_0$ and Jensen's inequality imply
\[
|v(x,t)| \le {\mathbb E}_{x,t}\left[ 2v_0\left(W_{T-\frac{1}{n}}, T-\frac{1}{n}\right)
\mathbf{1}_{\{\tau > T-1/n\}} 
\right].
\]
The expectation representation \eqref{e:defv0} of $v_0$
implies that the second expectation above converges to $0$ with $n$. This proves $v=0.$

\end{proof}

\subsection{Connection to the minimal super-solution}\label{ss:completion}
It remains to establish the connection
between the solution of the BSDE constructed above and
the minimal supersolution $(\Ymin,\Zmin)$ of the BSDE (\ref{e:SDE},\ref{e:terminalcondition}) .
\begin{proposition}\label{p:YeqYmin}
Let $Y$ and $Z$ be the solution of the BSDE 
(\ref{e:SDE},\ref{e:terminalcondition}) defined in \eqref{e:defYZ}
where for $u$ we take the solution of the PDE \eqref{e:PDEfirst}
and the boundary condition \eqref{e:boundary1} constructed in Proposition 
\ref{p:solvePDE}. Then $(Y,Z) = (\Ymin,\Zmin).$
\end{proposition}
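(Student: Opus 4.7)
The proof splits into two directions. The easy direction $Y^{\min} \le Y$ is immediate: Proposition \ref{p:BSDEsol} shows that $(Y,Z)$ from \eqref{e:defYZ} solves (\ref{e:SDE},\ref{e:terminalcondition}) with $Y$ continuous on $[0,T]$ and $Y_T = \xi_1$, so in particular $\liminf_{t \to T} Y_t = \xi_1$. Thus $(Y,Z)$ is a super-solution, and the minimality property \eqref{e:minprop} gives $Y^{\min}_t \le Y_t$ a.s.\ for every $t \in [0,T]$.

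For the reverse inequality $Y \le Y^{\min}$ the plan is to approximate $(Y,Z)$ from below by bounded-terminal BSDE solutions. Set $\xi^{(n)} \doteq y_{T-1/n}\mathbf{1}_{B(L/2,L/2)^c}$, which is bounded, and let $u_n \doteq \lim_{m\to\infty} u_{m,n}$ be the function from Lemma \ref{l:defu}. The first step is to verify that $u_n$ inherits the regularity and boundary behaviour required by Proposition \ref{p:BSDEsol}, namely $C^\infty$ in $x$, $C^1$ in $t$ on $D$ and continuous on $\bar D \setminus \partial B$, solving \eqref{e:PDEfirst} with $u_n|_S = y^{(n)}$ and $u_n(\cdot,T) \equiv 0$ on $(0,L)$. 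Then define
\begin{equation*}
Y^{(n)}_t \doteq \begin{cases} u_n(W_t,t), & t < \tau\wedge T,\\ y^{(n)}_t, & \tau \le t \le T, \end{cases}
\qquad
Z^{(n)}_t \doteq \begin{cases} \partial_x u_n(W_t,t), & t < \tau\wedge T,\\ 0, & \tau \le t \le T. \end{cases}
\end{equation*}
An It\^o calculation identical to that of Proposition \ref{p:BSDEsol} then shows that $(Y^{(n)},Z^{(n)})$ solves the BSDE (\ref{e:SDE},\ref{e:terminalcondition}) with bounded terminal $Y^{(n)}_T = \xi^{(n)}$.

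The minimal super-solution $Y^{\min}$ is constructed in \cite{popier2006backward} as the pointwise monotone limit of the unique bounded solutions $Y^{[k]}$ of the BSDE with truncated terminal $\xi_1 \wedge k$. For any $n$ and every $k \ge y_{T-1/n}$, $\xi^{(n)} \le \xi_1 \wedge k$ pointwise, so the classical comparison theorem for BSDEs with monotone generator $f(y) = -y|y|^{q-1}$ and bounded terminal values yields $Y^{(n)}_t \le Y^{[k]}_t$ a.s.; sending $k \to \infty$ gives $Y^{(n)}_t \le Y^{\min}_t$. Finally, the monotonicity in Lemma \ref{l:defu} gives $u_n \nearrow u$ pointwise and $y^{(n)}_t \nearrow y_t$ for $t < T$, hence $Y^{(n)}_t \nearrow Y_t$; passing to the limit yields $Y_t \le Y^{\min}_t$. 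Combined with the first step, $Y = Y^{\min}$. Equating the stochastic integrals in the two It\^o decompositions of $Y = Y^{\min}$ then gives $Z = Z^{\min}$ by uniqueness of the martingale representation.

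The main anticipated obstacle is the regularity step: transferring interior smoothness and boundary continuity from the $u_{m,n}$ to the pointwise decreasing limit $u_n$, rather than the comparison/limit bookkeeping above. This can be achieved by adapting the analytic bootstrap from the second proof of Lemma \ref{l:regularityexistence} — the H\"older/Schauder estimates of \cite{lady:solo:ural:68} used there are uniform in $m$ (since $0 \le u_{m,n} \le y^{(n)}$ with $y^{(n)}$ bounded on $[0,T]$) and therefore survive the limit $m \to \infty$ — together with the domination argument of Lemma \ref{l:continuityexistence}, where the bounded profile $y^{(n)}$ takes the role played by $y$.
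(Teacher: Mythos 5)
Your proposal is correct and follows essentially the same route as the paper: the easy direction via the minimality property \eqref{e:minprop}, and the converse by defining $(Y^n,Z^n)$ from $u_n$ (with boundary data $y^{(n)}$), comparing with the truncated-terminal solutions whose increasing limit defines $\Ymin$ in \cite{popier2006backward}, and letting $u_n \nearrow u$. Your added details --- the explicit $\xi^{(n)} \le \xi_1 \wedge k$ comparison bookkeeping, the uniform-in-$m$ Schauder estimates for transferring regularity to $u_n$, and the martingale-representation argument for $Z = \Zmin$ --- are just fleshed-out versions of steps the paper dispatches by reference to its earlier lemmas.
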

\begin{proof}
It follows from \eqref{e:SDE} and the definition of the It\^o integral 
that it suffices to prove $Y = \Ymin.$
The inequality $Y \ge \Ymin$ follows from the minimality property 
\eqref{e:minprop} of $\Ymin.$

To finish the proof we simply need to prove the converse inequality. Recall that $u$ is defined as the limit of a sequence of functions 
$u_n$ (see the proof of Lemma \ref{l:defu}). Using the same ideas as used in the proofs of Lemma \ref{l:regularityexistence}
one can prove that for any $n$, the function $u_n$ is smooth and is a classical solution of the PDE 
\eqref{e:PDEfirst} in $D$.
Moreover, as in the case of $u$, $u_n$ is continuous on $\overline{D} \setminus \partial B$ and satisfies the boundary condition:
\begin{equation*}
u_n(0,t)  = u_n(L,t) = y^{(n)}_t = y_{t-1/n}, t \in [0,T],~~
u_n(x,T) = 0, 0  < x < L.
\end{equation*}
Define 
\begin{equation*}
Y^n_t \doteq \begin{cases} u_n(W_t,t)&, t < \tau \wedge T,\\
			 y^{(n)}_t&, \tau \le t \le T,
	  \end{cases}~~~
Z^n_t \doteq \begin{cases} \partial_x u_n(W_t,t)&, t < \tau \wedge T,\\
			 0&, \tau \le t \le T.
	  \end{cases}
\end{equation*}
Straightforward modifications of previous arguments show that $(Y^n,Z^n)$ solves the BSDE \eqref{e:SDE} with terminal condition
$$\xi^n = y^{(n)}_T \mathbf{1}_{B(L/2,L/2)^c} = \left( \frac{n}{q-1}\right)^{\frac{1}{q-1}}\mathbf{1}_{B(L/2,L/2)^c}.$$
Since the minimal solution $(\Ymin,\Zmin)$ constructed in \cite{popier2006backward} is the increasing limit of solutions of the same BSDE but with terminal condition $\xi \wedge m$ (as $m$ goes to $+\infty$), 
the comparison principle implies 
 \[Y^n_t \leq \Ymin_t\]
a.s.  for any $t\in [0,T]$.
But $u_n$ converges to $u$ and from \eqref{e:defYZ} we obtain the desired inequality. 

\end{proof}

\subsection{Numerical examples} \label{ss:numerical}

Let us give several numerical examples for the PDE solutions constructed
above and the resulting solution $Y$ of the BSDE.
The left side of 
Figure \ref{f:u1} shows the graph of $u_{m,n}$ with $L=3$ and $T=1$,
$m=100$ and $n=50$ computed using a finite difference approximation
of the PDE with $\Delta x = 0.1$ and $\Delta t = 0.01.$
The right side of the same figure shows the graph of $u_{m,n}$ over the
line $x =  L/2 = 1.5$ for $m=100$ and $n=10$ and $n=150$ as well as the
graph of $y_t$; note $u_{100,10}(1.5,t) < u_{100,1000}(1.5,t) < y_t$ in the figure,
as expected.
Figure \ref{f:Y1} shows two randomly sampled sample paths of
the Brownian motion $W$ with $W_0 = L/2 = 3/2$
and the corresponding path for $Y$, computed
using \eqref{e:defYZ} where we use a numerical approximation
of $u_{m,n}$ with $m=100$ and $n=1000$ to approximate $u$.

\begin{figure}
\begin{center}
\includegraphics[width=0.5\textwidth]{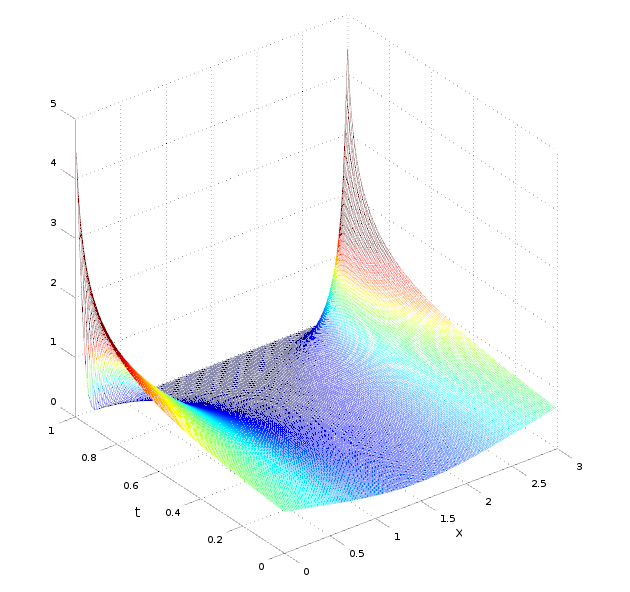}\includegraphics[width=0.5\textwidth]{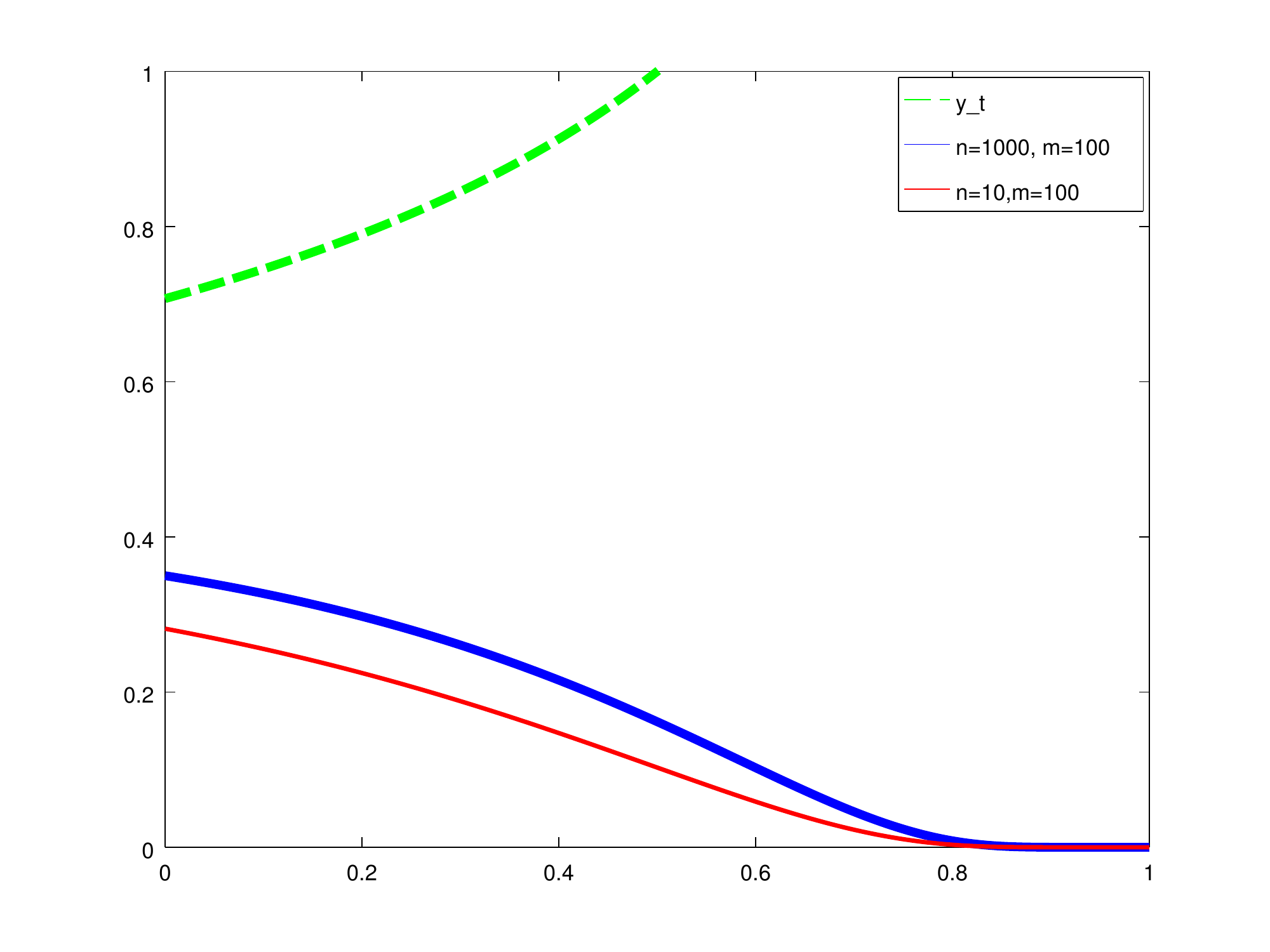}
\end{center}
\caption{\hspace{0.25cm}On the left, the graph of
$u_{m,n}$ with $m=100$ and $n=50$; on the right, the graph of $u_{m,n}$
over $x=1.5$ for $m=100$, $n=10$ (thin) and $n=1000$ (thick),
and $y_t$ (dashed line). In all computations $L=3$ and $T=1$ }
\label{f:u1}
\end{figure}
\begin{figure}
\begin{center}
\includegraphics[width=0.5\textwidth]{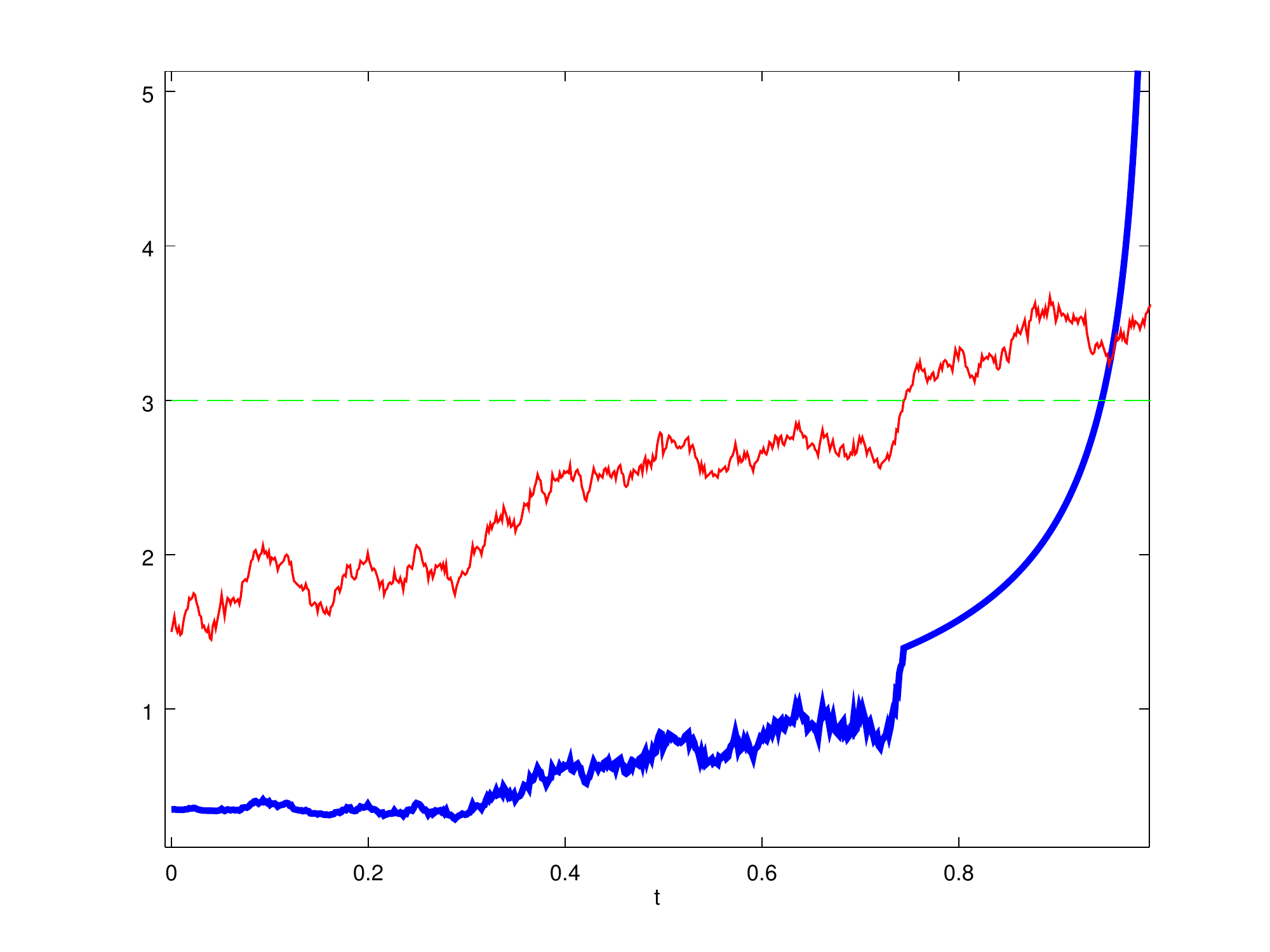}\includegraphics[width=0.5\textwidth]{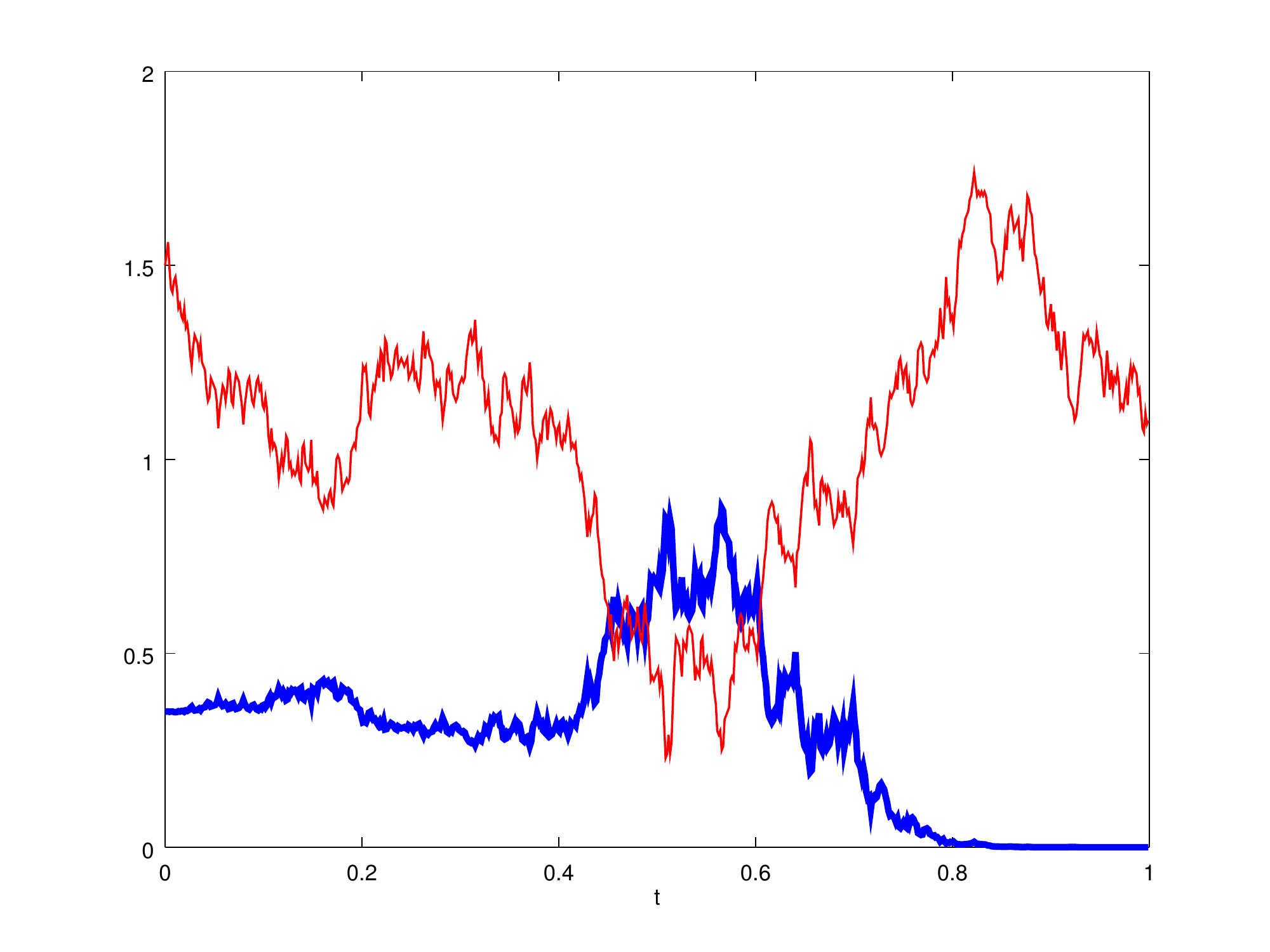}
\end{center}
\caption{Numerically computed trajectories of $W$ (thin
light path) and $Y$  (thick dark)
(left with explosion, right without); $Y$ is computed
using \eqref{e:defYZ} with $u_{m,n}$ approximating $u$
with $m=100$ and $n=1000$; $L=3$ and $T=1$
 \label{f:Y1}}
\end{figure}

\section{The case $\xi = \infty \cdot \mathbf{1}_{B(m,r)}$}\label{s:secondtermcond}

Our results for the  terminal condition $\xi = \infty \cdot \mathbf{1}_{B(L/2,L/2)}$ parallel those
for
$\xi = \infty \cdot \mathbf{1}_{B(L/2,L/2)^c}$, 
with two {differences: 
1) we need to replace the
upperbound $v_0$ of subsection \ref{ss:heat} 
with a new upperbound $\bar{u}$} and
2) $q>1$ is enough
for the existence of solutions;
see the discussion below
for more on these changes.
\begin{theorem} \label{thm:second_case}
Suppose $q >1.$
There exists a function $\bar{v}$ which is $C^\infty$ in the $x$ variable 
and $C^1$ in the $t$ variable
and continuous on $\bar{D} \setminus \{(L,T),(0,T)\}$ and which solves the PDE \eqref{e:PDEfirst} with the boundary condition 
\eqref{e:boundary2} (i.e., \eqref{e:boundary2r} below)) such that
{
\begin{enumerate}
\item 
The processes
\begin{equation}\label{e:defYZ2}
Y_t = \begin{cases} \bar{v}(W_t,t)&, t < \tau \wedge T,\\
			 0&, \tau \le t \le T,
	  \end{cases}~~~
Z_t = \begin{cases} \bar{v}_x(W_t,t)&, t < \tau \wedge T,\\
			 0&, \tau \le t \le T.
	  \end{cases}
\end{equation}
solve the BSDE (\ref{e:SDE}, \ref{e:terminalcondition}) 
with
$\xi = \infty \cdot \mathbf{1}_{B(L/2,L/2)}$, and 
in particular, $Y$ is continuous on $[0,T]$,
\item 
$(\Ymin,\Zmin)= (Y,Z)$ and \eqref{e:cont_terminalcondition} hold.
\end{enumerate}
}
\end{theorem}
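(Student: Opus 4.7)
The plan is to mirror the argument of Section~\ref{s:first_case}, replacing the upper bound $v_0$ with one constructed directly from the PDE, and attending to the two differences flagged in the discussion preceding the theorem. First, one proves an analog of Proposition~\ref{p:BSDEsol}: for any function $\bar v$ with the regularity claimed and solving \eqref{e:PDEfirst} together with the boundary data \eqref{e:boundary2} in the classical sense, the pair $(Y,Z)$ of \eqref{e:defYZ2} satisfies the BSDE with $\xi=\xi_2$. The verification is a direct adaptation: on $\{\tau<T\}$ continuity of $Y$ at $\tau$ follows from $\bar v(0,\cdot)=\bar v(L,\cdot)=0$ matched with $Y_t=0$ for $t\ge\tau$, and $Y_T=0=\xi_2$ holds a.s.\ on this event because, after hitting an endpoint of $[0,L]$, the Brownian motion a.s.\ exits $[0,L]$ before time $T$; on $\{\tau>T\}$ the sample path stays in a compact subinterval of $(0,L)$, so It\^o's formula combined with \eqref{e:PDEfirst} yields \eqref{e:SDE} on any $[s,t]\subset[0,T)$, and $Y_T=\infty=\xi_2$ follows from the blow-up $\bar v(x,t)\to\infty$ as $t\uparrow T$ for $x\in(0,L)$.

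The main construction is that of $\bar v$ itself. Since the linear heat equation with boundary data \eqref{e:boundary2} has no finite solution, $v_0$ is unavailable. One approximates the infinite terminal datum by a smooth, increasing sequence $\phi_n\colon[0,L]\to[0,\infty)$ with $\phi_n(0)=\phi_n(L)=0$ and $\phi_n\nearrow\infty$ on $(0,L)$, and invokes \cite[Theorem~9, p.~205]{friedman} exactly as in Lemma~\ref{l:defu} to obtain nonnegative classical solutions $\bar v_n$ of \eqref{e:PDEfirst} with representation
\begin{equation*}
\bar v_n(x,t)=\mathbb E_{x,t}\left[e^{-\int_t^{\tau\wedge T}\bar v_n^{q-1}(W_s,s)\,ds}\,\phi_n(W_T)\,\mathbf 1_{\{\tau>T\}}\right].
\end{equation*}
By Lemma~\ref{l:monotonicity} the sequence $\bar v_n$ is increasing in $n$, and the spatially constant function $y_t=((q-1)(T-t))^{1-p}$ trivially satisfies \eqref{e:PDEfirst} (since $\partial^2_{xx}y_t=0$ and $y$ solves \eqref{e:ode0}) and dominates the boundary data of every $\bar v_n$, so $\bar v_n\le y_t$ uniformly in $n$. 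The weaker assumption $q>1$ suffices precisely because $y_t$ is finite on $[0,T)$ for any $q>1$, while the stronger $q>2$ was needed in Section~\ref{s:first_case} only to ensure the integrability \eqref{e:int_trivialsol} underlying the analysis of $v_0$. Hence $\bar v\doteq\lim_n\bar v_n$ is well defined, satisfies $0\le\bar v\le y_t$ on $D$, and inherits the expected-cost representation in the limit by dominated convergence.

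The principal obstacle is the continuity of $\bar v$ on the lateral boundary $S$, since $y_t$ does not vanish on $S$. The new step is the construction of a separate supersolution $\bar u$ of \eqref{e:PDEfirst} that is finite on $D$, vanishes on $S$, and dominates every $\bar v_n$; this is where the damping from the reaction term $-V^q$ is essential, and why the assumption $q>1$ is needed. One viable route is to apply the same approximation scheme to \eqref{e:PDEfirst} on a half-strip $(0,\infty)\times(0,T)$ with zero Dirichlet data at $x=0$ and terminal data $\infty\cdot\mathbf 1_{(0,L)}$, globally controlled from above by $y_t$, and combine it via pointwise minimum with the symmetric construction near $x=L$; alternatively, one exhibits an explicit self-similar supersolution vanishing at the endpoints. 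With such $\bar u$ in hand, $\bar v\le\bar u$ together with $\bar u(x,t)\to 0$ on $S$ yields continuity of $\bar v$ on $S$; the terminal blow-up $\bar v(x,t)\to\infty$ as $t\uparrow T$ for $x\in(0,L)$ follows from $\bar v\ge\bar v_n$ and $\bar v_n(x,T)=\phi_n(x)\nearrow\infty$ combined with continuity of each $\bar v_n$ up to $B$. Interior regularity of $\bar v$ and the verification that it is a classical solution of \eqref{e:PDEfirst} then carry over from Lemmas~\ref{l:regularityexistence}, \ref{l:continuityexistence} and the proof of Proposition~\ref{p:solvePDE}, using either Duhamel with the density $f_W$ or the parabolic bootstrap of \cite{lady:solo:ural:68}.

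The identification $(Y,Z)=(\Ymin,\Zmin)$, and hence \eqref{e:cont_terminalcondition} for $\xi=\xi_2$, mirrors Proposition~\ref{p:YeqYmin}: each $\bar v_n$ yields via the analog of \eqref{e:defYZ2} a BSDE solution $(Y^n,Z^n)$ with bounded terminal condition $\phi_n(W_T)\mathbf 1_{B(L/2,L/2)}$ approximating $\xi_2$ from below; the monotone truncation construction of $\Ymin$ from \cite{popier2006backward} together with the comparison principle for BSDE with monotone driver yields $Y^n\le\Ymin$, and passing to the limit in $n$ gives $Y\le\Ymin$, while the reverse inequality is the defining minimality property \eqref{e:minprop}.
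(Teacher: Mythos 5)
Your overall architecture is the right one and coincides with the paper's: reduce to the PDE \eqref{e:PDEfirst} with boundary data \eqref{e:boundary2r}, build $\bar v$ as an increasing limit of classical solutions with bounded terminal data, recognize that the lateral-boundary continuity of this increasing limit is the principal obstacle requiring a new dominating function, and close with the analogues of Propositions \ref{p:BSDEsol} and \ref{p:YeqYmin}. Your BSDE verification on $\{\tau<T\}$ and $\{\tau>T\}$ and your identification $(Y,Z)=(\Ymin,\Zmin)$ via $\bar v_n\nearrow\bar v$, truncation of $\xi$, and the comparison principle are exactly the paper's arguments. One small repair along the way: you invoke Lemma \ref{l:monotonicity} to compare $\bar v_n$ with $y_t$, but that lemma is stated for \emph{bounded} solutions and $y_t$ blows up at $t=T$; one should compare instead with a bounded shifted solution (the paper uses $y_{t-1/n}$, you could use $y_{t-\delta}$ with $y_{T-\delta}\ge\max\phi_n$ and then let $\delta\downarrow 0$).

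The genuine gap is the construction of the dominating function $\bar u$ -- precisely the one new ingredient this theorem requires relative to Section \ref{s:first_case}, as the paper itself emphasizes. Your route (a) is circular: an increasing approximation on the half-strip $(0,\infty)\times(0,T)$ with zero Dirichlet data at $x=0$ and terminal datum $\infty\cdot\mathbf{1}_{(0,L)}$ has exactly the same corner singularity at $(0,T)$, so proving that \emph{its} limit vanishes continuously on $\{x=0\}$ is the very problem you are trying to solve; nothing about the half-strip makes that easier. Your route (b) -- a self-similar supersolution $(T-t)^{-1/(q-1)}F\bigl(x/\sqrt{T-t}\bigr)$ vanishing at the endpoint -- can in fact be made to work (for instance $F(\eta)=A\bigl(1-e^{-\lambda\eta}\bigr)$ with $A^{q-1}$ and $\lambda$ large satisfies $\frac{F}{q-1}+\frac{\eta F'}{2}+\frac{F''}{2}-F^q\le 0$), but you neither exhibit $F$ nor verify the supersolution inequality, nor justify the comparison with each $\bar v_n$ up to the singular time $T$ (which needs an exhaustion argument near the corner); that verification is the actual technical content of the step, not a remark. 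The paper proceeds differently and avoids any explicit barrier: it defines $\bar u_n$ as the classical solution of \eqref{e:PDEfirst} on the \emph{time-truncated} domain $[0,L]\times[0,T-1/n]$ with zero lateral data and the finite constant terminal datum $y_{T-2/n}$ (with representations \eqref{e:def_u_bar}--\eqref{e:equiv_def_u_bar}), shows by comparison with shifted ODE solutions that the family is monotone and bounded by $y_t$, and sets $\bar u=\lim_n\bar u_n$; since each $\bar u_n$ is continuous up to the lateral boundary and vanishes there, the sandwich $0\le\bar u\le\bar u_n$ yields lateral continuity of $\bar u$ for free, and then $\bar v_n\le\bar u_n$ gives $\bar v\le\bar u$ and hence lateral continuity of $\bar v$. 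Without either this construction or a fully verified explicit supersolution, your proof that $\bar v$ vanishes continuously on $S$ -- and hence that $Y$ is continuous at $\tau$ on $\{\tau<T\}$ -- does not go through.
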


The steps for the proof of Theorem \ref{thm:second_case} 
apply verbatim to the current case except for the construction
of the solution of the PDE; for this reason we only give an outline
and point out the necessary changes.
Breaking as we do in Section \ref{ssect:reduc} the BSDE 
into the intervals $[0,\tau \wedge T]$ and $[\tau \wedge T, T]$, this case can be reduced to the solution of the PDE \eqref{e:PDEfirst} now with the boundary condition
\begin{equation}\label{e:boundary2r}
V(0,t)  = V(L,t) = 0, t \in [0,T],~~
V(x,T) = \infty, 0  < x < L.
\end{equation}
The construction given in Section \ref{ssect:solvepde} for the
PDE \eqref{e:PDEfirst} and the  
boundary condition \eqref{e:boundary1} allow one to solve the same
PDE 
now with the boundary condition \eqref{e:boundary2r} except for the 
differences pointed out above:
in the present case we no longer have the upperbound $v_0$ 
to serve as an upperbound
in convergence and continuity arguments.
The role of $v_0$ will now be played
by the limit $\bar{u}$ of a decreasing sequence of solutions of
\eqref{e:PDEfirst}.
And because we no longer need $v_0$ we no longer need the assumption $q >2$
and can work with $q>1$.
The details are given in the outline below:
\begin{enumerate}
\item 
First proceed as in Section \ref{ssect:solvepde}, Lemma \ref{l:defu}, to construct a classical
solution $\bar{u}_n$  to \eqref{e:PDEfirst} on $[0,L]  \times [0, T-1/n]$ with 
the boundary condition
\begin{equation*}
V(0,t)  = V(L,t) = 0, t \in [0,T-1/n],~~
V(x,T-1/n) = y_{T-2/n},
\end{equation*}
 $0  < x < L$, continuous on $[0,L] \times [0,T-1/n] - \{(0,T-1/n),(L,T-1/n)\}$
satisfying
the expectation representations (of type \eqref{e:defu} and \eqref{e:defu_equiv}):
\begin{equation} \label{e:def_u_bar}
\bar{u}_n(x,t) = {\mathbb E}_{x,t}\left[e^{-\int_t^{T-1/n} \bar{u}_n^{q-1}(W_s,s)ds} y_{T-2/n}  
\mathbf{1}_{\{\tau \geq T-1/n\}}\right]
\end{equation}
or equivalently
\begin{equation}\label{e:equiv_def_u_bar}
\bar{u}_n(x,t) = {\mathbb E}_{x,t}\left[-\int_t^{\tau \wedge (T-1/n) } \bar{u}_n^q(W_s,s)ds +
y_{T-2/n}  \mathbf{1}_{\{\tau \geq T-1/n\}}  \right ],
\end{equation}
for $(x,t) \in [0,L]  \times [0, T-1/n]$. 
\item By Lemma \ref{l:monotonicity} for any $(x,t) \in [0,L]  \times [0, T-1/n]$, $\bar{u}_n(x,t) \leq y_t$. Hence for $n_1 < n_2$ and for any $x \in [0,L]$, $\bar u_{n_2}(x,T-1/n_1) \leq y_{T-2/n_1}$. Again by comparison principle
(Lemma \ref{l:monotonicity}), 
for any $(x,t) \in [0,L]  \times [0, T-1/n_1]$,  $\bar u_{n_2}(x,t) \leq \bar u_{n_1}(x,t)$. Then, for fixed $(x,t) \in [0,L]\times[0,T)$,
$(x,t)$ will be in the domain of $\bar u_n(x,t)$ for $ n \ge n_0$ for
some $n_0$ and the sequence $\{u_n(x,t), n \ge n_0\}$  will be decreasing
in $n$; call its limit $\bar{u}(x,t)$, i.e.,
\[\bar u_{n}(x,t) \searrow  \bar u(x,t), \quad n\to +\infty.\]
The representations \eqref{e:def_u_bar} and \eqref{e:equiv_def_u_bar} and the dominated convergence theorem
imply
\begin{align} \label{e:DPEforubar1}
\bar{u}(x,t) &= {\mathbb E}_{x,t}\left[e^{-\int_t^{\tau \wedge r} \bar{u}^{q-1}(W_s,s)ds} \bar u(W_r,r) \right],\\
\label{e:DPEforubar2}
\bar{u}(x,t) &= {\mathbb E}_{x,t}\left[-\int_t^{\tau \wedge r} \bar{u}^q(W_s,s)ds +
\bar u (W_r,r)  \right ],
\end{align}
for any $t < r < T$ and any $x\in [0,L]$.
Moreover we have 
$$\forall (x,t)\in [0,L]\times [0,T), \quad 0\leq \bar u(x,t) \leq y_t.$$
\item The sufficient differentiability of $\bar{u}$ is proved exactly as in the proofs of Lemma \ref{l:regularityexistence}.
This implies (via It\^o's formula) that $\bar{u}$ solves \eqref{e:PDEfirst}.
\item 
{Next we construct an increasing approximating sequence}
$\bar{v}_n$ by solving the PDE \eqref{e:PDEfirst} on $[0,L] \times [0, T]$ with the boundary condition
\begin{equation*}
V(0,t)  = V(L,t) = 0, t \in [0,T],~~
V(x,T) = y_{T-1/n}, 0  < x < L,
\end{equation*}
and continuous on  $\bar{D} \setminus \partial B.$
\item  Lemma \ref{l:monotonicity} implies  that the sequence $\bar{v}_n$ is increasing.
Define $\bar{v} \doteq \lim_{n\rightarrow \infty} \bar{v}_n.$ 
\item
Lemma \ref{l:monotonicity} and the fact that $(x,t) \mapsto y_{t-1/n}$ solves \eqref{e:PDEfirst}
 imply that $\bar v_n(\cdot,T-1/n) \leq y_{T-2/n}$, which, along with Lemma \ref{l:monotonicity} and
the definition of $\bar{u}_n$ imply
\[
\bar{v}_n \le \bar{u}_n,
\]
from which 
\begin{equation*}
\bar{v} \le \bar{u}
\end{equation*}
follows.
Arguments in subsection \ref{ssect:solvepde}
now applied to $\bar{v}$ (with $\bar{u}$ providing the dominating function)
imply that $\bar{v}$ has representations of the form \eqref{e:DPEforubar1}
 and \eqref{e:DPEforubar2}, is infinitely differentiable in the $x$ variable
and continuously differentiable in the $t$ variable  on $(0,L)\times (0,T)$ and that it solves \eqref{e:PDEfirst}.
\item The functions $\bar{v}$ and $\bar{u}$ 
both satisfy the boundary condition \eqref{e:boundary2r} by definition.
It remains to show that they are
continuous on $\partial D \setminus \partial B$.
The continuity of $\bar{u}$ on the lateral boundary $\{(0,t), (L,t), t < T \}$ follows from $0 \le \bar{u} \le \bar{u}_n$ and the 
continuity of $\bar{u}_n$ on the same boundary. $0 \le \bar{v} \le \bar{u}$
implies then the continuity of $\bar{v}$ on the lateral boundary.
The continuity of $\bar{v}$
along $B$ follows from the continuity of $\bar{v}_n$ along the same boundary and $\bar{v} \ge \bar{v}_n.$
This and $\bar{u} \ge \bar{v}$ finally imply the continuity of $\bar{u}$ along $B$.
\end{enumerate}
The above algorithm gives us two classical solutions $\bar{u},\bar{v}$ of the PDE \eqref{e:PDEfirst}
and the boundary condition \eqref{e:boundary2r}
satisfying $\bar{u} \ge \bar{v}.$
In \eqref{e:defYZ2} we use the smaller of these solutions to define our solution of the 
BSDE (\ref{e:SDE},\ref{e:terminalcondition}) with
$\xi = \infty \cdot \mathbf{1}_{B(m,r)}$.
That  $(Y,Z)$ thus defined satisfies 
(\ref{e:SDE},\ref{e:terminalcondition}) as well as the proof of continuity of $Y$ on $[0,T]$
proceed exactly as in the proof of Proposition \ref{p:BSDEsol}. The proof that $(Y,Z) = (\Ymin,\Zmin)$ proceeds
as in the proof of Proposition \ref{p:YeqYmin} given in subsection \ref{ss:completion} and follows from
$\bar{v}_n \nearrow \bar{v}.$

We illustrate the computations above with several numerical examples
in 
Figures \ref{f:u2} and \ref{fig:Y_sec_case}. The
left side of Figure \ref{f:u2} shows the graph of $\bar{u}_{50}$,
computed numerically using finite differences; the right
side of the same figure shows the graphs of $\bar{u}_{5}(1,t)$ and
$\bar{u}_{50}(1,t)$ and $y_t$. Figure \ref{fig:Y_sec_case} shows two sets
of sample paths of $W$ and $Y$ with $W_0 = L/2=1$ and
 where $Y$ is approximated by
$\bar{u}_{50}(W_t,t)$ for $t < \tau$; in all computations
$L=2$ and $T=1.$

\begin{figure}[h]
\begin{center}
\includegraphics[width=0.5\textwidth]{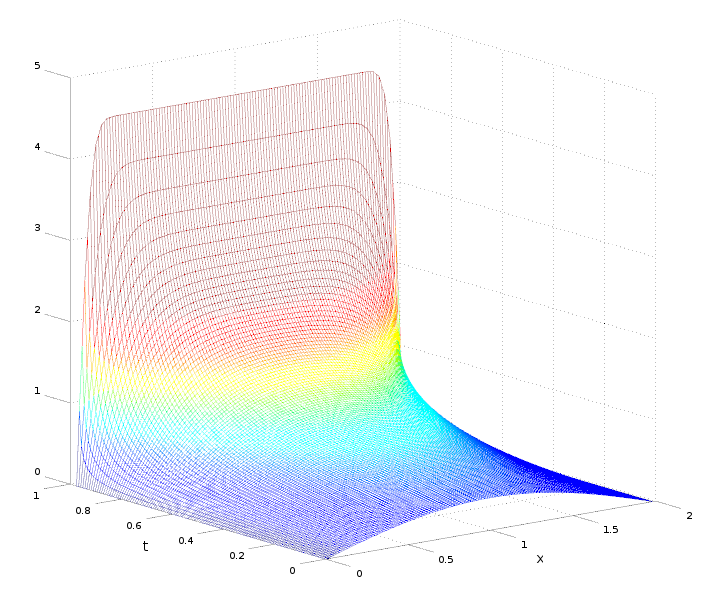}\includegraphics[height=5cm]{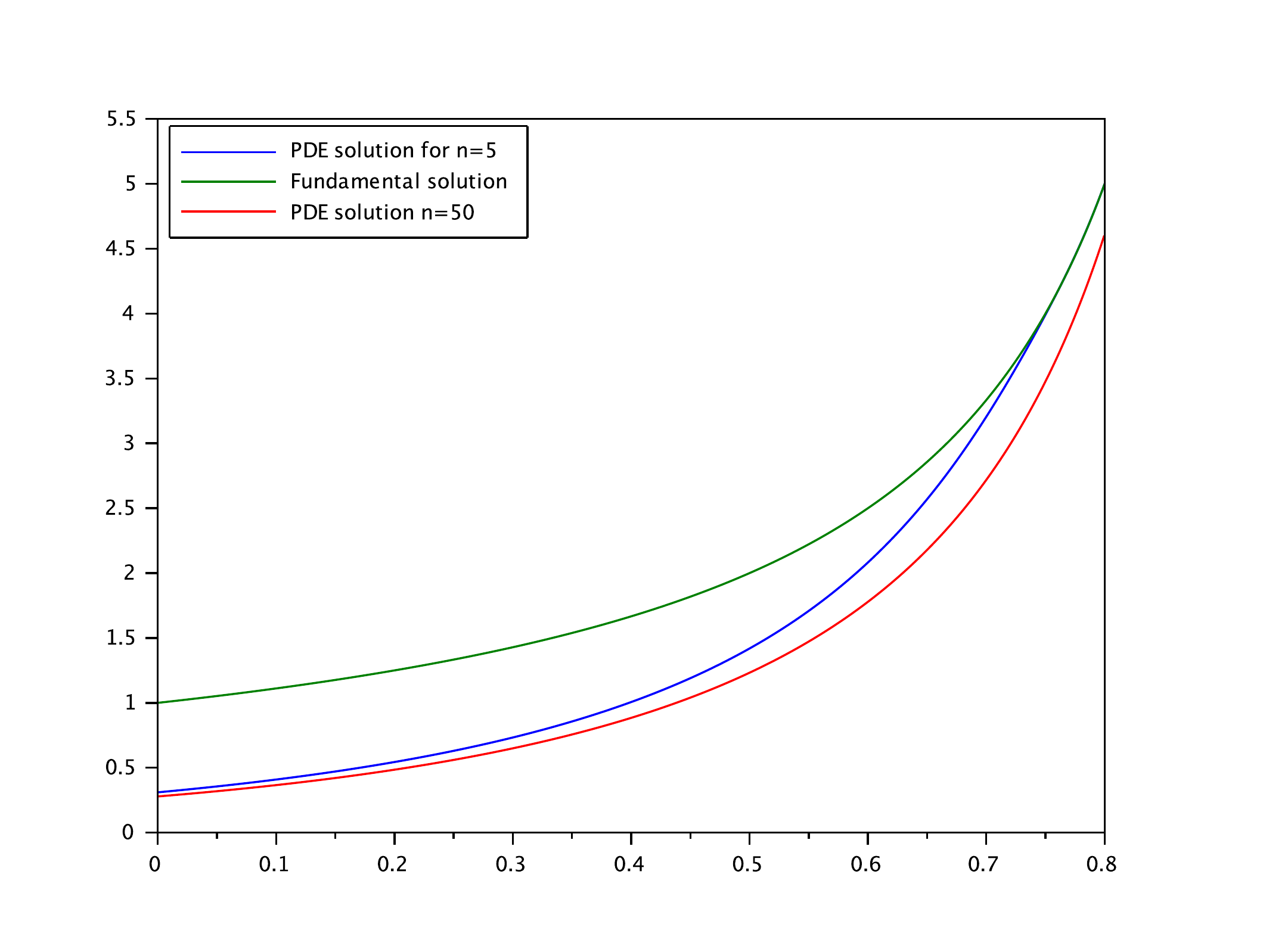}
\end{center}
\caption{\hspace{0.25cm} On the left, graph of $\bar{u}_{50}$;
on the rights graphs of $\bar{u}_5(1,t)$, $\bar{u}_{50}(1,t)$
and $y_t$, $t\in [0,1]$; $T=1$ and $L=2$ \label{f:u2}}
\end{figure}
\begin{figure}[h]
\begin{center}
\includegraphics[height=4.5cm]{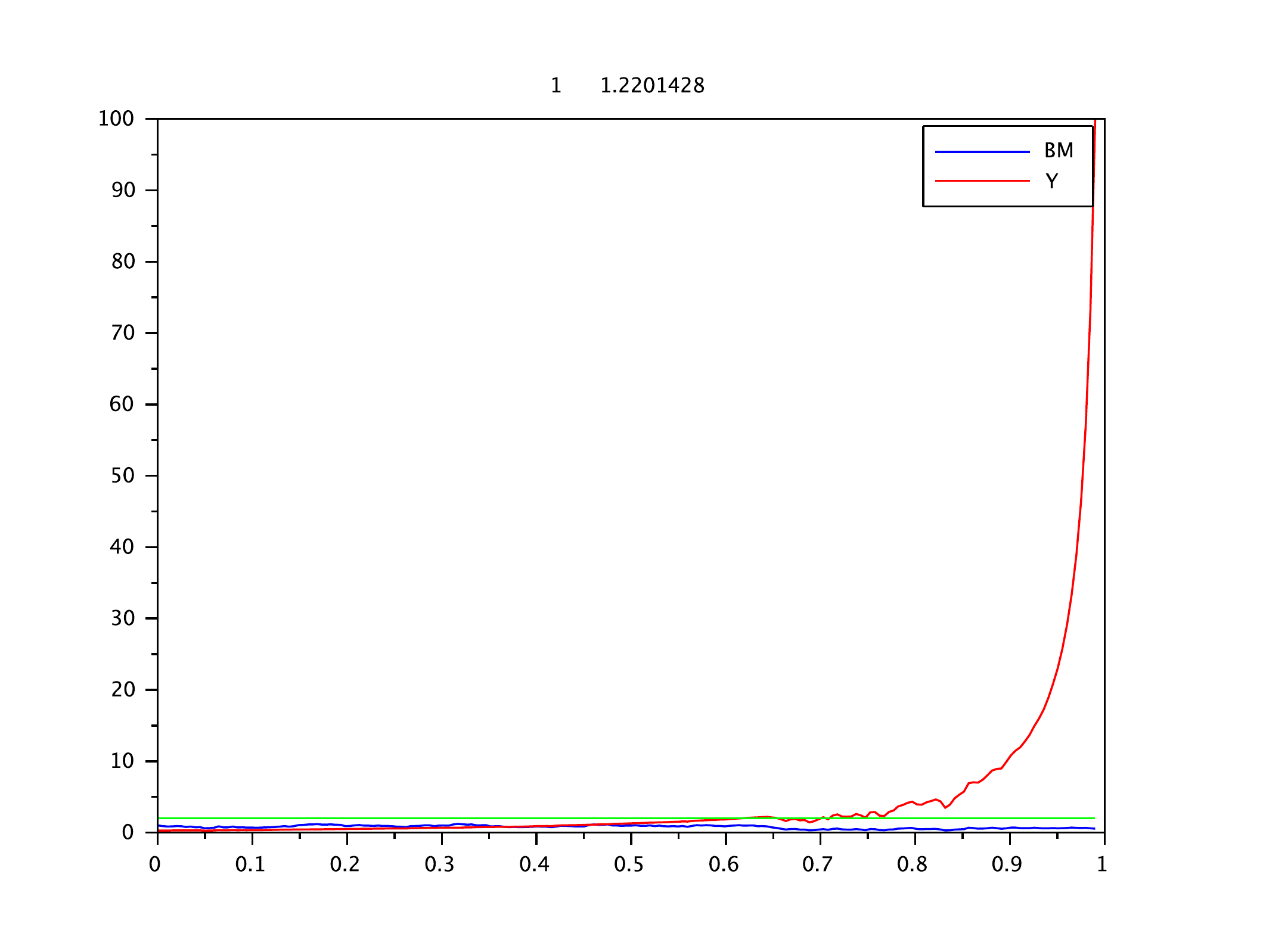}\includegraphics[height=4.5cm]{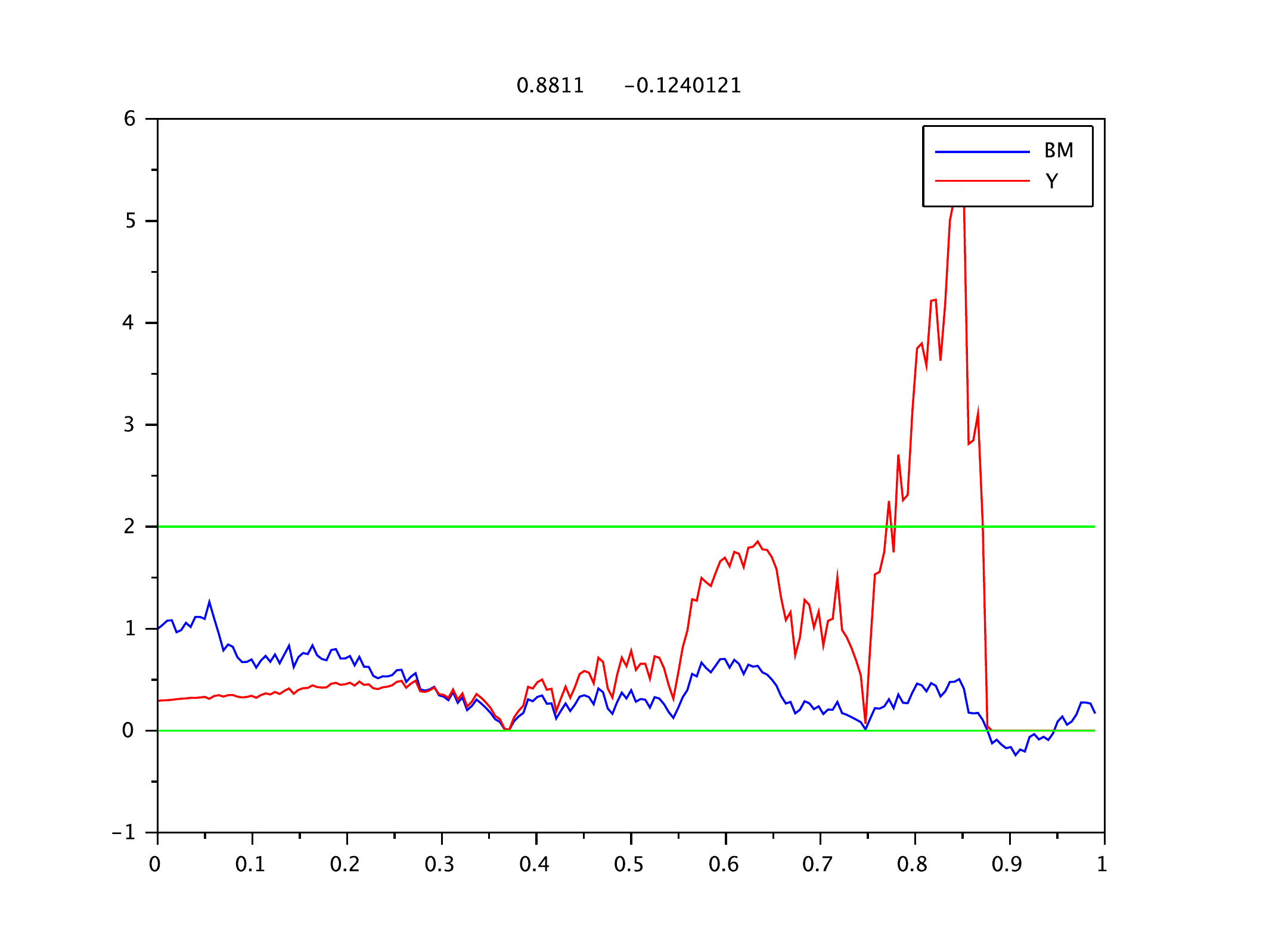}
\end{center}
\caption{\hspace{0.25cm}Two trajectories of $W$ and $Y$ (left with explosion, right without). \label{fig:Y_sec_case}}
\end{figure}

\section{The Control Interpretation}\label{s:control}

We next point out a control interpretation of the BSDE
(\ref{e:SDE},\ref{e:terminalcondition}) or more precisely
of the BSDEs (\ref{e:SDE},\ref{e:cont_terminalcondition})
and 
(\ref{e:SDE},\ref{e:upperbound}).
We consider the case of a general $\mathscr F_T$-measurable terminal condition $\xi$ possibly taking the value $+\infty$ with positive probability. We use this connection to a stochastic control problem to derive a sufficient (Lemma \ref{l:probreduction}) and a necessary (Lemma \ref{l:neccond}) condition for the continuity $\lim_{t\to T}\Ymin_t=\xi$ of $\Ymin$ at $T$.
Finally, we apply our findings from Sections \ref{s:first_case} and \ref{s:secondtermcond} to derive estimates about the limiting behavior of conditional probabilities $\mathbb P_t[A]$ as $t\to T$ 
for
$A = B(m,r)^c \in {\mathscr F}_T$
and
$A = B(m,r) \in {\mathscr F}_T$
(Corollary \ref{c:limitcondprob}).

Let us assume $p> 1$ and as before
$q$ denotes its H\"older conjugate; 
for an arbitrary  $ \xi \in {\mathscr F}_T$, $\xi \ge 0$,
consider the stochastic optimal control problem
\begin{align}\label{e:control0}
V(c,t,\omega) &\doteq \essinf_{\alpha \in \mathcal A(t,c)} {\mathbb E} \left[
(p-1)^{p-1}\int_t^T |\alpha_s|^{p}ds + |C^\alpha_T|^{p} \xi \bigg| {\mathscr F}_t \right],\\
C^\alpha_u & \doteq c +\int_t^u \alpha_s ds, u\in [t,T], \notag 
\end{align}
$ t\in [0,T]$, $c\in {\mathbb R}$,
where the set of admissible controls $\mathcal A(t,c)$ consists of all progressively measurable processes $\alpha$ such that $\alpha \in L^1(t,T)$ $\mathbb P$-a.s
and we assume $0 \cdot \infty = 0$. From the verification Theorems \cite[Theorem 1.3]{ankirchner2014bsdes} or \cite[Theorem 3]{krus:popi:16}, we know that 
\begin{equation} \label{e:link_control_BSDE}
V(c,t,\omega) = |c|^p \Ymin_t
\end{equation}
where $\Ymin$ is the minimal super-solution of the BSDE 
(\ref{e:SDE}, \ref{e:terminalcondition})  constructed in these works%
\footnote{
In \cite{krus:popi:16} only the weak terminal constraint $\liminf_{t\to T}\Ymin_t\ge \xi$ was established. In the present setting, it follows from the results in \cite{popier2006backward} that actually the limit $\lim_{t\to T}\Ymin_t$ exists and consequently $\lim_{t\to T}\Ymin_t\ge \xi$ holds.}. 
In particular,
\begin{equation*}
\lim_{t\rightarrow T} V(c,t,\omega) \ge |c|^p \xi(\omega)
\end{equation*}
holds $\mathbb P$-a.s.
Moreover, an optimal control for \eqref{e:control0} is given by $\alpha_s^* = - (q-1) C^{\alpha^*}_s |Y_s|^{q-1}$,
and thus
\[
C^{\alpha^*}_u = c \exp \left[ -(q-1) \int_t^u \left( Y_s \right)^{q-1} ds\right],
\]
for $t \leq u \leq T$.
The link \eqref{e:link_control_BSDE} between the value function $V$ 
and $\Ymin$ will give two results concerning the continuity of $\Ymin$ in a general setting. First, 
we show $|\lim_{t\to T}\Ymin_t| < \infty$ already implies continuity.

\begin{lemma}\label{l:probreduction}
Let $\Ymin$ be the minimal super-solution of the BSDE 
{\rm (\ref{e:SDE},\ref{e:terminalcondition})}.
 Then the following implication holds for almost all $\omega \in \Omega$. If 
\begin{equation*} 
\lim_{t\to T}\Ymin_t(\omega)<\infty,
\end{equation*}
 then the path $\Ymin(\omega)$ is continuous at $T$, i.e.\ it holds that $\lim_{t\to T}\Ymin_t(\omega)=\xi(\omega)$.
\end{lemma}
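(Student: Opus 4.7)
The plan is to combine the control representation \eqref{e:link_control_BSDE} with a truncation of $\xi$ and a localization. Since $\Ymin$ is continuous on $[0,T)$ (by \cite{popier2006backward}) and is a submartingale on that interval (reading off the BSDE one finds $d\Ymin_t = \Ymin_t^q\,dt - \Zmin_t\,dW_t$, which has nonnegative drift), the limit $M := \lim_{t\to T}\Ymin_t$ exists a.s.\ in $[0,\infty]$. Combined with the supersolution inequality $\liminf_{t\to T}\Ymin_t \ge \xi$, this yields $\xi \le M$ almost surely, and in particular $\{M<\infty\} \subseteq \{\xi<\infty\}$ up to null sets. The task thus reduces to proving the reverse inequality $M\le\xi$ almost surely on $\{M<\infty\}$.

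The first step I would carry out is to record the elementary bound produced by feeding the trivial admissible control $\alpha\equiv 0$ into \eqref{e:control0} with $c=1$: then $C^{0}_T = 1$, the running cost vanishes, and via \eqref{e:link_control_BSDE} one obtains $\Ymin_t \le \mathbb{E}[\xi \mid \mathscr F_t]$ a.s.\ for each $t\in [0,T)$, and hence simultaneously for all such $t$ by a countable-dense argument combined with the continuity of $\Ymin$ on $[0,T)$. The second step is a truncation: for each $k\in\mathbb N$, let $(Y^{(k)},Z^{(k)})$ denote the unique continuous solution of \eqref{e:SDE} with bounded terminal $\xi\wedge k$. By the construction in \cite{popier2006backward}, $Y^{(k)}\uparrow\Ymin$ pointwise as $k\to\infty$, each $Y^{(k)}$ is continuous at $T$ with $Y^{(k)}_T = \xi\wedge k$, and $0 \le Y^{(k)} \le k$ is uniformly bounded. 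Using the corresponding feedback $\alpha^{(k)}_s = -(q-1)C^{(k)}_s (Y^{(k)}_s)^{q-1}$ (optimal for the $(\xi\wedge k)$-truncated control problem) as an admissible control in the original problem then gives the comparison
\[
0 \le \Ymin_t - Y^{(k)}_t \le \mathbb{E}\!\left[|C^{(k)}_T|^p\,(\xi - \xi\wedge k)\,\bigm|\,\mathscr F_t\right] \le \mathbb{E}\!\left[(\xi-k)^+\,\bigm|\,\mathscr F_t\right].
\]

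The conclusion would come by localizing through the stopping time $\tau_N := \inf\{t\ge 0: \Ymin_t \ge N\}\wedge T$; by continuity of $\Ymin$ on $[0,T)$, one has $\tau_N = T$ exactly on $\{M\le N\}$. On this event $\Ymin$ is bounded by $N$ throughout $[0,T)$, so $\xi\le M\le N$ and hence $(\xi-k)^+ = 0$ for every $k\ge N$. The bounded-terminal BSDE theory applied on $[0,\tau_N]$, together with martingale convergence for conditional expectations of bounded random variables, then lets us pass to the limit $t\to T$ in the displayed comparison, yielding $M - \xi \le 0$ almost surely on $\{M\le N\}$; letting $N\to\infty$ covers all of $\{M<\infty\}$. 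The main obstacle is exactly this last step: the naive bound $\Ymin_t\le \mathbb E[\xi\mid\mathscr F_t]$ is globally vacuous whenever $\mathbb P(\xi=\infty)>0$, because the right-hand side is then almost surely infinite for $t<T$; the role of $\tau_N$ is precisely to restrict attention to a sub-event of $\{M<\infty\}\subseteq\{\xi<\infty\}$ on which $\Ymin$ and the effective terminal are both bounded and the standard machinery applies.
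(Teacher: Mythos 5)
Your setup steps are individually correct: the bound $\Ymin_t\le \mathbb{E}[\xi\mid{\mathscr F}_t]$ from the control $\alpha\equiv 0$, the truncation $Y^{(k)}\uparrow\Ymin$ with terminal values $\xi\wedge k$ (this is indeed how $\Ymin$ is built in \cite{popier2006backward}), and the comparison
\begin{equation*}
0\le \Ymin_t-Y^{(k)}_t\le \mathbb{E}\bigl[\,|C^{(k)}_T|^p(\xi-\xi\wedge k)\bigm|{\mathscr F}_t\bigr]
\end{equation*}
all hold. The genuine gap is in the final limiting step: this comparison is vacuous in precisely the situations the lemma is about, and the localization by $\tau_N$ cannot repair it. Since $0\le Y^{(k)}\le k$, the truncated optimal position satisfies $C^{(k)}_T\ge e^{-(q-1)k^{q-1}T}>0$ deterministically, so $|C^{(k)}_T|^p(\xi-\xi\wedge k)=+\infty$ on $\{\xi=\infty\}$, and the right-hand side above equals $+\infty$ at every $(t,\omega)$ with $\mathbb{P}_t[\xi=\infty](\omega)>0$. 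For the paper's own main example $\xi=\infty\cdot\mathbf{1}_{B(m,r)^c}$ one has $\mathbb{P}_t[\xi=\infty]>0$ a.s.\ for every $t<T$, so your bound reads $\Ymin_t-Y^{(k)}_t\le+\infty$ everywhere, including on $\{M<\infty\}$. The stopping time $\tau_N$ does not help, because $\{\tau_N=T\}$ is not ${\mathscr F}_t$-measurable for $t<T$: the conditional expectation evaluated on that event still integrates over continuations of the path on which $\xi=\infty$, and there is no ``bounded-terminal BSDE theory on $[0,\tau_N]$'' that converts the supersolution inequality $\liminf_{t\to T}\Ymin_t\ge\xi$ into the reverse inequality at $T$ --- that reverse inequality is exactly what is being proved, so the last step as written is circular. (Two smaller inaccuracies: $\{\tau_N=T\}=\{\Ymin_t<N\ \forall t<T\}$, not $\{M\le N\}$, though $\bigcup_N\{\tau_N=T\}\supseteq\{M<\infty\}$ still holds; and a nonnegative local submartingale need not converge, so existence of the limit $M$ should simply be quoted from \cite{popier2006backward}, as the paper does.)

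The paper's proof avoids this trap by using the optimal control of the \emph{untruncated} problem: with $C_t=\exp\bigl(-(q-1)\int_0^t(\Ymin_s)^{q-1}ds\bigr)$ one has the identity \eqref{e:controleq}, and finiteness of $\Ymin_0=\mathbb{E}[M_T]$ forces the total cost, in particular the terminal cost $\xi|C_T|^p$, to be integrable; equivalently, $C_T=0$ a.s.\ on $\{\xi=\infty\}$, so the weight $|C_T|^p$ kills the singularity that your weights $|C^{(k)}_T|^p$ provably cannot. The right side of \eqref{e:controleq} is then a uniformly integrable martingale (continuous, by the Brownian filtration) plus a continuous finite-variation part, so $\Ymin_tC_t^p\to\xi|C_T|^p$ a.s.\ as $t\to T$; dividing by $C_T^p$ on $\{C_T>0\}\supseteq\{\lim_{t\to T}\Ymin_t<\infty\}$ gives the claim. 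Any rescue of your truncation scheme would require replacing $|C^{(k)}_T|^p$ by weights vanishing on $\{\xi=\infty\}$, which is exactly what the untruncated optimal control supplies.
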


\begin{proof}
Let $C_t=\exp\left(- (q-1)\int_0^t (\Ymin_s)^{q-1}ds\right)$ (which is the optimal control in \eqref{e:control0} for $t=0$ and $c=1$). Since $\Ymin$ is nonnegative, it follows that $C$ is continuous at $T$: $\lim_{t\to T}C_t=C_T \in [0,1]$. We know from the analysis of the control problem (see \cite{ankirchner2014bsdes}, proof of Theorem 4.2) 
that
\begin{equation}\label{e:controleq}
\Ymin_t C_t^{p}=\mathbb{E} \left[\left. (p-1)^{p-1} \int_t^T |\alpha_s|^p ds+ \xi |C_T|^p\right| {\mathscr F}_t \right].
\end{equation}
The right side of \eqref{e:controleq} decomposes into a semimartingale $M+A$ with
$$M_t=\mathbb{E} \left[\left.  (p-1)^{p-1}\int_0^T |\alpha_s|^p ds+  \xi|C_T|^p \right|{\mathscr F}_t \right]$$
and
$$A_t=- (p-1)^{p-1} \int_0^t  |\alpha_s|^p ds.$$
Since $Y_0=\mathbb{E}[M_T]<\infty$, the process $M$ is a true martingale. In particular, $M$ is continuous.  $A$ is continuous by the
fundamental theorem of calculus.
Therefore, the right side of \eqref{e:controleq} converges to $M_T+A_T= \xi|C_T|^p$ as $t\nearrow T$, i.e.
$$\lim_{t\nearrow T}\Ymin_tC_t^p= \xi|C_T|^p.$$
Therefore, on the set $\{C_T>0\}$ we have $\lim_{t\nearrow T}\Ymin_t= \xi$. Moreover, the definition of $C$ implies that if $C_T(\omega)=0$, then $\lim_{t\to T}\Ymin_t(\omega)=\infty$ and consequently $\{\lim_{t\to T}\Ymin_t<\infty\}\subseteq \{C_T>0\}$. This completes the proof.
\end{proof}

\begin{remark}{\em
The identity \eqref{e:controleq} is equivalent to the representation \eqref{e:defu} in Lemma \ref{l:defu} (replace $\Ymin_t$ by $u(W_t,t)$ and 
take expectation).} 
\end{remark}

The next result gives a necessary condition for continuity. We use the shorthand notation $\mathbb P_t[\xi=\infty]=\mathbb E[\mathbf{1}_{\{\xi=\infty\}}|{\mathscr F}_t]$.
\begin{lemma}\label{l:neccond}
Let $\Ymin$ be the minimal super-solution of the BSDE {\rm (\ref{e:SDE},\ref{e:terminalcondition})} and suppose that continuity condition \eqref{e:cont_terminalcondition} holds for $\Ymin$. Then we have a.s. on $\{\xi<\infty\}$
$$\sup_{t\in[0,T]} \frac{\mathbb{P}_t[\xi=\infty]}{(T-t)^{p-1}}<\infty.$$ 
\end{lemma}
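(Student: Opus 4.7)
My plan is to use the control representation \eqref{e:control0}--\eqref{e:link_control_BSDE} to obtain the pointwise lower bound
$$\Ymin_t\ \ge\ (p-1)^{p-1}\,\frac{\mathbb P_t[\xi=\infty]}{(T-t)^{p-1}}\qquad\text{a.s., for every } t\in [0,T),$$
and then to combine this with the continuity hypothesis, which on $\{\xi<\infty\}$ forces the path of $\Ymin$ to be bounded on $[0,T]$.

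To produce the lower bound, I fix $t\in [0,T)$ and take $c=1$ in \eqref{e:control0}. Any admissible $\alpha\in\mathcal A(t,1)$ that does not satisfy $C^\alpha_T=0$ a.s.\ on $\{\xi=\infty\}$ gives $\mathbb E[|C^\alpha_T|^p\xi\,|\,\mathscr F_t]=+\infty$ by the convention $0\cdot\infty=0$, and therefore does not contribute to the essinf. Hence the essinf in \eqref{e:control0} may be restricted to $\alpha$ with $\int_t^T\alpha_s\,ds=-1$ on $\{\xi=\infty\}$. For such an $\alpha$, Hölder's inequality with exponents $p$ and $q=p/(p-1)$ yields, pointwise on $\{\xi=\infty\}$,
$$\int_t^T|\alpha_s|^p\,ds\ \ge\ \frac{\bigl|\int_t^T\alpha_s\,ds\bigr|^p}{(T-t)^{p-1}}\ =\ \frac{1}{(T-t)^{p-1}}.$$
Multiplying by $\mathbf 1_{\{\xi=\infty\}}$, taking conditional expectation on $\mathscr F_t$, and dropping the nonnegative terminal cost gives
$$\mathbb E\!\left[(p-1)^{p-1}\!\int_t^T|\alpha_s|^p\,ds+|C^\alpha_T|^p\xi\,\bigg|\,\mathscr F_t\right]\ \ge\ \frac{(p-1)^{p-1}}{(T-t)^{p-1}}\,\mathbb P_t[\xi=\infty].$$
Since the right-hand side is $\mathscr F_t$-measurable and independent of $\alpha$, the essinf on the left is also bounded below by it, and \eqref{e:link_control_BSDE} delivers the announced lower bound on $\Ymin_t$.

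For the final step I use the hypothesis. Assuming $\lim_{t\to T}\Ymin_t=\xi$ and knowing that $\Ymin$ is continuous on $[0,T)$, on the set $\{\xi<\infty\}$ the path $t\mapsto\Ymin_t$ is continuous and therefore bounded on $[0,T]$. Combining this with the lower bound gives
$$\sup_{t\in[0,T)}\frac{\mathbb P_t[\xi=\infty]}{(T-t)^{p-1}}\ \le\ (p-1)^{1-p}\sup_{t\in[0,T]}\Ymin_t\ <\ \infty$$
almost surely on $\{\xi<\infty\}$, which is the claim (the value at $t=T$ is $0$ on $\{\xi<\infty\}$ because $\mathbb P_T[\xi=\infty]=\mathbf 1_{\{\xi=\infty\}}=0$ there). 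I do not expect any serious obstacle: the only point that merits explicit justification is the reduction of the essinf to strategies satisfying $C^\alpha_T=0$ on $\{\xi=\infty\}$, which is a standard consequence of the $0\cdot\infty=0$ convention inherent in the cost functional.
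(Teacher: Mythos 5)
Your proposal is correct and follows essentially the same route as the paper's own proof: both rest on the control representation \eqref{e:link_control_BSDE}, the Jensen/H\"older bound $\int_t^T|\alpha_s|^p\,ds \ge (T-t)^{1-p}$ for paths with $C^\alpha_T=0$, the observation that finite cost forces $C^\alpha_T=0$ on $\{\xi=\infty\}$, and the continuity hypothesis to bound $\Ymin$ on $\{\xi<\infty\}$. The only cosmetic difference is that you restrict the essinf to strategies satisfying the constraint on $\{\xi=\infty\}$, while the paper works with an arbitrary finite-cost strategy and notes the inclusion $\{\xi=\infty\}\subseteq\{C^\alpha_T=0\}$; these are interchangeable.
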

\begin{proof}
For $t<T$ let $\alpha \in \mathcal A(t,1)$ be an arbitrary strategy with associated position path $C^\alpha$ that has finite costs:
$$ {\mathbb E} \left[
(p-1)^{p-1}\int_t^T |\alpha_s|^{p}ds + |C^\alpha_T|^{p} \xi \bigg| {\mathscr F}_t \right]<\infty.$$
Take for example the optimal strategy.
 Then we have
\begin{eqnarray*}
&& {\mathbb E} \left[
(p-1)^{p-1}\int_t^T |\alpha_s|^{p}ds + |C^\alpha_T|^{p} \xi \bigg| {\mathscr F}_t \right] \\
&& \qquad \ge (p-1)^{p-1} \mathbb{E}\left[\left.\mathbf{1}_{\{C^\alpha_T=0\}}\int_t^T|\alpha_s|^p ds\right|{\mathscr F}_t \right].
\end{eqnarray*}
Jensen's inequality yields $\int_t^T|\alpha_s|^p\ge \frac 1{(T-t)^{p-1}}$ for every path satisfying $C^\alpha_T=0$. Moreover, since $\alpha$ has finite costs, it holds that $\{\xi=\infty\}\subseteq{\{C^\alpha_T=0\}}$. This implies 
\[
{\mathbb E} \left[
(p-1)^{p-1}\int_t^T |\alpha_s|^{p}ds + |C^\alpha_T|^{p} \xi \bigg| {\mathscr F}_t \right]\ge (p-1)^{p-1} \frac{\mathbb{P}_t[\xi=\infty]}{(T-t)^{p-1}}. 
\]
Since the right side of the above display does not depend on the control $\alpha$, we use \eqref{e:link_control_BSDE} to arrive at
$$
\Ymin_t\ge (p-1)^{p-1} \frac{\mathbb{P}_t[\xi=\infty]}{(T-t)^{p-1}}.
$$
Since $\lim_{t\to T}\Ymin_t<\infty$ if $\xi<\infty$, this yields the claim.
\end{proof}

Lemma \ref{l:neccond} combined with our results from Sections \ref{s:first_case} and \ref{s:secondtermcond} allows to derive estimates on the speed of convergence $\lim_{t\to T}\mathbb P_t[A] \to 0$ on $A^c$ for 
$A = B(m,r)$ and $A=B(m,r)^c.$ This is subject of the next corollary. 

\begin{corollary}\label{c:limitcondprob}
Let $m\in \mathbb R$ and $r\in (0,\infty)$. Then on $B(m,r)$ it holds for all $p\in (1,2)$ that
$$\sup_{t\in[0,T]} \frac{\mathbb{P}_t[B(m,r)^c]}{(T-t)^{p-1}}<\infty$$
and on $B(m,r)^c$ it holds for all $p>1$ that
$$\sup_{t\in[0,T]} \frac{\mathbb{P}_t[B(m,r)]}{(T-t)^{p-1}}<\infty.$$
\end{corollary}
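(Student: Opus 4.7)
The plan is to deduce Corollary~\ref{c:limitcondprob} as a direct consequence of Lemma~\ref{l:neccond} applied to the two singular terminal conditions $\xi_1 = \infty\cdot\mathbf{1}_{B(m,r)^c}$ and $\xi_2 = \infty\cdot\mathbf{1}_{B(m,r)}$ that were handled in Sections~\ref{s:first_case} and \ref{s:secondtermcond}. Recall that Lemma~\ref{l:neccond} says: whenever the minimal supersolution $\Ymin$ associated with a nonnegative $\xi\in\mathscr F_T$ satisfies \eqref{e:cont_terminalcondition}, one has on $\{\xi<\infty\}$
\[
\sup_{t\in[0,T]} \frac{\mathbb{P}_t[\xi=\infty]}{(T-t)^{p-1}}<\infty.
\]
Although Theorems~\ref{thm:first_case} and \ref{thm:second_case} are stated with the normalizations $m=r=L/2$, the paragraph just after \eqref{e:newterm} remarks that the construction extends verbatim to arbitrary $m\in\mathbb{R}$ and $r>0$, so the continuity statement \eqref{e:cont_terminalcondition} is available for both $\xi_1$ and $\xi_2$ in the generality needed.

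For the first assertion, take $\xi=\xi_1$. The condition $p\in(1,2)$ is equivalent via H\"older conjugacy $1/p+1/q=1$ to $q>2$, which is precisely the hypothesis of Theorem~\ref{thm:first_case}. That theorem then gives $\lim_{t\to T}\Ymin_t=\xi_1$ almost surely. Since $\{\xi_1=\infty\}=B(m,r)^c$ and $\{\xi_1<\infty\}=B(m,r)$, Lemma~\ref{l:neccond} delivers on $B(m,r)$ the bound
\[
\sup_{t\in[0,T]} \frac{\mathbb{P}_t[B(m,r)^c]}{(T-t)^{p-1}}<\infty,
\]
which is the first claim.

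For the second assertion, take $\xi=\xi_2$. For every $p>1$, the conjugate exponent $q>1$ satisfies the hypothesis of Theorem~\ref{thm:second_case}, which provides $\lim_{t\to T}\Ymin_t=\xi_2$ almost surely. Applying Lemma~\ref{l:neccond} with this $\xi$, on $\{\xi_2<\infty\}=B(m,r)^c$ we obtain
\[
\sup_{t\in[0,T]} \frac{\mathbb{P}_t[B(m,r)]}{(T-t)^{p-1}}<\infty,
\]
which is the second claim.

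There is essentially no obstacle here beyond carefully lining up the exponent conditions: the deep work has already been done in establishing continuity of $\Ymin$ at $T$ in Theorems~\ref{thm:first_case} and \ref{thm:second_case} (which in turn relied on the PDE constructions), and Lemma~\ref{l:neccond} converts these continuity statements mechanically into the stated polynomial decay rates for the conditional probabilities. The only subtlety worth flagging is the asymmetry in the range of $p$: the restriction $p\in(1,2)$ in the first half of the corollary is inherited from the restriction $q>2$ in Theorem~\ref{thm:first_case}, which in turn arose from the need for the auxiliary dominating function $v_0$ of \eqref{e:defv0} to be finite via \eqref{e:int_trivialsol}; no such restriction is needed for $\xi_2$ because the upper bound $\bar u$ replaces $v_0$ in Section~\ref{s:secondtermcond}.
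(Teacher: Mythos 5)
Your proof is correct and follows essentially the same route as the paper: both arguments combine the continuity $\lim_{t\to T}\Ymin_t=\xi$ at $T$ (from Sections \ref{s:first_case} and \ref{s:secondtermcond}) with Lemma \ref{l:neccond}, matching the exponents $q>2\Leftrightarrow p\in(1,2)$ and $q>1\Leftrightarrow p>1$. The only cosmetic difference is that the paper re-derives the continuity of $\Ymin$ by squeezing ($\Ymin\le Y$ by minimality together with $\lim_{t\to T}\Ymin_t\ge\xi$), whereas you cite it directly from the identification $(\Ymin,\Zmin)=(Y,Z)$ in Theorems \ref{thm:first_case} and \ref{thm:second_case}, which amounts to the same thing.
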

\begin{proof}
For the first result set $\xi=\infty \cdot \mathbf{1}_{B^c(m,r)}$ and
let $(\Ymin,\Zmin)$ denote the minimal super-solution of (\ref{e:SDE},\ref{e:terminalcondition}) and let $(Y,Z)$ denote the solution of (\ref{e:SDE},\ref{e:terminalcondition}) constructed in Proposition \ref{p:BSDEsol}. In particular, it holds that $\lim_{t\to T}Y_t=\xi$. Minimality of $\Ymin$ and $\lim_{t\to T} \Ymin_t\ge \xi$ imply that $\lim_{t\to T}\Ymin_t=\xi$. The first result then follows from Lemma \ref{l:neccond} (observe that $q>2$ implies that $p<2$).
The second result follows from the same argument and Lemma \ref{l:neccond}, 
this time used with the results in Section \ref{s:secondtermcond}.
\end{proof}

\section{Conclusion}

Let us comment on several direct extensions and possible future work.
The extension of the boundary condition $\xi= \infty \cdot \mathbf{1}_{B^c}$ to
  $\infty \cdot \mathbf{1}_{B^c} + g(W_T) \mathbf{1}_{B}$ for $g$ such
that $\mathbb{E}(|g(W_T)| \mathbf{1}_B) < \infty$ requires only
that we change the terminal condition \eqref{e:boundary1} 
\begin{equation*}
V(0,t)  = V(L,t) = y_t, \ t \in [0,T],~~
V(x,T) = g(x), \ 0  < x < L.
\end{equation*}
Simple modifications of the argument of Section \ref{s:first_case} would suffice
to deal with this change.
Generalizing  the terminal condition $\xi =\infty \cdot \mathbf{1}_{B}$ to
 $\infty \cdot \mathbf{1}_{B} + g(W_T) \mathbf{1}_{B^c}$ for $g$ such that $\mathbb{E}(|g(W_T)| \mathbf{1}_B) < \infty$ requires the solution of two PDE:
one must first solve \eqref{e:PDEfirst} over the domain $ {\mathbb R} \times [0,T]$ 
where $g$ serves as terminal condition on the terminal boundary of this domain. 
The value of the solution
on $ S = \{(L,t), t \in [0,T] \} \cup \{ (0,t), t \in [0,T] \}$ will 
then serve as lateral boundary
condition for the PDE \eqref{e:PDEfirst} on $D$. 

A further generalization involves changing the definition of the set $B$ to $\{\omega -r < X_t(\omega) -c < r, t \in [0,T]\}$
where $X$ is an SDE driven by $W$;
this generalization would require to modify the second derivative term in \eqref{e:PDEfirst} to the infitesimal
generator of $X$. Further generalizations can consider the case when $X$ is an SDE with
jumps or a doubly stochastic process, which may require further arguments and ideas.
The treatment of these extensions may also be taken up in future work. 

For the case when $\xi = \xi_1 =  \mathbf{1}_{B^c}$ our arguments depended on $q > 2$, which implied 
${\mathbb E}[y_\tau \mathbf{1}_{\{\tau < T\}}] < \infty.$
The work of Marcus \& V\'eron \cite{marc:vero:01} and numerical computations suggest that even when 
$q \in (1,2]$ the PDE \eqref{e:PDEfirst} and the boundary condition \eqref{e:boundary1} have a smooth solution.  
Future work can also try to treat the terminal condition $\xi_1$ with $q \in (1,2].$

The single space dimension that we have treated in the present work
simplified our existence and smoothness arguments for the
solutions of the PDE we have studied. Their extension to higher dimensions could also be the
subject of future work. In this, a possible approach is, as hinted at in the introduction, to develop 
arguments for our PDE problems starting from results of \cite{marcus1999initial}.

Finally, from an applied perspective, we think that it would be of interest
to study the implications of the results
of the current work for the portfolio liquidation problem mentioned in 
Section \ref{s:control}.

\def\cprime{$'$} \def\cprime{$'$}
\providecommand{\bysame}{\leavevmode\hbox to3em{\hrulefill}\thinspace}
\providecommand{\MR}{\relax\ifhmode\unskip\space\fi MR }
\providecommand{\MRhref}[2]{%
  \href{http://www.ams.org/mathscinet-getitem?mr=#1}{#2}
}
\providecommand{\href}[2]{#2}

\end{document}